\numberwithin{equation}{section}
\theoremstyle{plain}
\newtheorem{thm}{\protect\theoremname}[section]
  \theoremstyle{definition}
  \newtheorem{example}[thm]{\protect\examplename}
  \theoremstyle{plain}
  \newtheorem{lem}[thm]{\protect\lemmaname}
  \theoremstyle{definition}
  \newtheorem{defn}[thm]{\protect\definitionname}
  \theoremstyle{plain}
  \newtheorem{prop}[thm]{\protect\propositionname}
  \theoremstyle{remark}
  \newtheorem{rem}[thm]{\protect\remarkname}
  \theoremstyle{plain}
  \newtheorem{cor}[thm]{\protect\corollaryname}
  \theoremstyle{plain}
  \newtheorem{question}[thm]{\protect\questionname}
\let\mathcal=\CMcal
\let\ldash=\l
\let\orgdescriptionlabel\descriptionlabel
\renewcommand*{\descriptionlabel}[1]{%
  \let\orglabel\label
  \let\label\@gobble
  \phantomsection
  \edef\@currentlabel{#1}%
  \let\label\orglabel
  \orgdescriptionlabel{#1}%
}
  \providecommand{\corollaryname}{Corollary}
  \providecommand{\definitionname}{Definition}
  \providecommand{\examplename}{Example}
  \providecommand{\lemmaname}{Lemma}
  \providecommand{\propositionname}{Proposition}
  \providecommand{\questionname}{Question}
  \providecommand{\remarkname}{Remark}
\providecommand{\theoremname}{Theorem}
\begin{document}
\global\long\def\e{\varepsilon}
\global\long\def\N{\mathbb{N}}
\global\long\def\Z{\mathbb{Z}}
\global\long\def\Q{\mathbb{Q}}
\global\long\def\R{\mathbb{R}}
\global\long\def\C{\mathbb{C}}
\global\long\def\G{\mathbb{G}}
\global\long\def\HH{\mathbb{H}}

\global\long\def\H{\EuScript H}
\global\long\def\J{\mathcal{J}}
\global\long\def\K{\EuScript K}
\global\long\def\a{\alpha}
\global\long\def\be{\beta}
\global\long\def\l{\lambda}
\global\long\def\om{\omega}
\global\long\def\z{\zeta}
\global\long\def\Aa{\mathcal{A}}

\global\long\def\Ree{\operatorname{Re}}
\global\long\def\Img{\operatorname{Im}}
\global\long\def\linspan{\operatorname{span}}
\global\long\def\slim{\operatorname*{s-lim}}
\global\long\def\clinspan{\operatorname{\overline{span}}}
\global\long\def\co{\operatorname{co}}
\global\long\def\CAP{\mathrm{CAP}}
\global\long\def\presb#1#2{\prescript{}{#1}{#2}}

\global\long\def\tensor{\otimes}
\global\long\def\tensormin{\otimes_{\mathrm{min}}}
\global\long\def\tensorn{\overline{\otimes}}

\global\long\def\A{\forall}

\global\long\def\i{\mathrm{id}}

\global\long\def\one{\mathds{1}}
\global\long\def\tr{\mathrm{tr}}
\global\long\def\Ww{\mathds{W}}
\global\long\def\wW{\text{\reflectbox{\ensuremath{\Ww}}}\:\!}
\global\long\def\op{\mathrm{op}}
\global\long\def\WW{{\mathds{V}\!\!\text{\reflectbox{\ensuremath{\mathds{V}}}}}}

\global\long\def\Linfty#1{L^{\infty}(#1)}
\global\long\def\Lone#1{L^{1}(#1)}
\global\long\def\LoneStar#1{L_{*}^{1}(#1)}
\global\long\def\Ltwo#1{L^{2}(#1)}
\global\long\def\Cz#1{C_{0}(#1)}
\global\long\def\CzU#1{C_{0}^{\mathrm{u}}(#1)}
\global\long\def\CU#1{C^{\mathrm{u}}(#1)}

\global\long\def\linfty#1{\ell^{\infty}(#1)}
\global\long\def\lone#1{\ell^{1}(#1)}
\global\long\def\ltwo#1{\ell^{2}(#1)}
\global\long\def\Pol#1{\mathrm{Pol}(#1)}
\global\long\def\Ltwozero#1{L_{0}^{2}(#1)}
\global\long\def\Irred#1{\mathrm{Irred}(#1)}

\global\long\def\Ad#1{\mathrm{Ad}(#1)}
\global\long\def\VN#1{\mathrm{VN}(#1)}
\global\long\def\d{~\mathrm{d}}
\global\long\def\t{\mathrm{t}}
\global\long\def\tp{\xymatrix{*+<.7ex>[o][F-]{\scriptstyle \top}}
 }
\global\long\def\tpsmall{\xymatrix{*+<.5ex>[o][F-]{\scriptscriptstyle \top}}
 }

\title{Weak mixing for locally compact quantum groups}

\author{Ami Viselter}

\address{Department of Mathematics, University of Haifa, 31905 Haifa, Israel}

\email{aviselter@staff.haifa.ac.il}

\urladdr{http://math.haifa.ac.il/viselter}

\subjclass[2010]{Primary: 20G42, Secondary: 22D25, 37A15, 37A25, 46L89}
\begin{abstract}
We generalize the notion of weakly mixing unitary representations
to locally compact quantum groups, introducing suitable extensions
of all standard characterizations of weak mixing to this setting.
These results are used to complement the noncommutative Jacobs--de
Leeuw--Glicksberg splitting theorem of Runde and the author {[}``Ergodic
theory for quantum semigroups'', J.~Lond.~Math.~Soc.~(2) 89 (2014)
941--959{]}. Furthermore, a relation between mixing and weak mixing
of state-preserving actions of discrete quantum groups and the properties
of certain inclusions of von Neumann algebras, which is known for
discrete groups, is demonstrated.
\end{abstract}

\maketitle

\section*{Introduction}

Weak mixing is an intrinsic part of ergodic theory. It was introduced
by Koopman and von Neumann in a specific setting \citep{Koopman_von_Neumann},
and then extended to amenable topological semigroups by Dye \citep{Dye__erg_mix_thm}
and to locally compact groups by Bergelson and Rosenblatt \citep{Bergelson_Rosenblatt__mixing}.
Lying between ergodicity and (strong) mixing, this notion comes in
various shapes and has an abundance of applications, an acclaimed
one being Furstenberg's proof of Szemer\'edi's theorem on arithmetic
progressions \citep{Furstenberg__erg_behav}. In operator algebras
it has noticeably played a central role in Popa's deformation/rigidity
theory, and in the study of singular masas in $\mathrm{II}_{1}$-factors
using the weak asymptotic homomorphism property and similar conditions;
see \citep{Popa__some_rig_Bernoulli,Popa__factors_Betti_num,Popa__strong_rig_malleable_I,Popa__strong_rig_malleable_II,Popa__cocycle_orbit_eq_superrig,Popa_Vaes__strong_rig_gen_Ber},
\citep{Sinclair_Smith__strong_singular,Robertson_Sinclair_Smith__strong_sing,Sinclair_Smith_White_Wiggins__strong_singular_MASAs,Jolissaint_Stalder__str_singular_MASAs,Cameron_Fang__Mukherjee__mix_subalg}
and their numerous sequels.

A related result is the famous Jacobs--de Leeuw--Glicksberg splitting
theorem \citep{Jacobs_ergodic_thy_1956,deLeeuw_Glicksberg__appl_almst_per_com}.
Considering a weakly almost periodic semigroup $\mathcal{S}$ of operators
on a Banach space, the notions of \emph{almost periodicity} and of
\emph{weak mixing} of vectors with respect to $\mathcal{S}$ are defined.
The theorem says, roughly, that under a suitable amenability condition,
$\mathcal{S}$ induces a splitting of the Banach space that is acted
on as the direct sum of the subspace of almost periodic\emph{ }vectors
and the subspace of weakly mixing vectors. An important example arises
from a dynamical system consisting of a topological semigroup acting
on a probability space by measure-preserving transformations. The
splitting theorem then applies to the Koopman representation of the
dynamical system.

This leads naturally to the question of finding noncommutative generalizations
of these results. Niculescu, Str\"{o}h and Zsid\'{o} \citep{Niculescu_Stroh_Zsido__noncmt_recur}
considered dynamical systems in which the object that was \emph{acted}
on was noncommutative, consisting of a von Neumann algebra $N$ and
an endomorphism of $N$ preserving a faithful normal state $\theta$;
this generalizes the classical setting of a measure-preserving map
on a probability space. A notion of almost periodic \emph{operators}
in $N$ with respect to the action was introduced, and it was proved
that the set of these operators is a von Neumann subalgebra $N^{\mathrm{AP}}$
of $N$. The Jacobs--de Leeuw--Glicksberg splitting has the form of
a $\theta$-preserving conditional expectation $E^{\mathrm{AP}}$
from $N$ onto $N^{\mathrm{AP}}$ \citep[Theorem 4.2]{Niculescu_Stroh_Zsido__noncmt_recur}.
The second direct summand, namely $\ker E^{\mathrm{AP}}$, has a weak
mixing property described in \citep[Proposition 5.5]{Niculescu_Stroh_Zsido__noncmt_recur}.
A generalization of these results to actions of more general (amenable)
topological semigroups $G$ was announced in \citep{Zsido__splitting_noncomm_dyn_sys}
(the above case is that of $G=\Z_{+}$). Subsequent works that address
weak mixing of actions of more general groups on von Neumann algebras
include Beyers, Duvenhage and Str{\"o}h \citep{Beyers_Duvenhage_Stroh__Szemeredi}
and Duvenhage \citep{Duvenhage__Bergelson_thm,Duvenhage__ergodicity_and_mixing,Duvenhage__relatively_ind_join};
see also Duvenhage and Mukhamedov \citep{Duvenhage_Mukhamedov__rel_erg_prop}.

Runde and the author undertook in \citep{Runde_Viselter_LCQGs_Ergodic_Thy}
to generalize the results of Niculescu, Str\"{o}h and Zsid\'{o}
to dynamical systems that were ``fully noncommutative'' in the sense
that the \emph{acting} object and the object that was \emph{acted}
on were both noncommutative. This means that a quantum semigroup
(a Hopf--von Neumann algebra) $\G$ acts on a von Neumann algebra
$N$ via an action $\a$ preserving a faithful normal state $\theta$.
We introduced the notions of completely almost periodic vectors and
operators with respect to $\a$. When $\G$ is amenable and co-amenable,
mild assumptions yield a Jacobs--de Leeuw--Glicksberg splitting theorem
as above: there exists a $\theta$-preserving conditional expectation
$E^{\CAP}$ from $N$ onto the set (indeed, von Neumann algebra) $N^{\CAP}$
of completely almost periodic operators \citep[Corollary 4.10]{Runde_Viselter_LCQGs_Ergodic_Thy}.
These mild assumptions are readily fulfilled when $\G$ is a locally
compact quantum group (LCQG) in the sense of Kustermans and Vaes \citep{Kustermans_Vaes__LCQG_C_star,Kustermans_Vaes__LCQG_von_Neumann}.
Nevertheless, while the completely almost periodic part was fully
taken care of, it was not clear at the time in what sense $\ker E^{\CAP}$
is ``weakly mixing''.

The purpose of the present paper is to develop a noncommutative theory
of weak mixing for unitary co-representations that, in particular,
will provide an answer to the question of the weak mixing nature of
$\ker E^{\CAP}$ from the previous paragraph. The proof for the case
$\G=\Z_{+}$ in \citep[Proposition 5.5]{Niculescu_Stroh_Zsido__noncmt_recur}
relies chiefly on the classical Koopman--von Neumann spectral mixing
theorem, characterizing the weakly mixing vectors with respect to
a contraction in a Hilbert space \citep[Theorem 2.3.4]{Krengel__book}.
It is tailored for a single contraction, i.e.~an action of $\Z_{+}$,
and does not extend to more general semigroups, let alone quantum
semigroups. Indeed, the general framework for weak mixing in ergodic
theory deals with groups. The setting of our main results will therefore
be that of LCQGs $\G$. It is neither more nor less restrictive than
that of \citep{Runde_Viselter_LCQGs_Ergodic_Thy} for we do not require
$\G$ to be co-amenable, and also amenability is required only for
some implications. \emph{Mixing} of unitary co-representations of
LCQGs was recently introduced by Daws, Fima, Skalski and White \citep{Daws_Fima_Skalski_White_Haagerup_LCQG}
and was successfully used to define the Haagerup property in this
setting.

Let us recall the main characterizations of weakly mixing group representations
and actions. We recommend the survey of Bergelson and Gorodnik \citep{Bergelson_Gorodnik__survey}
and the lecture notes of Austin \citep{Austin__erg_thy_lect_notes}
and Peterson \citep{Peterson__erg_thy_lect_notes}. Let $\pi$ be
a unitary representation of a locally compact group $G$ on a Hilbert
space $\H$. The following conditions are equivalent:
\begin{description}
\item [{(WM1)\label{desc:classical_weak_mixing_1}}] for every $\z_{1},\ldots,\z_{n}\in\H$
and $\e>0$ there is some $\gamma\in G$ such that $\left|\left\langle \pi(\gamma)\z_{i},\z_{j}\right\rangle \right|<\e$
for every $1\leq i,j\leq n$;
\item [{(WM2)}] the tensor product $\pi\tensor\overline{\pi}$, of $\pi$
and its conjugate representation $\overline{\pi}$, is ergodic;
\item [{(WM3)\label{desc:classical_weak_mixing_3}}] ``continuity of the
spectrum'' / triviality of the Kronecker factor: $\pi$ has no finite-dimensional
sub-representations.
\end{description}
In fact, if $\pi$ is weakly mixing, then its tensor product by any
other unitary representation of $G$ is also ergodic. Under additional
hypotheses, e.g.~amenability of $G$, formally stronger conditions
are equivalent to weak mixing.

An important, intuitive special case comes from (classical) dynamical
systems $(X,\mathbb{A},\mu,G,T)$ consisting of a probability space
$(X,\mathbb{A},\mu)$, a locally compact group $G$ and a measure-preserving
action $T=\left(T_{\gamma}\right)_{\gamma\in G}$ of $G$ on $X$.
The \emph{Koopman representation} of the system is the unitary representation
$\pi$ of $G$ on $\Ltwo{X,\mathbb{A},\mu}$ given by $\pi(\gamma)f:=f\circ T_{\gamma^{-1}}$
($\gamma\in G$, $f\in\Ltwo{X,\mathbb{A},\mu}$). Let $\Ltwozero{X,\mathbb{A},\mu}:=\left\{ f\in\Ltwo{X,\mathbb{A},\mu}:\int_{X}f\d\mu=0\right\} $.
We say that the system is weakly mixing if the restriction of the
Koopman representation to $\Ltwozero{X,\mathbb{A},\mu}$ satisfies
one, hence all, of conditions \prettyref{desc:classical_weak_mixing_1}--\prettyref{desc:classical_weak_mixing_3}
above. The first two take the following simpler forms:
\begin{enumerate}
\item for every $A_{1},\ldots,A_{n}\in\mathbb{A}$ and $\e>0$ there is
some $\gamma\in G$ such that\linebreak{}
 $\left|\mu(A_{i}\cap T_{\gamma}A_{j})-\mu(A_{i})\mu(A_{j})\right|<\e$
for every $1\leq i,j\leq n$;
\item the product system\emph{ }$(X\times X,\mathbb{A}\times\mathbb{A},\mu\times\mu,G,T\times T)$,
where $T\times T:=\left(T_{\gamma}\times T_{\gamma}\right)_{\gamma\in G}$,
is ergodic.
\end{enumerate}

The paper is structured as follows. In \prettyref{sec:equivalent_conditions}
we introduce the notion of weakly mixing unitary co-representations
of LCQGs. All classical characterizations of this property (see above)
are generalized in the paper's main result, \prettyref{thm:wm_conditions}.
It is subsequently used to complete the von Neumann algebraic Jacobs--de
Leeuw--Glicksberg splitting theorem of \citep{Runde_Viselter_LCQGs_Ergodic_Thy}
in \prettyref{sub:Jacobs_de-Leeuw_Glicksberg}. In \prettyref{sub:mixing_in_crossed_products}
we study the mixing and weak mixing properties of inclusions of von
Neumann algebras arising from a discrete quantum group acting on a
von Neumann algebra, generalizing part of results of Jolissaint and
Stalder \citep{Jolissaint_Stalder__str_singular_MASAs} and Cameron,
Fang and Mukherjee \citep{Cameron_Fang__Mukherjee__mix_subalg}. \prettyref{sec:open_questions}
ends the paper with several open questions.

\section{Preliminaries}

Let us begin with a few conventions. All Hilbert spaces in this paper
are complex. For $\z,\eta$ in a Hilbert space $\H$, we let $\om_{\z,\eta}$
stand for the functional in $B(\H)_{*}$ given by $x\mapsto\left\langle x\z,\eta\right\rangle $,
and let $\om_{\z}:=\om_{\z,\z}$. By $\z^{*}$ we mean the functional
$\left\langle \cdot,\z\right\rangle \in\H^{*}$. We write $\one$
for the unit of a $C^{*}$-algebra (if exists), and $\i$ for the
identity map over $C^{*}$-algebras. The left and the right absolute
values of an element $x$ of a $C^{*}$-algebra are $\left|x\right|:=(x^{*}x)^{1/2}$
and $\left|x\right|_{\mathrm{r}}:=(xx^{*})^{1/2}$, respectively.
Representations of $C^{*}$-algebras are assumed to be nondegenerate.
The flip map $a\tensor b\mapsto b\tensor a$ at the algebra level
is denoted by $\sigma$. The symbols $\tensor,\tensormin$ and $\tensorn$
are used for the Hilbert space, minimal (spatial) $C^{*}$-algebraic,
and normal spatial tensor products, respectively.

Let $N$ be a von Neumann algebra. For a weight $\theta$ on $N$,
we denote by $\left(\Ltwo{N,\theta},\Lambda_{\theta}\right)$ the
associated GNS construction. When $\theta$ is normal, semi-finite
and faithful (n.s.f.), we denote by $\nabla_{\theta}$ and $J_{\theta}$
the modular operator and modular conjugation of $\theta$, respectively,
both acting on $\Ltwo{N,\theta}$, and by $\sigma^{\theta}$ the modular
automorphism group of $\theta$ \citep{Stratila__mod_thy,Takesaki__book_vol_2}. 

\emph{Locally compact quantum groups} (LCQGs) are a far-reaching generalization
of locally compact groups. Their axiomatization, which is the product
of a long list of works that go back to the seventies, was introduced
by Kustermans and Vaes \citep{Kustermans_Vaes__LCQG_C_star,Kustermans_Vaes__LCQG_von_Neumann},
and an equivalent one by Masuda, Nakagami and Woronowicz \citep{Masuda_Nakagami_Woronowicz}.
A LCQG is a pair $\G=\left(\Linfty{\G},\Delta\right)$ satisfying
the following conditions:
\begin{enumerate}
\item $\Linfty{\G}$ is a von Neumann algebra;
\item $\Delta$ is a co-multiplication, namely a unital normal $*$-homomorphism
$\Delta:\Linfty{\G}\to\Linfty{\G}\tensorn\Linfty{\G}$ that is co-associative
in the sense that $(\Delta\tensor\i)\Delta=(\i\tensor\Delta)\Delta$;
\item There exist n.s.f.~weights $\varphi,\psi$ on $\Linfty{\G}$, called
the left and right Haar weights, respectively, that satisfy 
\[
\begin{gathered}\varphi((\om\tensor\i)\Delta(x))=\varphi(x)\om(\one)\quad\text{for all }\om\in\Linfty{\G}_{*}^{+},x\in\Linfty{\G}^{+}\text{ with }\varphi(x)<\infty,\\
\psi((\i\tensor\om)\Delta(x))=\psi(x)\om(\one)\quad\text{for all }\om\in\Linfty{\G}_{*}^{+},x\in\Linfty{\G}^{+}\text{ with }\psi(x)<\infty.
\end{gathered}
\]

\end{enumerate}
We set $\nabla:=\nabla_{\varphi}$ and $J:=J_{\varphi}$. The predual
$\Linfty{\G}_{*}$ of $\Linfty{\G}$ is denoted by $\Lone{\G}$. It
is a Banach algebra when equipped with the convolution product $\om_{1}*\om_{2}:=(\om_{1}\tensor\om_{2})\Delta$,
$\om_{1},\om_{2}\in\Lone{\G}$.

The \emph{dual} of $\G$ is a LCQG denoted by $\hat{\G}=(\Linfty{\hat{\G}},\hat{\Delta})$.
We will not elaborate on the precise construction of $\hat{\G}$,
but give only a few details we shall need. The objects associated
with the dual will be denoted by adding a hat to the relevant notation,
e.g. $\hat{\varphi},\hat{\nabla},\hat{J}$. The Hilbert spaces $\Ltwo{\Linfty{\G},\varphi}$
and $\Ltwo{\Linfty{\hat{\G}},\hat{\varphi}}$ are canonically isometrically
isomorphic, allowing us to view both $\Linfty{\G}$ and $\Linfty{\hat{\G}}$
as acting standardly on the same Hilbert space $\Ltwo{\G}$ \citep{Haagerup__standard_form}.

The \emph{left regular co-representation} of $\G$ is a unitary $W\in\Linfty{\G}\tensorn\Linfty{\hat{\G}}$
satisfying $\Delta(x)=W^{*}(\one\tensor x)W$ for every $x\in\Linfty{\G}$
and $(\Delta\tensor\i)(W)=W_{13}W_{23}$, where the subscript numbers
are the customary leg numbering. Its dual object is just $\hat{W}=\sigma(W^{*})$.
The space $\Cz{\G}:=\clinspan^{\left\Vert \cdot\right\Vert }\{(\i\tensor\om)(W):\om\in\Lone{\hat{\G}}\}$
is a weakly dense $C^{*}$-subalgebra of $\Linfty{\G}$. It satisfies
$\Delta(\Cz{\G})\subseteq M(\Cz{\G}\tensormin\Cz{\G})$, where $M$
stands for the multiplier algebra. We have $W\in M(\Cz{\G}\tensormin\Cz{\hat{\G}})$.

The \emph{antipode} of $\G$ is a $*$-ultrastrongly closed, densely
defined operator $S$ over $\Linfty{\G}$ with the property \prettyref{eq:antipode_rep}
below. It admits a polar decomposition $S=R\circ\tau_{-i/2}$, where
the \emph{unitary antipode} $R$ is an anti-automorphism of $\Linfty{\G}$
and the \emph{scaling group} $\tau=\left(\tau_{t}\right)_{t\in\R}$
is a group of automorphisms of $\Linfty{\G}$, and by $\tau_{-i/2}$
we mean the analytic generator of $\tau$ at the point $-i/2$ \citep{Cioranescu_Zsido__analytic_gen,Zsido__spectral_erg_prop}.
We have the useful formulas 
\begin{equation}
\sigma(R\tensor R)\Delta=\Delta R\label{eq:R_Delta}
\end{equation}
and $R(x)=\hat{J}x^{*}\hat{J}$, $\tau_{t}(x)=\hat{\nabla}^{it}x\hat{\nabla}^{-it}$
for every $x\in\Linfty{\G}$, $t\in\R$.

We mention several types of LCQGs. First, \emph{compact} quantum groups
were introduced by Woronowicz \citep{Woronowicz__symetries_quantiques}
(see also Maes and Van Daele \citep{Maes_van_Daele__notes_CQGs}).
We say that $\G$ is compact if $\Cz{\G}$ is unital. This is equivalent
to either of the Haar weights being finite. In this case, the Haar
weights are, in fact, equal after being normalized to states, and
the common value is called the \emph{Haar state} of $\G$. We write
$C(\G)$ for $\Cz{\G}$ and $\Irred{\G}$ for the set of equivalence
classes of irreducible unitary co-representations of $\G$ (see below).
Second, \emph{discrete} quantum groups were introduced by Effros and
Ruan \citep{Effros_Ruan__discrete_QGs} and by Van Daele \citep{Van_Daele__discrete_CQs}.
We say that $\G$ is discrete if $\hat{\G}$ is compact. In this case,
we write $c_{0}(\G),\linfty{\G},\lone{\G}$ for $\Cz{\G},\Linfty{\G},\Lone{\G}$,
respectively. We have $\linfty{\G}=\ell^{\infty}-\bigoplus_{\gamma\in\Irred{\hat{\G}}}M_{N(\gamma)}$
and $c_{0}(\G)=c_{0}-\bigoplus_{\gamma\in\Irred{\hat{\G}}}M_{N(\gamma)}$,
where $N(\gamma)$ is the dimension of $\gamma$. There is a distinguished
central minimal projection $p\in c_{0}(\G)$ such that $\Delta(a)(\one\tensor p)=a\tensor p$
for all $a\in\linfty{\G}$. See Runde \citep{Runde__charac_compact_discr_QG}
for more information and proofs.

\emph{Kac algebras} \citep{Enock_Schwartz__book} were introduced
by Enock and Schwartz and by Kac and Vainerman. These are precisely
the LCQGs with a trivial scaling group ($\tau=\i$) that satisfy $\sigma^{\varphi}=\sigma^{\psi}$.
\begin{example}
Every locally compact group $G$ induces two LCQGs. The first, which
is identified with $G$, has $\Cz G,\Linfty G$ as its underlying
$C^{*}$- and von Neumann algebras, and $(\Delta(f))(t,s):=f(ts)$
for $f\in\Linfty G$ and $t,s\in G$. The Haar weights are given by
integration against the Haar measures. Its dual LCQG $\hat{G}$ has
$\Linfty{\hat{G}}=\VN G$, the left von Neumann algebra of $G$ generated
by the left translation operators $\left\{ \lambda(g):g\in G\right\} $
over $\Ltwo G$, $\Cz{\hat{G}}=C_{\mathrm{r}}^{*}(G)$, and $\hat{\Delta}$
is determined by satisfying $\hat{\Delta}(\lambda(g))=\lambda(g)\tensor\lambda(g)$
for each $g\in G$. The left and right Haar weights of $\hat{G}$
are both equal to the Plancherel weight on $\VN G$ \citep[Section VII.3]{Takesaki__book_vol_2}.
If $G$ is abelian, the quantum duality between $G$ and $\hat{G}$
reduces, up to unitary equivalence, to the Pontryagin duality.
\end{example}
A \emph{co-representation} of $\G$ on a Hilbert space $\H$ is an
operator $U\in B(\H)\tensorn\Linfty{\G}$ such that $(\i\tensor\Delta)(U)=U_{12}U_{13}$.
A closed subspace $\H_{0}\subseteq\H$ is called \emph{invariant}
under $U$ if the projection $p_{\H_{0}}$ of $\H$ onto $\H_{0}$
satisfies $U(p_{\H_{0}}\tensor\one)=(p_{\H_{0}}\tensor\one)U(p_{\H_{0}}\tensor\one)$.
The operator $U_{0}:=U(p_{\H_{0}}\tensor\one)\in B(\H_{0})\tensorn\Linfty{\G}$
is then a co-representation of $\G$ on $\H_{0}$. We say that $U_{0}$
is a \emph{sub-representation} of $U$, or that $U$ \emph{contains}
$U_{0}$, and write $U_{0}\leq U$. A vector $\z\in\H$ is said to
be \emph{invariant} under $U$ if the projection $p_{\z}$ of $\H$
onto $\C\z$ satisfies $U(p_{\z}\tensor\one)=p_{\z}\tensor\one$.
We say that $U$ is \emph{ergodic} if it has no nonzero invariant
vector. If $U,V$ are co-representations of $\G$ on Hilbert spaces
$\H,\K$, respectively, their \emph{tensor product} is the co-representation
$U\tp V:=U_{13}V_{23}$ of $\G$ on $\H\tensor\K$. 

\emph{Unitary} co-representations $U$ of $\G$ on $\H$ have additional
useful features. They satisfy $U\in M(\mathbb{K}(\H)\tensormin\Cz{\G})$,
where $\mathbb{K}(\H)$ is the $C^{*}$-algebra of all compact operators
over $\H$. Moreover, 
\begin{equation}
(\i\tensor S)(U)=U^{*},\label{eq:antipode_rep}
\end{equation}
that is, for every $\om\in B(\H)_{*}$ we have $(\om\tensor\i)(U)\in D(S)$
and $S((\om\tensor\i)(U))=(\om\tensor\i)(U^{*})$. As $S=R\circ\tau_{-i/2}$,
this means that formally $(\i\tensor\tau_{-i/2})(U)=(\i\tensor R)(U^{*})$,
which should be understood similarly. If a closed subspace $\H_{0}$
is invariant under $U$, then in the above notation, $U$ commutes
with $p_{\H_{0}}\tensor\one$, for instance by the next paragraph.
The associated sub-representation of $\G$ on $\H_{0}$ is therefore
unitary.

There is also the \emph{universal} face of $\G$ \citep{Kustermans__LCQG_universal}.
It consists of a $C^{*}$-algebra $\CzU{\G}$, a canonical surjective
$*$-homomorphism $\pi:\CzU{\G}\to\Cz{\G}$ and a co-multiplication
$\Delta^{\mathrm{u}}:\CzU{\G}\to M(\CzU{\G}\tensormin\CzU{\G})$ such
that $(\pi\tensor\pi)\Delta^{\mathrm{u}}=\Delta\pi$. Doing this construction
also for $\hat{\G}$, the left regular co-representation $W$ lifts
to a universal left regular co-representation, which is a unitary
$\wW\in M(\Cz{\G}\tensormin\CzU{\hat{\G}})$ with $(\i\tensor\hat{\pi})(\wW)=W$.
The universality is expressed by the following fact: for every unitary
co-representation $U$ of $\G$ on a Hilbert space $\H$, there exists
a representation $\rho$ of $\CzU{\hat{\G}}$ on $\H$ such that $U=\sigma(\i\tensor\rho)(\wW)$.
(We require the burdensome flip since we are working with the left
regular co-representation, but with right unitary co-representations.)
In particular, the character $\epsilon$ of $\CzU{\G}$ corresponding
to the trivial representation $\one$ of $\hat{\G}$ is called the
\emph{co-unit} of $\G$ and satisfies $(\epsilon\tensor\i)\Delta^{\mathrm{u}}=\i$.
There are also universal versions $R^{\mathrm{u}},\tau^{\mathrm{u}}$
of $R,\tau$.

A \emph{left-invariant mean} for $\G$ is a state $m\in\Linfty{\G}^{*}$
satisfying $m(\om\tensor\i)\Delta=\om(\one)m$ for every $\om\in\Lone{\G}$.
Right-invariant and two-sided invariant means are defined similarly.
We say that $\G$ is \emph{amenable} if it possesses a left-invariant
mean. This is equivalent to $\G$ admitting a right-invariant, or
a two-sided invariant, mean. We say that $\G$ is \emph{co-amenable}
if the canonical surjective $*$-homomorphism $\pi:\CzU{\G}\to\Cz{\G}$
is injective. In this case, we identify $\CzU{\G}$ with $\Cz{\G}$.
This is equivalent to $\Lone{\G}$ having a bounded left or right
or two-sided approximate identity, so discrete quantum groups are
trivially co-amenable. For other equivalent conditions for amenability
and co-amenability, see B\'edos and Tuset \citep{Bedos_Tuset_2003}.

A unitary co-representation $U$ of $\G$ on $\H$ is called \emph{mixing}
\citep[Definition 4.1]{Daws_Fima_Skalski_White_Haagerup_LCQG} if
for every $\om\in B(\H)_{*}$, $(\om\tensor\i)(U)\in\Cz{\G}$. The
left regular co-representation of $\G$ is mixing by definition. Additionally,
all unitary co-representations of a compact quantum group are mixing.

A (right) \emph{action} of $\G$ on a von Neumann algebra $N$ is
a unital normal $*$-homomorphism $\a:N\to N\tensorn\Linfty{\G}$
such that 
\[
(\a\tensor\i)\a=(\i\tensor\Delta)\a.
\]
For $\theta\in\Lone{\G}$, we say that $\a$ \emph{preserves} $\theta$
if $(\theta\tensor\i)\a=\theta(\cdot)\one$. If $\G$ co-amenable,
we adopt the convention of requiring that some bounded left approximate
identity $\left(\epsilon_{\lambda}\right)$ of the Banach algebra
$\Lone{\G}$ would satisfy 
\begin{equation}
(\i\tensor\epsilon_{\lambda})\a(a)\to a\text{ weakly}\qquad(\forall a\in N).\label{eq:action_co_amen_LCQG}
\end{equation}
If $\G$ is discrete, this is the same as asking that, with $p\in c_{0}(\G)$
being the distinguished central minimal projection, 
\begin{equation}
\a(a)(\one\tensor p)=a\tensor p\qquad(\forall a\in N).\label{eq:action_discrete_QG}
\end{equation}

\section{\label{sec:equivalent_conditions}Conditions for weak mixing}

In this section we introduce weak mixing of unitary co-representations
of LCQGs, generalizing conditions \prettyref{desc:classical_weak_mixing_1}--\prettyref{desc:classical_weak_mixing_3}
from the Introduction. There are two versions: weak mixing (\prettyref{def:weak_mixing})
and strict weak mixing (\prettyref{def:strict_weak_mixing}), which
coincide, e.g., for Kac algebras and for discrete quantum groups.
The first is the natural one for LCQGs. We establish the equivalence
of all conditions except for one implication, for which amenability
is assumed. The reason for that is the possible lack of certain invariant
means, as explored in depth by Das and Daws \citep{Das_Daws__quantum_Eberlein}.
The second version is not as natural, and indeed, less is known about
it without assuming amenability. The ``price'' that one has to pay
when working with the former version is that most of its characterizations
have two parts, and some involve unbounded co-representations.

\subsection{\label{sub:prelim_CAP}Preliminaries on complete almost periodicity}

We need several facts about notions discussed in \citep{Runde_Viselter_LCQGs_Ergodic_Thy}.
See also So{\ldash}tan \citep{Soltan__quantum_Bohr_comp} and Daws
\citep{Daws__remarks_quantum_Bohr_comp}.
\begin{lem}[{cf.~\citep[Proposition 4.11]{Soltan__quantum_Bohr_comp}}]
\label{lem:compactification_antipode}Let $\G$ be a LCQG. Suppose
that $C(\HH)$ is a unital $C^{*}$-subalgebra of $\Linfty{\G}$ such
that $\Delta(C(\HH))\subseteq C(\HH)\tensormin C(\HH)$ and $\HH:=(C(\HH),\Delta|_{C(\HH)})$
is a $C^{*}$-algebraic compact quantum group in the sense of \citep{Woronowicz__symetries_quantiques,Maes_van_Daele__notes_CQGs}.
Then the antipode, unitary antipode and scaling group $S^{\HH},R^{\HH},\tau^{\HH}$
of $\HH$ are the restrictions of $S^{\G},R^{\G},\tau^{\G}$ to the
canonical dense Hopf $*$-algebra $\mathcal{A}$ of $\HH$. In particular,
$S^{\HH}$ is (norm) closable, $\tau^{\HH}$ extends to an automorphism
group of $C(\HH)$ and $R^{\HH}$ extends to an anti-automorphism
of $C(\HH)$.
\end{lem}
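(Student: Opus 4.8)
The plan is to exploit the uniqueness of the Haar state and the abstract characterization of the antipode, unitary antipode and scaling group on a compact quantum group via the Hopf $*$-algebra. First I would recall that for the $C^{*}$-algebraic compact quantum group $\HH=(C(\HH),\Delta|_{C(\HH)})$, the dense Hopf $*$-algebra $\mathcal{A}$ is the linear span of the matrix coefficients of all (finite-dimensional) irreducible unitary co-representations of $\HH$; on $\mathcal{A}$ the co-inverse $S^{\HH}$ is a genuine (unbounded, generally not bounded) algebra anti-homomorphism, $R^{\HH}$ is the $*$-algebra anti-automorphism of $\mathcal{A}$ determined by $R^{\HH}=*\circ S^{\HH}\circ*$ composed appropriately (equivalently the unique one making the relevant co-representations unitary), and $\tau^{\HH}$ is the one-parameter group on $\mathcal{A}$ built from the modular data of the Haar state, so that $S^{\HH}=R^{\HH}\circ\tau^{\HH}_{-i/2}$ holds already on $\mathcal{A}$. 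These are the Woronowicz structure maps, and they are uniquely determined by the algebraic axioms (e.g.\ $m(S^{\HH}\otimes\i)\Delta=\epsilon(\cdot)\one=m(\i\otimes S^{\HH})\Delta$ on $\mathcal{A}$, where $m$ is multiplication and $\epsilon$ the co-unit).

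The key step is then to show that the analogous structure maps of the ambient LCQG $\G$ restrict to these. Since $\Delta(C(\HH))\subseteq C(\HH)\tensormin C(\HH)$ and $C(\HH)$ is unital, every unitary co-representation of $\HH$ is, via the inclusion $C(\HH)\hookrightarrow\Linfty{\G}$, a finite-dimensional unitary co-representation of $\G$; hence by \eqref{eq:antipode_rep}, applied to $\G$, its matrix coefficients lie in $D(S^{\G})$ and $S^{\G}$ sends the coefficient matrix $U=(u_{ij})$ of such a co-representation to $U^{*}=(u_{ji}^{*})$. But the same formula describes $S^{\HH}$ on these coefficients. By linearity this gives $S^{\G}|_{\mathcal{A}}=S^{\HH}$. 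For $R^{\HH}$ and $\tau^{\HH}$ I would then invoke the uniqueness of the polar decomposition $S=R\circ\tau_{-i/2}$ of a closed densely defined operator into an anti-automorphism and (the analytic generator of) an automorphism group: since $S^{\G}|_{\mathcal{A}}$ agrees with $S^{\HH}$, and $\mathcal{A}$ is a core for $S^{\HH}$, the polar decompositions must agree on $\mathcal{A}$, so $R^{\G}|_{\mathcal{A}}=R^{\HH}$ and $\tau^{\G}_{t}|_{\mathcal{A}}=\tau^{\HH}_{t}$ for all $t$. Concretely, one can also argue directly using $R^{\G}(x)=\hat J x^{*}\hat J$ and $\tau^{\G}_{t}(x)=\hat\nabla^{it}x\hat\nabla^{-it}$ together with the corresponding formulas for $\HH$ built from its own Haar state; but the uniqueness route is cleaner. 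The final sentence of the statement is then automatic: $S^{\HH}=S^{\G}|_{\mathcal{A}}$ is the restriction of a norm-closable (indeed $\sigma$-strongly closed) operator, hence closable; $\tau^{\HH}_{t}=\tau^{\G}_{t}|_{C(\HH)}$ makes sense and is an automorphism of $C(\HH)$ because $\tau^{\G}_{t}(C(\HH))=C(\HH)$ (it preserves $\mathcal{A}$ and is norm-continuous, $C(\HH)=\overline{\mathcal{A}}$); likewise $R^{\G}(C(\HH))=C(\HH)$, so $R^{\HH}$ extends to an anti-automorphism of $C(\HH)$.

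The main obstacle I anticipate is the bookkeeping needed to justify that $\mathcal{A}$ is a core for $S^{\G}$ restricted appropriately, i.e.\ that agreement on $\mathcal{A}$ really forces agreement of the polar decompositions. One must be slightly careful: $S^{\G}|_{\mathcal{A}}=S^{\HH}$ only tells us the two closed operators have a common core, not a priori that the \emph{polar parts} $R,\tau$ coincide globally; what we actually need is only that they coincide \emph{on $\mathcal{A}$}, which does follow since $\tau^{\HH}_{-i/2}$ and $\tau^{\G}_{-i/2}$ agree on the $\tau$-analytic (in fact entire) vectors in $\mathcal{A}$ and $R$ is bounded. A second minor point is checking invariance $\tau^{\G}_{t}(\mathcal{A})\subseteq\mathcal{A}$ and $R^{\G}(\mathcal{A})\subseteq\mathcal{A}$, which holds because $\mathcal{A}$ is spanned by coefficients of finite-dimensional unitary co-representations and these structure maps permute such co-representations (sending $U$ to its contragredient/conjugate), a fact internal to the compact quantum group $\HH$. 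Modulo these routine verifications, the proof is short, the real content being the single identity $S^{\G}(u_{ij})=u_{ji}^{*}$ coming from \eqref{eq:antipode_rep}.
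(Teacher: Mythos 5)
Your first step --- that every unitary co-representation of $\HH$ is one of $\G$, so that \prettyref{eq:antipode_rep} forces $S^{\G}$ and $S^{\HH}$ to agree on the matrix coefficients spanning $\mathcal{A}$, whence $S^{\HH}\subseteq S^{\G}$ and $S^{\HH}$ is closable --- is exactly the paper's argument. The gap is in how you pass from this to $R^{\G}|_{\mathcal{A}}=R^{\HH}$ and $\tau^{\G}_{t}|_{\mathcal{A}}=\tau^{\HH}_{t}$. Uniqueness of the polar decomposition $S=R\circ\tau_{-i/2}$ is a statement about a single closed operator; knowing that two closed operators share the core $\mathcal{A}$ of the smaller one (here $S^{\HH}\subseteq S^{\G}$, a proper inclusion in general, since $\mathcal{A}$ is certainly not a core for $S^{\G}$) does not imply that their polar parts agree on that core. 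Your own patch --- ``$\tau^{\HH}_{-i/2}$ and $\tau^{\G}_{-i/2}$ agree on the analytic vectors in $\mathcal{A}$'' --- is precisely the assertion to be proved, so the argument is circular at the decisive point. Recall that the whole content of the lemma (as the paper remarks right after its statement) is that $\HH$ need not be reduced, so none of this is automatic.

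The paper closes this gap by passing to the square of the antipode. From $S^{\HH}\subseteq S^{\G}$ and the Woronowicz formula $(\i\tensor(S^{\HH})^{2})(u)=(F\tensor\one)u(F^{-1}\tensor\one)$ for an irreducible unitary co-representation $u$ of $\HH$ (with $F$ a strictly positive matrix), one obtains $(\i\tensor\tau_{-i}^{\G})(u)=(\i\tensor(S^{\G})^{2})(u)=(F\tensor\one)u(F^{-1}\tensor\one)$. A classical complex-analysis (analytic continuation) argument then upgrades this single value of the analytic generator at $-i$ to the identity $(\i\tensor\tau_{t}^{\G})(u)=(F^{it}\tensor\one)u(F^{-it}\tensor\one)$ for all real $t$, which is exactly the formula defining $\tau_{t}^{\HH}$ on $u$; this gives $\tau_{t}^{\G}|_{\mathcal{A}}=\tau_{t}^{\HH}$, and the statement for $R$ then follows from $S=R\circ\tau_{-i/2}$. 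Some such analytic-continuation step is unavoidable here and is what your proposal is missing; your treatment of the ``in particular'' clause is fine once the main identities are in place.
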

Such $\HH$ is called a \emph{compactification} of $\G$. The point
is that as $\HH$ is not necessarily reduced in the sense of \citep{Kustermans_Vaes__LCQG_C_star}
(cf.~\citep[Subsection 6.2]{Daws__remarks_quantum_Bohr_comp}), the
above properties of the antipode are not automatic; cf.~\citep[Section 8]{Woronowicz__symetries_quantiques}
and \citep[Proposition 5.45]{Kustermans_Vaes__LCQG_C_star}.
\begin{proof}[Proof of \prettyref{lem:compactification_antipode}]
Every co-representation of $\HH$ is a co-representation of $\G$.
By \prettyref{eq:antipode_rep}, the antipode $S^{\HH}$ of $\HH$
agrees with $S^{\G}$ on the components of all irreducible unitary
co-representations of $\HH$, which linearly span $\mathcal{A}$.
Thus $S^{\HH}\subseteq S^{\G}$, and in particular $S^{\HH}$ is closable.
Let $u\in M_{n}\tensor C(\HH)$ be an irreducible unitary co-representation
of $\HH$. By the classical theory \citep{Woronowicz__symetries_quantiques},
$u\in D(\i\tensor(S^{\HH})^{2})$, and there is a strictly positive
matrix $F\in M_{n}$ such that $(\i\tensor\tau_{t}^{\HH})(u)=(F^{it}\tensor\one)u(F^{-it}\tensor\one)$
for every $t\in\R$ and $(\i\tensor(S^{\HH})^{2})(u)=(F\tensor\one)u(F^{-1}\tensor\one)$.
By the foregoing, $u\in D(\i\tensor(S^{\G})^{2})$ and 
\[
(\i\tensor\tau_{-i}^{\G})(u)=(\i\tensor(S^{\G})^{2})(u)=(F\tensor\one)u(F^{-1}\tensor\one).
\]
A classical complex analysis argument implies that $(\i\tensor\tau_{t}^{\G})(u)=(F^{it}\tensor\one)u(F^{-it}\tensor\one)$
for every $t$. Thus $\i\tensor\tau^{\G}$ and $\i\tensor\tau^{\HH}$
agree on $u$. Consequently, $\tau^{\HH}$ is the restriction of $\tau^{\G}$
to $\mathcal{A}$, and so $\tau^{\HH}$ extends to an automorphism
group of $C(\HH)$. The conclusion about $R$ follows.
\end{proof}
For the following definition see \citep[Definition 3.7]{Runde_Viselter_LCQGs_Ergodic_Thy},
in which the setting is a little different.
\begin{defn}
Let $U$ be a unitary co-representation of a LCQG $\G$ on a Hilbert
space $\H$. A vector $\z\in\H$ is \emph{completely periodic} with
respect to $U$ if it belongs to a finite-dimensional subspace $\H_{0}$
of $\H$, such that $U$ restricts to a sub-representation $u$ on
$\H_{0}$ whose transpose $u^{\t}$ is invertible. The closed linear
span $\H^{\CAP}$ of all completely periodic vectors is called the
subspace of \emph{completely almost periodic} vectors with respect
to $U$.
\end{defn}
Finite-dimensional unitary co-representations admitting an invertible
transpose are \emph{admissible} in the terminology of \citep{Soltan__quantum_Bohr_comp},
in which this notion is used in a broader sense. Proving the invertibility
of the transpose can be done as follows (cf.~\citep[Proposition 3.11]{Daws__remarks_quantum_Bohr_comp}).
Let $u$ be a finite-dimensional unitary co-representation of $\G$
on a Hilbert space $\H_{0}$. Let $n:=\dim\H_{0}$, view $B(\H_{0})$
as $M_{n}$ and denote by $A$ the natural anti-linear isomorphism
of $M_{n}$ given by $\left(a_{ij}\right)\mapsto\left(\overline{a_{ij}}\right)$.
Since the unitary antipode satisfies $R(x)=\hat{J}x^{*}\hat{J}$ for
every $x\in\Linfty{\G}$, from \prettyref{eq:antipode_rep} we get
$(\i\tensor\tau_{-i/2})(u)=(\i\tensor R)(u^{*})=(A\tensor\Ad{\hat{J}})(u^{\t})$.
Thus $u^{\t}$ is invertible if and only if $(\i\tensor\tau_{-i/2})(u)$
is. As $\tau_{-i/2}$ is multiplicative on its domain, this happens
when $u^{*}\in D(\i\tensor\tau_{-i/2})$, that is, $u\in D(\i\tensor\tau_{i/2})=D(\i\tensor S^{-1})$.
Conversely, if $u^{\t}$ is invertible, then the unital $C^{*}$-algebra
$C(\HH)$ generated by the components of $u$, together with the restriction
of $\Delta$ to $C(\HH)$, is a (not necessarily reduced) $C^{*}$-algebraic
compact quantum group $\HH$ by \citep{Woronowicz__remark_on_CQGs},
\citep[Proposition 3.8]{Maes_van_Daele__notes_CQGs}. From \prettyref{lem:compactification_antipode},
the scaling group $\tau^{\HH}$ of $\HH$ is a restriction of $\tau=\tau^{\G}$.
From the general theory \citep{Woronowicz__symetries_quantiques},
the components of $u$ are analytic for $\tau^{\HH}$, thus for $\tau^{\G}$.

Using the idea of the canonical Kac quotient of a compact quantum
group, it is proved in \citep[Subsection 4.3]{Soltan__quantum_Bohr_comp}
that, for a \emph{discrete} quantum group $\G$, the components of
all finite-dimensional unitary co-representations of $\G$ linearly
span the same subspace as those of the ones having an invertible transpose.
The previous paragraph thus implies the following; cf.~\citep[Corollary 6.6]{Daws__remarks_quantum_Bohr_comp}.
\begin{prop}
\label{prop:DQG_invertible_transpose}Every finite-dimensional unitary
co-representation of a discrete quantum group has an invertible transpose.
\end{prop}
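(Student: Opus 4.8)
The plan is to reduce the statement, via the discussion immediately preceding it, to a purely structural fact about discrete quantum groups. Let $u \in M_n \tensor \linfty{\G}$ be a finite-dimensional unitary co-representation of a discrete quantum group $\G$ on $\H_0$, and let $C(\HH)$ be the unital $C^*$-subalgebra of $\linfty{\G}$ generated by the matrix coefficients $u_{ij}$. The key observation is that $C(\HH)$, together with $\Delta|_{C(\HH)}$, is a $C^*$-algebraic compact quantum group $\HH$ in the sense of Woronowicz (this is exactly the ``converse'' direction recalled before the statement, valid as soon as $u$ is a unitary co-representation whose span of coefficients is closed under the comultiplication; cf.~\citep{Woronowicz__remark_on_CQGs,Maes_van_Daele__notes_CQGs}). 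By \prettyref{lem:compactification_antipode}, the scaling group $\tau^{\HH}$ of $\HH$ is the restriction of $\tau^{\G}$ to the canonical dense Hopf $*$-algebra $\Aa$ of $\HH$. So everything comes down to showing that the coefficients of $u$ lie in the domain of $\i \tensor S^{-1} = \i \tensor \tau_{i/2}$ for $\tau = \tau^{\G}$, equivalently that $u^{\t}$ is invertible.

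The main point to invoke is So{\ldash}tan's analysis of the canonical Kac quotient in \citep[Subsection 4.3]{Soltan__quantum_Bohr_comp}: for a discrete quantum group, the linear span of coefficients of \emph{all} finite-dimensional unitary co-representations coincides with the linear span of coefficients of those that are admissible (i.e.~have invertible transpose). First I would make this precise by recalling that $\linfty{\G} = \ell^\infty\text{-}\bigoplus_{\gamma} M_{N(\gamma)}$, so a finite-dimensional unitary co-representation $u$ is supported, up to equivalence, on finitely many blocks $\gamma \in \Irred{\hat\G}$, and its coefficients lie in the finite direct sum of the corresponding matrix algebras; the Kac-quotient argument then shows each such coefficient is already a coefficient of an admissible co-representation. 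Combined with the classical structure theory for compact quantum groups \citep{Woronowicz__symetries_quantiques} — every finite-dimensional unitary co-representation of a CQG decomposes into irreducibles, each of which is admissible with the positive intertwiner $F_\gamma$ implementing $\tau$ — one concludes that the coefficients of $u$ are analytic for $\tau^{\G}$, hence $u \in D(\i\tensor\tau_{i/2})$, hence $u^{\t}$ is invertible by the computation $(\i\tensor\tau_{-i/2})(u) = (A\tensor\Ad{\hat J})(u^{\t})$ recorded above.

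A slightly more self-contained route, which I would present as the actual argument, is: pass to the Kac quotient. Let $\G_{\mathrm{Kac}}$ denote the canonical Kac quotient of the compact dual in So{\ldash}tan's sense, or rather work directly with the fact that the coefficients of $u$ together with their images under $R$ and $\tau_t$ generate a $*$-subalgebra on which $\tau$ acts by a one-parameter automorphism group with bounded (indeed, finite-dimensional block) orbits; boundedness of the $\tau$-orbit of each coefficient forces it to be $\tau$-analytic. The cleanest formulation is to note that the von Neumann algebra generated by the blocks supporting $u$ is finite-dimensional and $\tau$-invariant, and $\tau$ restricted to a finite-dimensional algebra is automatically given by $\tau_t(x) = a^{it} x a^{-it}$ for a positive invertible $a$, so every element is entire for $\tau$; in particular $u \in D(\i\tensor\tau_{i/2})$.

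The step I expect to be the genuine obstacle — and the reason the statement needs \prettyref{lem:compactification_antipode} rather than being a one-line corollary of $\linfty{\G}$ having finite-dimensional blocks — is controlling the interaction between the \emph{ambient} scaling group $\tau^{\G}$ and the \emph{intrinsic} scaling group $\tau^{\HH}$ of the compact quantum group $\HH$ generated by $u$. A priori the relevant positive operator implementing the modular/scaling data of $\HH$ need not be the restriction of the ambient one, so one cannot simply say ``finite-dimensional algebras are Kac'' and be done; \prettyref{lem:compactification_antipode} is precisely what pins $\tau^{\HH} = \tau^{\G}|_{\Aa}$ down. Once that identification is in hand, the remaining work is the bookkeeping in the previous paragraphs, which is routine given \citep{Soltan__quantum_Bohr_comp} and \citep{Woronowicz__symetries_quantiques}.
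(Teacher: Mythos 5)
The skeleton you mention in passing --- So{\ldash}tan's span identity from \citep[Subsection 4.3]{Soltan__quantum_Bohr_comp} combined with the equivalence ``$u^{\t}$ invertible $\iff u\in D(\i\tensor\tau_{i/2})$'' --- \emph{is} the paper's entire proof: each coefficient of $u$ is a finite linear combination of coefficients of admissible co-representations, each of those is analytic for $\tau^{\G}$ by \prettyref{lem:compactification_antipode} and the discussion in \prettyref{sub:prelim_CAP}, and $D(\tau_{i/2})$ is a linear subspace, so $u\in D(\i\tensor\tau_{i/2})$. But both of the ways you propose to flesh this out contain genuine errors. Your opening ``key observation'' --- that the algebra generated by the coefficients of an \emph{arbitrary} finite-dimensional unitary co-representation is a compact quantum group --- is circular: the ``converse'' direction recalled before the statement assumes $u^{\t}$ invertible (that hypothesis is exactly what \citep{Woronowicz__remark_on_CQGs} and \citep[Proposition 3.8]{Maes_van_Daele__notes_CQGs} require), and if unitarity alone sufficed, the proposition would hold for every LCQG, which is precisely the open problem cited immediately after it.

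The ``self-contained route'' you say you would present as the actual argument fails outright, because it rests on a false structural claim. The coefficients of a finite-dimensional unitary co-representation of a discrete quantum group are \emph{not} supported on finitely many blocks of $\linfty{\G}=\ell^{\infty}-\bigoplus_{\gamma\in\Irred{\hat{\G}}}M_{N(\gamma)}$: the blocks are indexed by irreducible co-representations of the \emph{compact dual} $\hat{\G}$, not of $\G$, and already the trivial co-representation has coefficient $\one$, which sits in every block (likewise any group-like unitary). Hence the von Neumann algebra generated by the coefficients need not be finite-dimensional and the inner-automorphism argument does not apply. The scaling group does act block-wise, with every element of a single block entire analytic, but the analytic extensions $\tau_{i/2}$ are not uniformly bounded across blocks (for the dual of $SU_{q}(2)$ the implementing positive matrices have unboundedly growing eigenvalues), so an element spread over infinitely many blocks can perfectly well fail to lie in $D(\tau_{i/2})$ --- this is exactly why the statement has content and why the Kac-quotient input is indispensable. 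The auxiliary principle ``bounded $\tau$-orbit implies $\tau$-analytic'' is also false: every element of a von Neumann algebra has bounded orbit under a group of automorphisms. The fix is to discard the third paragraph and the finite-support sentence of the second, and let So{\ldash}tan's result carry the proof as in the first sentence above.
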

It is unknown whether this holds true for all LCQGs; see \citep[Conjectures 7.1 and 7.2]{Daws__remarks_quantum_Bohr_comp}.

\subsection{$\overline{U}$ and $U'$}
\begin{defn}
\label{def:U_bar_U'}Let $\G$ be a LCQG and let $U\in B(\H)\tensorn\Linfty{\G}$
be a unitary co-representation of $\G$ on a Hilbert space $\H$.
\begin{enumerate}
\item (cf.~\citep{Bedos_Tuset_2003}) Fixing some anti-unitary $\J$ from
$\H$ onto another Hilbert space $\J\H$, consider the $*$-anti-isomorphism
$j:B(\H)\to B(\J\H)$ given by $j(x):=\J x^{*}\J^{*}$ for all $x\in B(\H)$,
and define the unitary co-representation of $\G$ \emph{conjugate}
to $U$ to be $\overline{U}:=(j\tensor R)(U)\in B(\J\H)\tensorn\Linfty{\G}$.
\item Let $U'$ be the formal object $(\i\tensor\tau_{-i/2})(U)=(\i\tensor R)(U^{*})$
(see \prettyref{eq:antipode_rep}). Rigorously, recalling that $B(\Lone{\G},B(\H))\cong B(B(\H)_{*},\Linfty{\G})$
canonically, we view $U'$ either as the element $\Lone{\G}\ni\om\mapsto(\i\tensor(\om\circ R))(U^{*})$
of $B(\Lone{\G},B(\H))$ or, equivalently, as the matching element
$B(\H)_{*}\ni\rho\mapsto\tau_{-i/2}((\rho\tensor\i)(U))=R((\rho\tensor\i)(U^{*}))$
of $B(B(\H)_{*},\Linfty{\G})$, thus defining the expressions $(\i\tensor\om)(U')$
and $(\rho\tensor\i)(U')$.
\end{enumerate}
\end{defn}
When the maps defining $U'$ happen to be completely bounded, we get
an element of $B(\H)\tensorn\Linfty{\G}$, as this operator space
is naturally identified with $\mathcal{CB}(\Lone{\G},B(\H))\cong\mathcal{CB}(B(\H)_{*},\Linfty{\G})$
\citep[Chapter 7]{Effros_Ruan__book}. For instance, when the scaling
group of $\G$ is trivial (e.g., if $\G$ is a Kac algebra), we have
$U'=U$.

Plainly, $\overline{U}$ depends on $\J$ only up to unitary equivalence.
Using \prettyref{eq:R_Delta}, one verifies that $\overline{U}$ is
indeed a unitary co-representation of $\G$ on $\J\H$, and that $U'$
is formally an unbounded co-representation of $\G$ on $\H$ in the
sense that $[\i\tensor((\om_{1}*\om_{2})\circ R)](U^{*})=(\i\tensor(\om_{1}\circ R))(U^{*})(\i\tensor(\om_{2}\circ R))(U^{*})$
for all $\om_{1},\om_{2}\in\Lone{\G}$.
\begin{lem}
\label{lem:U'_left_leg}Let $U$ be a unitary co-representation of
a LCQG $\G$ on a Hilbert space $\H$. For every $\z,\eta\in\H$,
we have
\begin{equation}
(\om_{\J\z,\J\eta}\tensor\i)(\overline{U})=R\left[(\om_{\eta,\z}\tensor\i)(U)\right]=(\om_{\z,\eta}\tensor\i)(U')^{*}.\label{eq:U'_left_leg__1}
\end{equation}
In addition, for every Hilbert space $\K$ and $\Xi\in\J\H\tensor\K$,
the operator $T_{\Xi}\in B(\H,\K)$ given by $T_{\Xi}\z:=((\J\z)^{*}\tensor\one)(\Xi)$,
$\z\in\H$, satisfies 
\begin{equation}
(\om_{(\one\tensor\xi^{*})(\Xi),\J\z}\tensor\i)(\overline{U}^{*})=(\om_{\z,T_{\Xi}^{*}\xi}\tensor\i)(U')\label{eq:U'_left_leg__2}
\end{equation}
for every $\z\in\H$ and $\xi\in\K$.\end{lem}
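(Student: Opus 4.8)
The plan is to reduce both identities to a single elementary observation about how the $*$-anti-isomorphism $j$ acts on vector functionals, namely that
\[
\om_{\J\z,\J\eta}\circ j=\om_{\eta,\z}\qquad(\z,\eta\in\H).
\]
This is immediate from $j(x)=\J x^{*}\J^{*}$ together with anti-unitarity of $\J$: the relation $\J^{*}\J=\one_{\H}$ gives $j(x)\J\z=\J x^{*}\z$, and $\langle\J a,\J b\rangle=\overline{\langle a,b\rangle}$ then yields $\langle j(x)\J\z,\J\eta\rangle=\overline{\langle x^{*}\z,\eta\rangle}=\langle x\eta,\z\rangle$. Beyond this I shall use only standard facts: $R$ is a normal $*$-anti-automorphism of $\Linfty{\G}$, so $(\rho\tensor R)(X)=R\bigl((\rho\tensor\i)(X)\bigr)$ for every normal functional $\rho$ and every $X$ in the relevant von Neumann tensor product, and $(j\tensor R)(U)^{*}=(j\tensor R)(U^{*})$, i.e.\ $\overline{U}^{*}=(j\tensor R)(U^{*})$; slicing commutes with adjoints in the form $\bigl((\om_{\eta,\z}\tensor\i)(U)\bigr)^{*}=(\om_{\z,\eta}\tensor\i)(U^{*})$ (check it on elementary tensors and extend by normality); and, by \prettyref{def:U_bar_U'}, $(\om_{\z,\eta}\tensor\i)(U')=R\bigl((\om_{\z,\eta}\tensor\i)(U^{*})\bigr)$.

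Granting these, \eqref{eq:U'_left_leg__1} is a one-line computation. Applying the first two facts to $\overline{U}=(j\tensor R)(U)$ gives
\[
(\om_{\J\z,\J\eta}\tensor\i)(\overline{U})=\bigl((\om_{\J\z,\J\eta}\circ j)\tensor R\bigr)(U)=R\bigl((\om_{\eta,\z}\tensor\i)(U)\bigr),
\]
which is the first equality. For the second, combine the slicing--adjoint identity, the $*$-preservation of $R$, and the definition of $U'$:
\[
R\bigl((\om_{\eta,\z}\tensor\i)(U)\bigr)^{*}=R\Bigl(\bigl((\om_{\eta,\z}\tensor\i)(U)\bigr)^{*}\Bigr)=R\bigl((\om_{\z,\eta}\tensor\i)(U^{*})\bigr)=(\om_{\z,\eta}\tensor\i)(U'),
\]
and take adjoints.

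For \eqref{eq:U'_left_leg__2} I would first record that $T_{\Xi}$ is a well-defined element of $B(\H,\K)$: it is linear because $\z\mapsto(\J\z)^{*}$ is linear (a composition of two conjugate-linear maps), and $\|T_{\Xi}\z\|\leq\|(\J\z)^{*}\|\,\|\Xi\|=\|\z\|\,\|\Xi\|$. The crux is then the subidentity
\[
(\one\tensor\xi^{*})(\Xi)=\J T_{\Xi}^{*}\xi\qquad(\xi\in\K),
\]
which one verifies by writing $\Xi=\sum_{i}v_{i}\tensor w_{i}$, computing $T_{\Xi}^{*}\xi=\sum_{i}\overline{\langle w_{i},\xi\rangle}\,\J^{*}v_{i}$ from $\langle T_{\Xi}\z,\xi\rangle=\sum_{i}\langle v_{i},\J\z\rangle\langle w_{i},\xi\rangle$ and the adjoint relation $\langle\J a,b\rangle=\overline{\langle a,\J^{*}b\rangle}$, and then applying $\J$. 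With this in hand, $\om_{(\one\tensor\xi^{*})(\Xi),\J\z}=\om_{\J(T_{\Xi}^{*}\xi),\J\z}$, so using $\overline{U}^{*}=(j\tensor R)(U^{*})$, the identity $\om_{\J a,\J b}\circ j=\om_{b,a}$ once more, and the definition of $U'$,
\[
(\om_{(\one\tensor\xi^{*})(\Xi),\J\z}\tensor\i)(\overline{U}^{*})=\bigl((\om_{\J(T_{\Xi}^{*}\xi),\J\z}\circ j)\tensor R\bigr)(U^{*})=R\bigl((\om_{\z,T_{\Xi}^{*}\xi}\tensor\i)(U^{*})\bigr)=(\om_{\z,T_{\Xi}^{*}\xi}\tensor\i)(U').
\]
The only real obstacle is bookkeeping: $\J$, $j$, the map $v\mapsto v^{*}$, $R$, and the passage $\om\mapsto\bar{\om}$ are each order- or conjugation-reversing, so one must be careful to place every conjugate and every reversal correctly; none of the steps is deep.
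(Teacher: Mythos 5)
Your proposal is correct and follows essentially the same route as the paper: the paper's proof likewise rests on the identity $\om_{\J\z,\J\eta}\circ j=\om_{\eta,\z}$ for \eqref{eq:U'_left_leg__1} and on the observation $\J T_{\Xi}^{*}\xi=(\one\tensor\xi^{*})(\Xi)$ for \eqref{eq:U'_left_leg__2}, the only cosmetic difference being that the paper obtains \eqref{eq:U'_left_leg__2} by taking adjoints in \eqref{eq:U'_left_leg__1} with $\eta:=T_{\Xi}^{*}\xi$, whereas you recompute it directly from the definitions of $\overline{U}$ and $U'$. Your filled-in verifications (linearity and boundedness of $T_{\Xi}$, the slicing--adjoint identity, and $\overline{U}^{*}=(j\tensor R)(U^{*})$) are all accurate.
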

\begin{proof}
Equation \prettyref{eq:U'_left_leg__1} is obtained from the equality
$\om_{\J\z,\J\eta}\circ j=\om_{\eta,\z}$. Notice that $T_{\Xi}^{*}\xi=\J(\one\tensor\xi^{*})(\Xi)$
for each $\xi\in\K$. Hence, taking adjoints in \prettyref{eq:U'_left_leg__1}
with $\eta:=T_{\Xi}^{*}\xi$, we establish \prettyref{eq:U'_left_leg__2}.
\end{proof}
We now explain the nomenclature used in Definitions \ref{def:weak_mixing}
and \ref{def:strict_weak_mixing}.
\begin{defn}
\label{def:BR}Consider unitary co-representations $U,V$ of a LCQG
$\G$ on Hilbert spaces $\H,\K$, respectively. Denote by $\mathcal{BR}(V\tp U')$
the set of all closed subspaces $\mathcal{F}$ of $\K\tensor\H$ such
that, with $p_{\mathcal{F}}$ being the projection of $\K\tensor\H$
onto $\mathcal{F}$, we have $U_{23}(p_{\mathcal{F}}\tensor\one)\in D(\i\tensor\i\tensor\tau_{-i/2})$,
and $Z:=V_{13}(\i\tensor\i\tensor\tau_{-i/2})(U_{23}(p_{\mathcal{F}}\tensor\one))$
is a co-representation of $\G$ commuting with $p_{\mathcal{F}}\tensor\one$.
In particular, if $\mathcal{F}=\C\Xi$ for a vector $\Xi\in\K\tensor\H$
and $Z$ is equal to $p_{\C\Xi}\tensor\one$, we say that \emph{$\Xi$
is invariant under $V\tp U'$.}
\end{defn}
Evidently, if the scaling group of $\G$ is trivial (so that $U'=U$),
then $\K\tensor\H$ itself belongs to $\mathcal{BR}(V\tp U')$, and
invariance of a vector under $V\tp U'$ has the usual meaning\emph{.}

For general LCQGs, invariance of vectors under $V\tp U'$ has a more
concrete interpretation\emph{.}
\begin{lem}
\label{lem:V_U'_invariant_vector}Let $U,V$ be unitary co-representations
of a LCQG $\G$ on Hilbert spaces $\H,\K$, respectively, and $\Xi\in\K\tensor\H$.
Then $\Xi$ is invariant under $V\tp U'$ if and only if for all $\z\in\K$,
$\xi\in\H$ 
\begin{equation}
(\om_{(\z^{*}\tensor\one)(\Xi),\xi}\tensor\i)(U')=(\om_{(\one\tensor\xi^{*})(\Xi),\z}\tensor\i)(V^{*})\qquad(\forall\z\in\K,\xi\in\H).\label{eq:V_U'_invariant_vector}
\end{equation}
\end{lem}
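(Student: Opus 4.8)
The plan is to unravel Definition~\ref{def:BR} in the special case $\mathcal{F}=\C\Xi$ and to translate the condition ``$Z$ equals $p_{\C\Xi}\tensor\one$'' into the slice-functional identity \prettyref{eq:V_U'_invariant_vector}, using \prettyref{lem:U'_left_leg} as the dictionary. First I would write out $Z=V_{13}(\i\tensor\i\tensor\tau_{-i/2})(U_{23}(p_{\C\Xi}\tensor\one))$ on the Hilbert space $\K\tensor\H\tensor\Ltwo{\G}$, and compare it with $p_{\C\Xi}\tensor\one\tensor\one$ (the latter being the $B(\K\tensor\H)\tensorn\Linfty{\G}$-object $p_{\C\Xi}\tensor\one$). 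Since $p_{\C\Xi}$ is the rank-one projection onto $\C\Xi$, the equality $Z=p_{\C\Xi}\tensor\one$ is equivalent, after applying both sides to an arbitrary elementary tensor and pairing, to the statement that for all $\z\in\K$ and all $\xi\in\H$,
\[
(\om_{\Xi,\z\tensor\xi}\tensor\i)(Z)=\langle\Xi,\z\tensor\xi\rangle\,\one,
\]
together with the fact that $(\eta^{*}\tensor\one)(Z)=0$ on the orthogonal complement $\{\Xi\}^{\perp}$; but the second part is automatic once the first holds, because $Z$ arises from a co-representation and a single evaluation against $\Xi$ determines it. So the crux is to compute $(\om_{\Xi,\z\tensor\xi}\tensor\i)(Z)$.

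The key computational step is to peel off the two legs of $Z$ one at a time. Evaluating the ``$\H$-leg'' of $(\i\tensor\i\tensor\tau_{-i/2})(U_{23}(p_{\C\Xi}\tensor\one))$ against $\xi$ and a vector of the form $(\one\tensor\xi^{*})(\Xi)$ (this is exactly the pattern appearing on the right of \prettyref{eq:U'_left_leg__1}, modulo the identification of $U'$ with $(\i\tensor\tau_{-i/2})(U)$), one obtains $(\om_{(\one\tensor\xi^{*})(\Xi),\,\cdot\,}\tensor\i)(U')$ as in the formal-unbounded-co-representation interpretation of Definition~\ref{def:U_bar_U'}(b). Then evaluating the remaining ``$\K$-leg'' of $V_{13}$ against $\z$ and the vector $(\z^{*}\tensor\one)(\Xi)$ produces $(\om_{(\z^{*}\tensor\one)(\Xi),\,\cdot\,}\tensor\i)(V)$. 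Setting the resulting product of the two slices equal to the scalar $\langle\Xi,\z\tensor\xi\rangle$ and using that $V$ is unitary (so that $(\om\tensor\i)(V^{*})=(\om\tensor\i)(V)^{*}$ after adjusting the vectors) rearranges into precisely \prettyref{eq:V_U'_invariant_vector}. Concretely, the identity to verify is that, for fixed $\z,\xi$, the functional $\om\mapsto\om((\om_{(\z^{*}\tensor\one)(\Xi),\xi}\tensor\i)(U'))$ coincides with $\om\mapsto\om((\om_{(\one\tensor\xi^{*})(\Xi),\z}\tensor\i)(V^{*}))$; the domain bookkeeping for $\tau_{-i/2}$ is handled by the hypothesis $U_{23}(p_{\C\Xi}\tensor\one)\in D(\i\tensor\i\tensor\tau_{-i/2})$, which is built into Definition~\ref{def:BR}.

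The main obstacle I anticipate is purely notational/bookkeeping rather than conceptual: one must be careful about which leg is which under the flip conventions the paper adopts (``left regular co-representation'' versus ``right unitary co-representations''), about the fact that $U'$ is only a formal, possibly unbounded object so that all manipulations must be read as identities between the associated maps $B(\H)_{*}\to\Linfty{\G}$ in the sense of Definition~\ref{def:U_bar_U'}(b), and about the passage between $Z$ acting on $\K\tensor\H\tensor\Ltwo{\G}$ and the slice functionals $\om\in\Lone{\G}$. The cleanest route is to phrase everything through \prettyref{lem:U'_left_leg}: that lemma already packages the relation between $\overline{U}$, $R$, and $U'$ in exactly the vector-dependent form \prettyref{eq:U'_left_leg__1}--\prettyref{eq:U'_left_leg__2} that the right-hand side of \prettyref{eq:V_U'_invariant_vector} needs, so invoking it reduces the proof to a short matching of indices plus an application of co-associativity $(\i\tensor\Delta)(U)=U_{12}U_{13}$ (and its analogue for $V$) to see that a single evaluation against $\Xi$ forces $Z=p_{\C\Xi}\tensor\one$ globally. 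I would also remark explicitly that when the scaling group is trivial this collapses to the classical statement that $\Xi$ is a joint invariant vector for $V\tp U$, consistent with the observation made just before the lemma.
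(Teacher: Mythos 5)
Your argument for the forward implication is essentially the paper's: since $V_{13}$ is unitary, $Z=p_{\C\Xi}\tensor\one$ is the same as $(\i\tensor\i\tensor\tau_{-i/2})(U_{23}(p_{\C\Xi}\tensor\one))=V_{13}^{*}(p_{\C\Xi}\tensor\one)$, and applying $\om_{\Xi,\z\tensor\xi}\tensor\i$ to both sides yields \prettyref{eq:V_U'_invariant_vector}. (Two small points: the vanishing of $Z$ on $\{\Xi\}^{\perp}\tensor\Ltwo{\G}$ is automatic simply because $Z=Z(p_{\C\Xi}\tensor\one)$ by construction, not because ``a single evaluation determines a co-representation''; and in your leg-by-leg description the vectors are paired with the wrong representations --- $(\z^{*}\tensor\one)(\Xi)$ lies in $\H$ and goes with $U'$, while $(\one\tensor\xi^{*})(\Xi)$ lies in $\K$ and goes with $V^{*}$.)

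The genuine gap is in the converse. You write that the domain bookkeeping for $\tau_{-i/2}$ is ``handled by the hypothesis $U_{23}(p_{\C\Xi}\tensor\one)\in D(\i\tensor\i\tensor\tau_{-i/2})$, which is built into Definition \ref{def:BR}.'' But for $(\impliedby)$ that membership is part of the \emph{conclusion}: invariance of $\Xi$ means $\C\Xi\in\mathcal{BR}(V\tp U')$, whose very first requirement is that $U_{23}(p_{\C\Xi}\tensor\one)$ lie in the domain of the analytic generator $\i\tensor\i\tensor\tau_{-i/2}$ acting on the whole von Neumann algebra tensor product. The hypothesis \prettyref{eq:V_U'_invariant_vector} only says something slice by slice: $U'$ is merely the formal object of \prettyref{def:U_bar_U'}, so each $(\om_{(\z^{*}\tensor\one)(\Xi),\xi}\tensor\i)(U')=\tau_{-i/2}\bigl((\om_{(\z^{*}\tensor\one)(\Xi),\xi}\tensor\i)(U)\bigr)$ is automatically a bounded operator, and knowing that every slice of an element lands in $D(\tau_{-i/2})$ with prescribed bounded images does not by itself put the element in $D(\i\tensor\i\tensor\tau_{-i/2})$ --- this is exactly the distinction the paper draws between $U'$ as a formal object and $U'$ as an honest element of $B(\H)\tensorn\Linfty{\G}$. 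The missing step, which is the substance of the paper's proof of this direction, is to approximate $U_{23}(p_{\C\Xi}\tensor\one)$ by finite sums $\sum_{\a,\be}((f_{\be}\tensor e_{\a})\tensor\Xi^{*})\tensor(\om_{(f_{\be}^{*}\tensor\one)(\Xi),e_{\a}}\tensor\i)(U)$ over orthonormal bases; each such sum lies in the domain because the hypothesis identifies the image of each summand as the corresponding slice of $V^{*}$, the images converge to $V_{13}^{*}(p_{\C\Xi}\tensor\one)$, and the closedness of $\i\tensor\i\tensor\tau_{-i/2}$ then delivers both the domain membership and the equality $Z=p_{\C\Xi}\tensor\one$. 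Without an argument of this kind the converse is not proved.
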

\begin{proof}
Write $p_{\C\Xi}$ for the projection of $\K\tensor\H$ onto $\C\Xi$.
The implication $(\implies)$ is obtained by applying $\om_{\Xi,\z\tensor\xi}\tensor\i$
to both sides of the equality $(\i\tensor\i\tensor\tau_{-i/2})(U_{23}(p_{\C\Xi}\tensor\one))=V_{13}^{*}(p_{\C\Xi}\tensor\one)$.
For $(\impliedby)$, approximate $U_{23}(p_{\C\Xi}\tensor\one)$ by
operators of the form 
\[
\sum_{\a,\be}((f_{\be}\tensor e_{\a})\tensor\Xi^{*})\tensor(\om_{\Xi,f_{\be}\tensor e_{\a}}\tensor\i)(U_{23})=\sum_{\a,\be}((f_{\be}\tensor e_{\a})\tensor\Xi^{*})\tensor(\om_{(f_{\be}^{*}\tensor\one)(\Xi),e_{\a}}\tensor\i)(U),
\]
where $\left(e_{\a}\right),\left(f_{\be}\right)$ are orthonormal
bases of $\H,\K$, respectively, and the sums are finite. By assumption,
applying $\i\tensor\i\tensor\tau_{-i/2}$ to this operator gives $\sum_{\a,\be}((f_{\be}\tensor e_{\a})\tensor\Xi^{*})\tensor(\om_{(\one\tensor e_{\a}^{*})(\Xi),f_{\be}}\tensor\i)(V^{*})$.
The closedness of $\i\tensor\i\tensor\tau_{-i/2}$ thus yields the
desired conclusion.
\end{proof}

\subsection{Weak mixing}
\begin{defn}
\label{def:matched_vectors}Let $\H,\K$ be Hilbert spaces. Vectors
$\Xi,\Upsilon\in\H\tensor\K$ are said to be \emph{matched} if for
every $\z\in\H$ and $\xi\in\K$,
\begin{itemize}
\item $\left\langle (\z^{*}\tensor\one)\Xi,(\z^{*}\tensor\one)\Upsilon\right\rangle ,\left\langle (\one\tensor\xi^{*})\Xi,(\one\tensor\xi^{*})\Upsilon\right\rangle \in\R$,
and 
\item $(\z^{*}\tensor\one)\Xi=0\iff(\z^{*}\tensor\one)\Upsilon=0$, $(\one\tensor\xi^{*})\Xi=0\iff(\one\tensor\xi^{*})\Upsilon=0$.
\end{itemize}
\end{defn}
Let $\H,\K$ be Hilbert spaces, $\mathcal{J}:\H\to\J\H$ an anti-unitary
and $\Xi,\Upsilon\in\J\H\tensor\K$. Define $T_{\Xi},T_{\Upsilon}\in B(\H,\K)$
by $T_{\Xi}\z:=((\J\z)^{*}\tensor\one)(\Xi)$, $\z\in\H$, and similarly
for $T_{\Upsilon}$. Notice that $T_{\Xi}^{*}T_{\Upsilon}\in B(\H)$
and $T_{\Xi}T_{\Upsilon}^{*}\in B(\K)$ are selfadjoint if and only
if the first condition of \prettyref{def:matched_vectors} holds,
while the second is equivalent to $\ker T_{\Xi}=\ker T_{\Upsilon}$
and $\ker T_{\Xi}^{*}=\ker T_{\Upsilon}^{*}$. Hence, by \prettyref{lem:abc},
$\Xi,\Upsilon$ are matched if and only if there exists a (generally
unbounded) injective selfadjoint operator $B$ over $\K$ commuting
with $T_{\Xi}T_{\Xi}^{*}$ such that $T_{\Upsilon}=BT_{\Xi}$. The
latter just means that $\Upsilon=(\one\tensor B)\Xi$.

We use the convention that for $y\in\Linfty{\G}$ and $c\geq0$, we
write $\left\Vert \tau_{i/2}(y)\right\Vert >c$ to either mean that
$y$ does not belongs to $D(\tau_{i/2})$ or that it does and genuinely
$\left\Vert \tau_{i/2}(y)\right\Vert >c$.
\begin{defn}
\label{def:weak_mixing}Let $\G$ be a LCQG and let $U,V$ be unitary
co-representations of $\G$ on Hilbert spaces $\H,\K$, respectively.
We introduce the following \emph{weak mixing} conditions.
\begin{description}
\item [{(WM)\label{desc:weak_mixing}}] For every $\z_{1},\ldots,\z_{n}\in\H$
such that some element of $\mathcal{BR}(\overline{U}\tp U')$ contains
all vectors $\J\z_{i}\tensor\z_{i}$, $1\leq i\leq n$, and every
$\e>0$, \emph{either} there is a state $\om\in\Lone{\G}$ such that
$\om\left(\left|(\om_{\z_{i},\z_{j}}\tensor\i)(U)\right|\right)\leq\e$
for each $1\leq i,j\leq n$, \emph{or} there exist $1\leq i,j\leq n$
and $\rho_{1},\rho_{2}\in\Lone{\G}$ of norm $1$ such that  $\left\Vert \tau_{i/2}\left[\left(\om_{(\i\tensor\rho_{1})(U^{*})\z_{i},(\i\tensor\rho_{2})(U^{*})\z_{j}}\tensor\i\right)(U)\right]\right\Vert >\e^{-1}$.
\item [{(PSE)\label{desc:prod_sim_ergodic}}] The tensor product co-representations
$V\tp U$ and $V\tp U'$ are \emph{simultaneously ergodic}: they do
not admit nonzero \emph{matched} invariant vectors in $\K\tensor\H$.
\item [{(PEB)\label{desc:prod_ergodic_bdd}}] The tensor product co-representation
$V\tp U'$ does not admit a nonzero invariant vector $\Xi\in\K\tensor\H$
such that $(\om_{(\z^{*}\tensor\one)\Xi,(\xi^{*}\tensor\one)\Xi}\tensor\i)(U)\in D(\tau_{i/2})$
for all $\z,\xi\in\K$.
\item [{(NCAP)\label{desc:no_CAP_vect}}] There are no nonzero completely
(almost) periodic vectors with respect to $U$, that is, $U$ does
not admit a nonzero finite-dimensional sub-representation with an
invertible transpose.
\end{description}
If $\G$ is amenable and $m\in\Linfty{\G}^{*}$ is a right-invariant
mean for $\G$, we introduce the following strengthening of \prettyref{desc:weak_mixing}:
\begin{description}
\item [{(WMa)\label{desc:weak_mixing_amenable}}] For every $\z_{1},\ldots,\z_{n}\in\H$
such that some element of $\mathcal{BR}(\overline{U}\tp U')$ contains
all vectors $\J\z_{i}\tensor\z_{i}$, $1\leq i\leq n$, \emph{either}
for each $1\leq i,j\leq n$ we have $m\left(\left|(\om_{\z_{i},\z_{j}}\tensor\i)(U)\right|\right)=0$,
\emph{or} for every $\e>0$ there exist $1\leq i,j\leq n$ and $\rho_{1},\rho_{2}\in\Lone{\G}$
of norm $1$ such that $\left\Vert \tau_{i/2}\left[\left(\om_{(\i\tensor\rho_{1})(U^{*})\z_{i},(\i\tensor\rho_{2})(U^{*})\z_{j}}\tensor\i\right)(U)\right]\right\Vert >\e^{-1}$.
\end{description}
\end{defn}

\begin{defn}
\label{def:strict_weak_mixing}Let $\G$ be a LCQG and let $U,V$
be unitary co-representations of $\G$ on Hilbert spaces $\H,\K$,
respectively. We introduce the following \emph{strict weak mixing}
conditions.
\begin{description}
\item [{(sWM)\label{desc:strict_weak_mixing}}] For every $\z_{1},\ldots,\z_{n}\in\H$
such that some element of $\mathcal{BR}(\overline{U}\tp U')$ contains
all vectors $\J\z_{i}\tensor\z_{i}$, $1\leq i\leq n$, and every
$\e>0$, there is a state $\om\in\Lone{\G}$ such that $\om\left(\left|(\om_{\z_{i},\z_{j}}\tensor\i)(U)\right|\right)\leq\e$
for each $1\leq i,j\leq n$.
\item [{(PE)\label{desc:prod_ergodic}}] The tensor product co-representation
$V\tp U'$ is \emph{ergodic}.
\item [{(NFDS)\label{desc:no_FDS}}] There is no nonzero finite-dimensional
sub-representation of $U$.
\end{description}
If $\G$ is amenable and $m\in\Linfty{\G}^{*}$ is a right-invariant
mean for $\G$, define:
\begin{description}
\item [{(sWMa)\label{desc:strict_weak_mixing_amenable}}] For every $\z_{1},\ldots,\z_{n}\in\H$
such that some element of $\mathcal{BR}(\overline{U}\tp U')$ contains
all vectors $\J\z_{i}\tensor\z_{i}$, $1\leq i\leq n$, we have $m\left(\left|(\om_{\z_{i},\z_{j}}\tensor\i)(U)\right|\right)=0$
for each $1\leq i,j\leq n$.
\end{description}
\end{defn}
The following is the main result of this section.
\begin{thm}
\label{thm:wm_conditions}Let $\G$ be a LCQG and let $U$ be a unitary
co-representation of $\G$.
\begin{enumerate}
\item \label{enu:wm_conditions_1}The conditions (\prettyref{desc:prod_sim_ergodic}
for $V:=\overline{U}$), (\prettyref{desc:prod_sim_ergodic} holding
for every $V$), (\prettyref{desc:prod_ergodic_bdd} for $V:=\overline{U}$),
(\prettyref{desc:prod_ergodic_bdd} holding for every $V$) and \prettyref{desc:no_CAP_vect}
are equivalent, and are implied by \prettyref{desc:weak_mixing}.
\item \label{enu:wm_conditions_2}We have \prettyref{desc:strict_weak_mixing}$\implies$\prettyref{desc:no_FDS}$\implies$(\prettyref{desc:prod_ergodic}
holding for every $V$)$\implies$(\prettyref{desc:prod_ergodic}
for $V:=\overline{U}$).
\item If $\G$ is amenable, then all conditions in \prettyref{enu:wm_conditions_1}
are equivalent to one another and to \prettyref{desc:weak_mixing_amenable},
and all conditions in \prettyref{enu:wm_conditions_2} are equivalent
to one another and to \prettyref{desc:strict_weak_mixing_amenable}.
\item If the scaling group of $\G$ is trivial, then all conditions in \prettyref{enu:wm_conditions_1}
and in \prettyref{enu:wm_conditions_2} are equivalent to one another.
\end{enumerate}
\end{thm}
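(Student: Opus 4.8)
The plan is to reproduce, for unitary co-representations of a LCQG, the classical pattern \prettyref{desc:classical_weak_mixing_1}$\implies$\prettyref{desc:classical_weak_mixing_3}$\iff$(WM2). I would arrange the conditions into two parallel ``towers'' --- the weak-mixing one, \prettyref{desc:weak_mixing}, \prettyref{desc:prod_sim_ergodic}, \prettyref{desc:prod_ergodic_bdd}, \prettyref{desc:no_CAP_vect}, and the strict one, \prettyref{desc:strict_weak_mixing}, \prettyref{desc:prod_ergodic}, \prettyref{desc:no_FDS} --- first establishing all equivalences that do not involve an invariant mean, then invoking amenability for the single remaining implication in each tower, and finally recording what happens when $\tau=\i$. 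The conceptual bridge, exactly as in the classical theory, is the identification of a vector $\Xi\in\K\tensor\H$ with a Hilbert--Schmidt (hence compact) operator: by \prettyref{lem:V_U'_invariant_vector}, invariance of $\Xi$ under $V\tp U'$ is an intertwining identity between slices of $U'$ and of $V^{*}$, and --- through \prettyref{lem:abc} and the calculus recalled in \prettyref{sub:prelim_CAP} --- the ``matched'' and $\tau_{i/2}$-domain provisos translate into self-adjointness and $\tau$-analyticity of the associated positive operator.

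For the algebraic core of part \ref{enu:wm_conditions_1} I would prove two implications and close a cycle, using that ``for all $V$'' trivially implies ``for $V=\overline{U}$''. If $U$ has a nonzero finite-dimensional sub-representation $u$ on $\H_{0}$ with invertible transpose, then $u$ is analytic for $\tau$ (the discussion after \prettyref{prop:DQG_invertible_transpose}), $u'=(\i\tensor\tau_{-i/2})(u)$ is bounded, and the trace vector $\Xi_{0}:=\sum_{i}\J e_{i}\tensor e_{i}$, for an orthonormal basis $(e_{i})$ of $\H_{0}$, is invariant under $\overline{U}\tp U'$ --- a direct check using \prettyref{lem:U'_left_leg} and \prettyref{lem:V_U'_invariant_vector}. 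Moreover $\Xi_{0}$ meets the $\tau_{i/2}$-domain condition of \prettyref{desc:prod_ergodic_bdd}, and, together with a rescaling $(\one\tensor B)\Xi_{0}$ by a suitable positive invertible operator $B$ (which is invariant under $\overline{U}\tp U$, so that the pair has $T_{\Xi_{0}}=\i$ and $T_{(\one\tensor B)\Xi_{0}}=B$), it forms a matched pair of invariant vectors for $\overline{U}\tp U'$ and $\overline{U}\tp U$; hence these vectors witness the failure of both \prettyref{desc:prod_ergodic_bdd} and \prettyref{desc:prod_sim_ergodic} for $V=\overline{U}$. Conversely, a nonzero invariant vector $\Xi$ of the relevant type for some $V\tp U'$ is Hilbert--Schmidt, hence yields a nonzero compact operator whose modulus squared --- by the matching and $\tau_{i/2}$-domain hypotheses --- is $\tau$-analytic and commutes with $U$ (a projection commutes with $U'$ precisely when it commutes with $U$, as $\tau$ is a one-parameter automorphism group), so any nonzero spectral projection of it gives a nonzero finite-dimensional $U$-invariant subspace with invertible transpose, contradicting \prettyref{desc:no_CAP_vect}. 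This closes the cycle, with \prettyref{desc:prod_sim_ergodic} and \prettyref{desc:prod_ergodic_bdd} interleaved into it since a matched pair of invariant vectors for $V\tp U$ and $V\tp U'$ feeds the same operator-theoretic machine as a single $\tau_{i/2}$-admissible invariant vector for $V\tp U'$. The strict tower, \prettyref{desc:no_FDS}$\iff$(\prettyref{desc:prod_ergodic} for all $V$)$\iff$(\prettyref{desc:prod_ergodic} for $\overline{U}$), goes the same way but without the invertibility refinement: the finitely many components of \emph{any} finite-dimensional sub-representation automatically lie in $D(S)=D(\tau_{-i/2})$, so $\Xi_{0}$ remains invariant under $\overline{U}\tp U'$ and $\J\H_{0}\tensor\H_{0}$ always belongs to $\mathcal{BR}(\overline{U}\tp U')$ (with $Z=\overline{u}\tp u'$ there, a co-representation which need not be unitary), while a nonzero invariant vector of $V\tp U'$ still compresses $U$ to a nonzero finite-dimensional sub-representation.

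The implication \prettyref{desc:weak_mixing}$\implies$\prettyref{desc:no_CAP_vect} is the analogue of \prettyref{desc:classical_weak_mixing_1}$\implies$\prettyref{desc:classical_weak_mixing_3}: given a nonzero finite-dimensional sub-representation $u$ with invertible transpose on $\H_{0}$, pick an orthonormal basis $\z_{1},\dots,\z_{n}$; the vectors $\J\z_{i}\tensor\z_{i}$ lie in $\J\H_{0}\tensor\H_{0}\in\mathcal{BR}(\overline{U}\tp U')$, so \prettyref{desc:weak_mixing} applies to them, yet for small $\e$ neither alternative can hold --- the ``either'' fails because unitarity of $u$ gives $\sum_{j}|(\om_{\z_{i},\z_{j}}\tensor\i)(U)|^{2}=\one$ for each $i$, whence no state $\om$ can make all of the $\om(|(\om_{\z_{i},\z_{j}}\tensor\i)(U)|)$ smaller than $1/n$, and the ``or'' fails because $\tau_{i/2}$ is bounded on the finite-dimensional $\tau$-analytic span of the components of $u$. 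For part \ref{enu:wm_conditions_2}: \prettyref{desc:strict_weak_mixing}$\implies$\prettyref{desc:no_FDS} is the same argument with only the ``either'' alternative present, and it now applies to \emph{any} finite-dimensional sub-representation (using, as above, that $\J\H_{0}\tensor\H_{0}$ always lies in $\mathcal{BR}(\overline{U}\tp U')$) --- no analyticity is needed precisely because there is no ``or'' to be defeated --- so the identity $\sum_{j}|(\om_{\z_{i},\z_{j}}\tensor\i)(U)|^{2}=\one$ already yields the contradiction; \prettyref{desc:no_FDS}$\implies$(\prettyref{desc:prod_ergodic} for all $V$) is the contrapositive of the ``invariant vector $\implies$ finite-dimensional sub-representation'' half, and the last implication of part \ref{enu:wm_conditions_2} is trivial.

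For the amenable statements, the only genuinely new implications are \prettyref{desc:no_CAP_vect}$\implies$\prettyref{desc:weak_mixing_amenable} and \prettyref{desc:no_FDS}$\implies$\prettyref{desc:strict_weak_mixing_amenable}; the reverse directions follow from the \emph{a priori} weaker \prettyref{desc:weak_mixing_amenable}$\implies$\prettyref{desc:weak_mixing} and \prettyref{desc:strict_weak_mixing_amenable}$\implies$\prettyref{desc:strict_weak_mixing}, each a Day--Namioka-type convexity argument converting the ``$m(|\cdot|)=0$'' statements into honest states $\om\in\Lone{\G}$ making the $\om(|\cdot|)$ arbitrarily small. The core step --- the quantum form of ``ergodicity of the square forces weak mixing'' --- runs thus: if the ``or'' alternative of \prettyref{desc:weak_mixing_amenable} fails, the relevant slices of $U$ are uniformly $\tau_{i/2}$-bounded, and this, together with the membership of the $\z_{i}$ in a $\mathcal{BR}(\overline{U}\tp U')$-element, lets one apply the right-invariant mean $m$ slicewise along $U$ to build, from any $i,j$ with $m(|(\om_{\z_{i},\z_{j}}\tensor\i)(U)|)>0$, a nonzero invariant vector for $\overline{U}\tp U'$ meeting the hypotheses of \prettyref{desc:prod_ergodic_bdd}, which contradicts \prettyref{desc:no_CAP_vect} by part \ref{enu:wm_conditions_1}; the strict version is identical with \prettyref{desc:prod_ergodic}. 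Finally, when the scaling group is trivial, $U'=U$ and $\tau_{i/2}=\i$, so the ``or'' alternatives of \prettyref{desc:weak_mixing} and \prettyref{desc:weak_mixing_amenable} are defeated for all small $\e$ (matrix coefficients of a unitary co-representation have norm at most $1$), the space $\J\H\tensor\H$ itself lies in $\mathcal{BR}(\overline{U}\tp U')$ so the side condition on the $\z_{i}$ is vacuous, every finite-dimensional unitary co-representation has invertible transpose, and the ``matched'' and $\tau_{i/2}$-domain provisos become automatic; hence \prettyref{desc:weak_mixing} and \prettyref{desc:strict_weak_mixing} coincide, as do \prettyref{desc:prod_sim_ergodic}, \prettyref{desc:prod_ergodic_bdd} and \prettyref{desc:prod_ergodic}, and \prettyref{desc:no_CAP_vect} and \prettyref{desc:no_FDS}, and the two towers fuse. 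I expect the main obstacle to be this mean-dependent step: producing a genuine invariant vector from $m$ while keeping control of the unbounded co-representation $U'$, its domains, and the modular/scaling data --- phenomena invisible in the Kac-algebra case, and the reason the general characterizations must be stated in two parts and involve unbounded objects.
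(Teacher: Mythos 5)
Your overall architecture follows the paper's quite closely, and most of the individual steps are sound: defeating \prettyref{desc:weak_mixing} and \prettyref{desc:strict_weak_mixing} on a finite-dimensional sub-representation via $\sum_{j}\om(|u_{ji}|^{2})\geq\tfrac1n$ and the boundedness of $\tau_{i/2}$ on the analytic span; the Hilbert--Schmidt intertwiner argument for \prettyref{desc:no_CAP_vect}$\implies$\prettyref{desc:prod_sim_ergodic},\prettyref{desc:prod_ergodic_bdd} and \prettyref{desc:no_FDS}$\implies$\prettyref{desc:prod_ergodic}; and the mean-averaging step under amenability, including the verification via uniform $\tau_{i/2}$-bounds that the averaged vector meets the domain proviso of \prettyref{desc:prod_ergodic_bdd}. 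Your one real departure --- using the trace vector $\Xi_{0}=\sum_{i}\J e_{i}\tensor e_{i}$ and a deformation $(\one\tensor B)\Xi_{0}$ in place of the paper's Haar-state average of $\J\z\tensor\z$ over the compact quantum group generated by the components of $u$ --- is workable (condition \prettyref{eq:V_U'_invariant_vector} for $\Xi_{0}$ reduces to $U^{*}$ commuting with $p_{\H_{0}}\tensor\one$), but you assert rather than verify the invariance of $(\one\tensor B)\Xi_{0}$ under $\overline{U}\tp U$; making that precise forces $B=F^{1/2}$ and requires first reducing to irreducible $u$, exactly the computation the paper's averaging sidesteps.

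The genuine gap is in part (d). With trivial scaling group the theorem asserts, in particular, that (\prettyref{desc:prod_ergodic} for $V:=\overline{U}$) implies \prettyref{desc:strict_weak_mixing}, and your proposal never proves this. ``Fusing the towers'' only identifies \prettyref{desc:weak_mixing} with \prettyref{desc:strict_weak_mixing}, \prettyref{desc:prod_sim_ergodic} and \prettyref{desc:prod_ergodic_bdd} with \prettyref{desc:prod_ergodic}, and \prettyref{desc:no_CAP_vect} with \prettyref{desc:no_FDS}; combined with parts (a) and (b) this still leaves \prettyref{desc:strict_weak_mixing} merely \emph{implying} all the ergodicity conditions, with nothing implying it back. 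Your only mechanism for producing such a converse is the slicewise application of an invariant mean, which is unavailable here: a Kac algebra need not be amenable (take $\G=(\Linfty{G},\Delta)$ for a non-amenable locally compact group $G$). The paper closes this loop with a mean-free substitute: since $U'=U$, the operators $(\i\tensor\i\tensor\om)(\overline{U}\tp U)$, $\om$ a state in $\Lone{\G}$, form a semigroup of contractions, so the unique minimal-norm element of the closed convex set $K=\overline{\{((\i\tensor\i\tensor\om)(\overline{U}\tp U))\Theta\}}$ is an invariant vector, and the estimate \prettyref{eq:Xi_om__Theta} shows it is nonzero whenever \prettyref{desc:strict_weak_mixing} fails. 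Without this (or an equivalent fixed-point argument) part (d) remains unproved.
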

\begin{rem}

\begin{enumerate}
\item It is clear that \prettyref{desc:no_FDS}$\implies$\prettyref{desc:no_CAP_vect}$\implies$ergodicity.
When $\G$ is non-compact, mixing implies \prettyref{desc:no_FDS}:
every nontrivial finite-dimensional sub-representation has entries
in $\Cz{\G}$ by mixing, so its unitarity forces $\Cz{\G}$ to be
unital, a contradiction. When $\G$ is compact, every unitary co-representation
is trivially mixing, but never satisfies \prettyref{desc:no_CAP_vect}
by the general theory \citep{Woronowicz__symetries_quantiques}.
\item The advantage of \prettyref{desc:prod_sim_ergodic} over \prettyref{desc:prod_ergodic_bdd}
is that the former can be written without any reference to the scaling
group.
\item When $\G$ is a locally compact group, our definition of weak mixing
reduces to the classical one.
\item When $\G$ is discrete, \prettyref{desc:no_CAP_vect} is equivalent
to \prettyref{desc:no_FDS} by \prettyref{prop:DQG_invertible_transpose}.
\item Let $\HH$ be a closed quantum subgroup of $\G$ in the sense of Woronowicz
\citep[Definition 3.2 and Theorem 3.6]{Daws_Kasprzak_Skalski_Soltan__closed_q_subgroups_LCQGs},
and let $\pi:\CzU{\G}\to\CzU{\HH}$ be the associated surjective $*$-homomorphism.
It follows from \citep[Proposition 3.10 and Theorem 4.8]{Meyer_Roy_Woronowicz__hom_quant_grps}
that $R^{\mathrm{u},\HH}\circ\pi=\pi\circ R^{\mathrm{u},\G}$ and
$\tau_{t}^{\mathrm{u},\HH}\circ\pi=\pi\circ\tau_{t}^{\mathrm{u},\G}$
for every $t\in\R$. Assume that $\G,\HH$ are co-amenable. If $U$
is a unitary co-representation of $\G$ on $\H$, then its ``restriction
to $\HH$'' $U_{\HH}:=(\i\tensor\pi)(U)$ (view $\pi$ as a $\Cz{\G}\to\Cz{\HH}$
map) is a unitary co-representation of $\HH$ on $\H$. If either
of \prettyref{desc:no_FDS} or \prettyref{desc:no_CAP_vect} holds
for $U_{\HH}$, then it holds for $U$.
\end{enumerate}
\end{rem}
In the course of the proof of \prettyref{thm:wm_conditions}, which
is divided into several steps, we assume that $U$ is a unitary co-representation
on a Hilbert space $\H$ and fix an anti-unitary $\J:\H\to\J\H$.
Notice first that when $\G$ is amenable, \prettyref{desc:weak_mixing_amenable}$\implies$\prettyref{desc:weak_mixing}
and \prettyref{desc:strict_weak_mixing_amenable}$\implies$\prettyref{desc:strict_weak_mixing}.
Also, \prettyref{desc:no_FDS} and \prettyref{desc:no_CAP_vect} are
equivalent if the scaling group of $\G$ is trivial by \prettyref{sub:prelim_CAP}.

The following ``mean ergodic theorem'' is elementary.
\begin{lem}
\label{lem:mean_erg_thm}Let $V\in B(\H)\tensorn\Linfty{\HH}$ (resp.,
$V\in M(\mathbb{K}(\H)\tensormin C(\HH))$) be a co-representation
of an amenable LCQG (resp., a not necessarily reduced $C^{*}$-algebraic
compact quantum group \citep{Woronowicz__symetries_quantiques,Maes_van_Daele__notes_CQGs})
$\HH$ on a Hilbert space $\H$. Then $(\i\tensor m)(V)$ is an idempotent
whose image consists of all vectors invariant under $V$, where $m$
is a left-invariant mean for (resp., the Haar state of) $\HH$.\end{lem}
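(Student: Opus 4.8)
The plan is to derive the whole statement from a single operator identity. Set $P:=(\i\tensor m)(V)$, which lies in $B(\H)$ because the slice map $\i\tensor m$ is well defined and completely positive into $B(\H)$ (resp.\ into $M(\mathbb{K}(\H))=B(\H)$) even though the state $m$ on $\Linfty{\HH}$ (resp.\ on the unital $C^*$-algebra $C(\HH)$) need not be normal. The one real step is to prove
\[
V(P\tensor\one)=P\tensor\one. \qquad(\star)
\]
To get $(\star)$ I would first record the consequence $(\i\tensor m)(\Delta(x))=m(x)\one$ of invariance of $m$: in the compact case this is precisely the invariance property of the Haar state, while in general, pairing the left-hand side with an arbitrary normal $\om\in\Lone{\HH}$ and using the Fubini relation $\om((\i\tensor m)(y))=m((\om\tensor\i)(y))=(\om\tensor m)(y)$ turns it into $m((\om\tensor\i)(\Delta(x)))=\om(\one)m(x)$, which is exactly left-invariance, so the claim follows since $\Lone{\HH}$ separates $\Linfty{\HH}$ — note that only left-invariance of $m$ is used here. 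Now slice the co-representation identity $(\i\tensor\Delta)(V)=V_{12}V_{13}$ in the third leg by $m$: on the left $\i\tensor m$ collapses $\Delta$ to $m(\cdot)\one$ by the previous sentence, giving $(\i\tensor m)(V)\tensor\one=P\tensor\one$; on the right $V_{12}$ is constant in the third leg, and slicing the third leg of $V_{13}$ returns $(\i\tensor m)(V)=P$ in the first leg, so one obtains $V(P\tensor\one)$. This is $(\star)$.

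The two assertions then fall out. Applying $\i\tensor m$ to $(\star)$ gives $P^2=(\i\tensor m)\bigl(V(P\tensor\one)\bigr)=(\i\tensor m)(P\tensor\one)=m(\one)P=P$, so $P$ is idempotent and $\Img P=\ker(\i-P)$ equals the set of $P$-fixed vectors. If $\z=P\eta\in\Img P$, then slicing the first leg of $(\star)$ against $\om_{\eta,\xi}$ and using $(\om_{\eta,\xi}\tensor\i)\bigl(V(P\tensor\one)\bigr)=(\om_{P\eta,\xi}\tensor\i)(V)=(\om_{\z,\xi}\tensor\i)(V)$ yields $(\om_{\z,\xi}\tensor\i)(V)=\langle\z,\xi\rangle\one$ for every $\xi\in\H$, which is exactly the assertion that $\z$ is invariant under $V$ (invariance of $\z$, i.e.\ $V(p_\z\tensor\one)=p_\z\tensor\one$, is routinely reformulated as $(\om_{\z,\xi}\tensor\i)(V)=\langle\z,\xi\rangle\one$ for all $\xi$). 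Conversely, if $\z$ is invariant then $(\om_{\z,\eta}\tensor\i)(V)=\langle\z,\eta\rangle\one$ for all $\eta$, whence $\langle P\z,\eta\rangle=m\bigl((\om_{\z,\eta}\tensor\i)(V)\bigr)=\langle\z,\eta\rangle m(\one)=\langle\z,\eta\rangle$, so $P\z=\z\in\Img P$. Thus $\Img P$ is precisely the space of $V$-invariant vectors.

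The only thing requiring genuine care — the closest this proof comes to an obstacle — is the slice-map bookkeeping for the non-normal mean $m$: one needs the Fubini identity above for normal functionals on the first leg, the fact that the slice $\i\tensor\i\tensor m$ is a bimodule map over the first two legs (so that it factors through $V_{12}$ as used in the proof of $(\star)$), and, in the compact non-reduced case, the extension of these slice maps to the relevant multiplier algebras. All of this is standard, which is why the lemma is elementary; its content is just $(\star)$, the quantum-group incarnation of the classical fact that the mean of a unitary representation is the idempotent onto its invariant vectors.
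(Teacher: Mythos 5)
Your proof is correct; the paper states this lemma without proof (calling it ``elementary''), and your argument --- slicing $(\i\tensor\Delta)(V)=V_{12}V_{13}$ by $m$ in the last leg to obtain $V(P\tensor\one)=P\tensor\one$, then reading off idempotence and the identification of the image of $P$ with the invariant vectors --- is exactly the standard argument being alluded to. The slice-map bookkeeping you flag (the Fubini identity for the non-normal $m$, the bimodule property over the first two legs, and the strict extension to multiplier algebras in the compact non-reduced case) is indeed the only point requiring care, and you handle it correctly; note also that your argument nowhere uses unitarity of $V$, which matters since the lemma is later applied to bounded non-unitary co-representations.
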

\begin{prop}
\prettyref{desc:weak_mixing} $\lor$ (\prettyref{desc:prod_sim_ergodic}
with $V:=\overline{U}$) $\lor$ (\prettyref{desc:prod_ergodic_bdd}
with $V:=\overline{U}$) $\implies$\prettyref{desc:no_CAP_vect},
and \prettyref{desc:strict_weak_mixing}$\implies$\prettyref{desc:no_FDS}.\end{prop}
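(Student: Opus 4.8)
We argue both implications by contraposition, and they share one elementary estimate. Assume first that \prettyref{desc:no_CAP_vect} fails: $U$ acts on a Hilbert space $\H$ and has a nonzero finite‑dimensional sub‑representation $u$ on an invariant subspace $\H_{0}\subseteq\H$ with invertible transpose. By \prettyref{sub:prelim_CAP} and \prettyref{lem:compactification_antipode}, the $C^{*}$‑algebra generated by the matrix coefficients of $u$ is (with the restriction of $\Delta$) a compact quantum group $\HH$ whose antipode, unitary antipode and scaling group restrict those of $\G$; hence the coefficients of $u$ are analytic for $\tau$, and there is a positive invertible $F\in B(\H_{0})$ with $(\i\tensor\tau_{t})(u)=(F^{it}\tensor\one)u(F^{-it}\tensor\one)$ for all $t$. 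Fix an orthonormal basis $(e_{a})$ of $\H_{0}$ diagonalising $F$, write $u=\sum_{ij}e_{ij}\tensor u_{ij}$ in it, and take $\z_{i}:=e_{i}$ in the conditions of \prettyref{def:weak_mixing}–\prettyref{def:strict_weak_mixing}. The admissibility clause there is met: $\mathcal{F}_{0}:=\J\H_{0}\tensor\H_{0}$ lies in $\mathcal{BR}(\overline{U}\tp U')$, since $U_{23}(p_{\mathcal{F}_{0}}\tensor\one)$ is the padded $u_{23}$ cut down by $(p_{\J\H_{0}})_{1}$, whose legs‑$(1,2)$ slices are combinations of the $\tau$‑analytic $u_{ij}$; thus this operator lies in $D(\i\tensor\i\tensor\tau_{-i/2})$ and the resulting $Z$ equals $\overline{U}(p_{\J\H_{0}}\tensor\one)\tp(\i\tensor\tau_{-i/2})(u)$, a bounded tensor product of unitary co‑representations commuting with $p_{\mathcal{F}_{0}}\tensor\one$.

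The key to \prettyref{desc:strict_weak_mixing} and the first alternative of \prettyref{desc:weak_mixing} is the identity $\sum_{i,j}|(\om_{e_{i},e_{j}}\tensor\i)(U)|^{2}=\sum_{i,j}u_{ji}^{*}u_{ji}=n\one$ with $n=\dim\H_{0}$, immediate from $u^{*}u=\one$. For any state $\om\in\Lone{\G}$ this forces $\om(|(\om_{e_{i},e_{j}}\tensor\i)(U)|^{2})\ge 1/n$ for some $i,j$, whence $\om(|(\om_{e_{i},e_{j}}\tensor\i)(U)|)\ge 1/n$ since $\|(\om_{e_{i},e_{j}}\tensor\i)(U)\|\le 1$; so no state works once $\e<1/n$. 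As this uses only unitarity of $u$, the same reasoning applied to an arbitrary nonzero finite‑dimensional sub‑representation of $U$ yields \prettyref{desc:strict_weak_mixing}$\implies$\prettyref{desc:no_FDS}. For \prettyref{desc:weak_mixing} one also kills the second alternative: for norm‑one $\rho_{1},\rho_{2}\in\Lone{\G}$ the vectors $\xi_{1}=(\i\tensor\rho_{1})(U^{*})e_{i}$, $\xi_{2}=(\i\tensor\rho_{2})(U^{*})e_{j}$ lie in $\H_{0}$ with norm $\le 1$, and $\tau_{i/2}[(\om_{\xi_{1},\xi_{2}}\tensor\i)(U)]=(\om_{F^{1/2}\xi_{1},F^{-1/2}\xi_{2}}\tensor\i)(u)$ has norm $\le\|F^{1/2}\|\,\|F^{-1/2}\|=:C$; choosing $\e<\min(1/n,1/C)$ defeats both alternatives.

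It remains to contradict \prettyref{desc:prod_sim_ergodic} and \prettyref{desc:prod_ergodic_bdd} with $V:=\overline{U}$. From $\overline{U}(p_{\J\H_{0}}\tensor\one)=(j\tensor R)(u)$, together with the agreement $R=R^{\HH}$ on $\mathcal{A}$ and $S^{\HH}(u_{ij})=u_{ji}^{*}$, one computes that in the basis $(\J e_{a})$ the co‑representation $\overline{U}(p_{\J\H_{0}}\tensor\one)$ has matrix $(f_{i}^{1/2}f_{j}^{-1/2}u_{ij}^{*})$; its unitarity then gives $\sum_{k}f_{k}^{-1}u_{ik}^{*}u_{jk}=\delta_{ij}f_{i}^{-1}\one$. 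Using this and $(\i\tensor\tau_{-i/2})(u)=(F^{1/2}\tensor\one)u(F^{-1/2}\tensor\one)$, a direct check shows that $\Xi:=\sum_{a}\J(F^{-1/2}e_{a})\tensor e_{a}$ is invariant under $\overline{U}\tp U$ and that $\Upsilon:=(\one\tensor F^{1/2})\Xi=\sum_{a}\J e_{a}\tensor e_{a}$ is invariant under $\overline{U}\tp U'$ (both well posed, as $\C\Xi,\C\Upsilon\subseteq\mathcal{F}_{0}\in\mathcal{BR}(\overline{U}\tp U')$; see \prettyref{def:BR} and \prettyref{lem:V_U'_invariant_vector}). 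Both are nonzero; since $T_{\Xi}=F^{-1/2}p_{\H_{0}}$, the operator $F^{1/2}$ (extended by the identity on $\H_{0}^{\perp}$) is injective, self‑adjoint and commutes with $T_{\Xi}T_{\Xi}^{*}=F^{-1}p_{\H_{0}}$, so $\Xi$ and $\Upsilon$ are matched by the remark after \prettyref{def:matched_vectors}; hence \prettyref{desc:prod_sim_ergodic} fails for $V=\overline{U}$. Finally $\Upsilon$ also defeats \prettyref{desc:prod_ergodic_bdd} for $V=\overline{U}$: for $\z,\xi\in\J\H$ the vectors $(\z^{*}\tensor\one)\Upsilon$, $(\xi^{*}\tensor\one)\Upsilon$ lie in $\H_{0}$, so $(\om_{(\z^{*}\tensor\one)\Upsilon,(\xi^{*}\tensor\one)\Upsilon}\tensor\i)(U)$ is a combination of the $\tau$‑analytic $u_{ij}$ and thus lies in $D(\tau_{i/2})$.

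The delicate part is the bookkeeping for the conjugate $\overline{U}(p_{\J\H_{0}}\tensor\one)$: computing its matrix coefficients through the unitary antipode and identifying the correct powers of $F$ in $\Xi$ and $\Upsilon$ (the naive $\sum_{a}\J e_{a}\tensor e_{a}$ is invariant under $\overline{U}\tp U'$ but \emph{not} under $\overline{U}\tp U$), all the while checking that $\mathcal{F}_{0}\in\mathcal{BR}(\overline{U}\tp U')$ so that invariance under $\overline{U}\tp U'$ is meaningful in the sense of \prettyref{def:BR}. No (co‑)amenability of $\G$ is used.
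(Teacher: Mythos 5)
Your proof is correct, and the first two parts (defeating \prettyref{desc:strict_weak_mixing} via $\sum_{i,j}|u_{ji}|^{2}=n\one$, and killing the second alternative of \prettyref{desc:weak_mixing} by the uniform bound $\|F^{1/2}\|\|F^{-1/2}\|$ on $\tau_{i/2}$ of coefficients of $u$) coincide with the paper's argument. Where you genuinely diverge is in defeating \prettyref{desc:prod_sim_ergodic} and \prettyref{desc:prod_ergodic_bdd}: the paper reduces to irreducible $u$, passes to the compact quantum group $\HH$ generated by its coefficients, produces the invariant vectors by averaging $\J\z\tensor\z$ against the Haar state $h$ via the mean ergodic theorem (\prettyref{lem:mean_erg_thm}), proves non-vanishing using Woronowicz's positivity $h(x^{*}x)>0$, and checks matching through the $\tau$-invariance of $h$. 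You instead exhibit the invariant vectors explicitly ($\Upsilon=\sum_{a}\J e_{a}\tensor e_{a}$ for $\overline{U}\tp U'$ and $(\one\tensor F^{-1/2})\Upsilon$ for $\overline{U}\tp U$) and verify invariance directly from \prettyref{lem:U'_left_leg} and \prettyref{lem:V_U'_invariant_vector}; I checked the slice computations and they go through with your sign convention for $F$. This buys you something real: non-vanishing is trivial, matching is immediate from $T_{\Xi}=F^{-1/2}p_{\H_{0}}$ and $T_{\Upsilon}=p_{\H_{0}}$, and you need neither irreducibility nor the Haar state; the cost is that the "direct check" is exactly the delicate bookkeeping you flag, and a reader must redo it. One small repair: for \prettyref{desc:strict_weak_mixing}$\implies$\prettyref{desc:no_FDS} the sub-representation $u$ need not have invertible transpose, so your justification that $\J\H_{0}\tensor\H_{0}\in\mathcal{BR}(\overline{U}\tp U')$ via $\tau$-analyticity of the $u_{ij}$ is not available there; instead use \prettyref{eq:antipode_rep}, which puts all slices $(\om\tensor\i)(U)$ in $D(S)=D(\tau_{-i/2})$ unconditionally, so $U_{23}(p_{\mathcal{F}_{0}}\tensor\one)\in D(\i\tensor\i\tensor\tau_{-i/2})$ for any finite-dimensional invariant $\H_{0}$.
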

\begin{proof}
Assume first that \prettyref{desc:no_FDS} does not hold, and let
$\H_{0}$ be a nonzero finite-dimensional subspace of $\H$ that is
invariant under $U$. Denote by $u\in B(\H_{0})\tensor\Linfty{\G}$
the ambient sub-representation of $U$. One checks that $\H_{0},\J\H_{0}$
are invariant under $U',\overline{U}$, respectively, where the invariance
under $U'$ means that $(\i\tensor\tau_{-i/2})(U(p_{\H_{0}}\tensor\one))$
commutes with $p_{\H_{0}}\tensor\one$. Let $u',\overline{u}$ be
the corresponding (bounded, the former generally not unitary) sub-representations.
We have $\J\H_{0}\tensor\H_{0}\in\mathcal{BR}(\overline{U}\tp U')$.
Let $\z_{1},\ldots,\z_{n}$ be an orthonormal basis of $\H_{0}$.
Write $u=\left(u_{ij}\right)_{i,j=1}^{n}$ with respect to this basis,
that is, $u_{ji}=(\om_{\z_{i},\z_{j}}\tensor\i)(U)$ for every $1\leq i,j\leq n$.
Since $u$ is unitary, for every state $\om\in\Lone{\G}$ there exist
$1\leq i,j\leq n$ such that $\om(\left|u_{ij}\right|)\geq\om(\left|u_{ij}\right|^{2})\geq\frac{1}{n}$.
Hence, \prettyref{desc:strict_weak_mixing} fails.

Suppose henceforth that \prettyref{desc:no_CAP_vect} does not hold.
So we can assume that $u$ has an invertible transpose, or equivalently,
that $u\in D(\i\tensor\tau_{i/2})$ (\prettyref{sub:prelim_CAP}).
As $u$ is finite dimensional, there is $M<\infty$ such that for
all $1\leq i,j\leq n$ and $\rho_{1},\rho_{2}\in\Lone{\G}$ of norm
$1$, $y:=(\om_{(\i\tensor\rho_{1})(U^{*})\z_{i},(\i\tensor\rho_{2})(U^{*})\z_{j}}\tensor\i)(U)=(\om_{(\i\tensor\rho_{1})(u^{*})\z_{i},(\i\tensor\rho_{2})(u^{*})\z_{j}}\tensor\i)(u)$
satisfies $y\in D(\tau_{i/2})$ and $\left\Vert \tau_{i/2}(y)\right\Vert \leq M$.
In conclusion, \prettyref{desc:weak_mixing} fails.

Denote by $\HH$ the $C^{*}$-algebraic compact quantum group induced
by $u$ as in \prettyref{sub:prelim_CAP} and by $h$ the Haar state
on $\HH$. Notice that $h$ is not necessarily faithful because $\HH$
is not necessarily reduced. Nonetheless, there is no ambiguity about
the scaling group and unitary antipode by \prettyref{lem:compactification_antipode}.
The components of $u$ belong to $C(\HH)$ by definition, so $u,u',\overline{u}$
are co-representations of $\HH$.

We may and do assume that $u$ is irreducible. Hence there exists
a strictly positive matrix $F\in M_{\dim\H_{0}}$, which we view as
an operator on $\H_{0}$, such that $(\i\tensor\tau_{z})(u)=(F^{-iz}\tensor\one)u(F^{iz}\tensor\one)$
for every $z\in\C$ \citep[Section 7]{Woronowicz__symetries_quantiques}. 

Since $\overline{u}_{13}u_{23}'$ is a co-representation of $\HH$
on $\J\H_{0}\tensor\H_{0}$, every vector in the image of $(\i\tensor\i\tensor h)(\overline{u}_{13}u_{23}')$
is invariant under $\overline{u}_{13}u_{23}'$ by \prettyref{lem:mean_erg_thm},
thus under $\overline{U}\tp U'$ (see \prettyref{lem:V_U'_invariant_vector}).
Fix $0\neq\z\in\H_{0}$, and consider the invariant vector $\Xi:=((\i\tensor\i\tensor h)(\overline{u}_{13}u_{23}'))(\J\z\tensor\z)$.
To show that it is nonzero, let $\eta\in\H_{0}$ be such that $x:=(\om_{\eta,\z}\tensor\i)(u)\neq0$.
From \prettyref{lem:U'_left_leg}, \prettyref{eq:U'_left_leg__1}
we obtain 
\[
\begin{split}\left\langle \Xi,\J\eta\tensor\eta\right\rangle  & =\left\langle ((\i\tensor\i\tensor h)(\overline{u}_{13}u_{23}'))(\J\z\tensor\z),\J\eta\tensor\eta\right\rangle \\
 & =h\left[(\om_{\J\z,\J\eta}\tensor\i)(\overline{u})(\om_{\z,\eta}\tensor\i)(u')\right]=h(R(x^{*}x))=h(x^{*}x).
\end{split}
\]
By \citep[Proposition 4.2]{Woronowicz__symetries_quantiques}, this
number is strictly positive. Therefore $\Xi\neq0$, and (\prettyref{desc:prod_ergodic_bdd}
with $V:=\overline{U}$) fails.

Similarly, $\overline{u}_{13}u_{23}$ is also a co-representation
of $\HH$ on $\J\H_{0}\tensor\H_{0}$. The vector $\Upsilon:=((\i\tensor\i\tensor h)(\overline{u}_{13}u_{23}))(\J\z\tensor F^{1/2}\z)$
is invariant under $\overline{u}_{13}u_{23}$, thus under $\overline{U}\tp U$,
by \prettyref{lem:mean_erg_thm}, and 
\[
\begin{split}(\i\tensor F^{-1/2})\Upsilon & =((\i\tensor\i\tensor h)(\overline{u}_{13}(\one\tensor F^{-1/2}\tensor\one)u_{23}(\one\tensor F^{1/2}\tensor\one)))(\J\z\tensor\z)\\
 & =((\i\tensor\i\tensor h)(\overline{u}_{13}u_{23}'))(\J\z\tensor\z)=\Xi.
\end{split}
\]
Defining $T_{\Xi}\in B(\H_{0})$ by $T_{\Xi}\xi:=((\J\xi)^{*}\tensor\i)(\Xi)$,
$\xi\in\H_{0}$, we get as above 
\[
\left\langle T_{\Xi}\xi,\xi'\right\rangle =h\left((\om_{\xi',\z}\tensor\i)(u)^{*}(\om_{\xi,\z}\tensor\i)(u)\right)
\]
for all $\xi,\xi'\in\H_{0}$. Thus 
\[
\begin{split}\left\langle T_{\Xi}F^{-1/2}\xi,F^{1/2}\xi'\right\rangle  & =h\left((\om_{\xi',\z}\tensor\i)(u(F^{1/2}\tensor\one))^{*}(\om_{\xi,\z}\tensor\i)(u(F^{-1/2}\tensor\one))\right)\\
 & =h\left((\om_{\xi',F^{1/2}\z}\tensor\i)((\i\tensor\tau_{-i/2})(u))^{*}(\om_{\xi,F^{-1/2}\z}\tensor\i)((\i\tensor\tau_{i/2})(u))\right).
\end{split}
\]
Taking the original $\z$ to be an eigenvector of $F$ and using that
$h$ is invariant under $\tau$, we conclude that $\left\langle T_{\Xi}F^{-1/2}\xi,F^{1/2}\xi'\right\rangle =\left\langle T_{\Xi}\xi,\xi'\right\rangle $,
namely that $T_{\Xi}$ and $F^{1/2}$ commute. Thus, defining $B\in B(\H)$
to be $F^{1/2}$ on $\H_{0}$ and the identity on $\H_{0}^{\perp}$,
we infer that $\Xi,\Upsilon$ are matched, and (\prettyref{desc:prod_sim_ergodic}
with $V:=\overline{U}$) fails.
\end{proof}
In the particular case of $\G$ with a trivial scaling group (where
$U'=U$), the next result was proved by Kyed and So{\ldash}tan \citep[Theorem 2.6]{Kyed_Soltan__prop_T_exotic_QG_norms},
Das and Daws \citep[Proposition 7.2]{Das_Daws__quantum_Eberlein}
and Chen and Ng \citep[Proposition 3.5]{Chen_Ng__prop_T_LCQGs}.
\begin{prop}
\label{prop:prod_CAP}For every unitary co-representation $V$ of
$\G$, \prettyref{desc:no_CAP_vect}$\implies$\prettyref{desc:prod_sim_ergodic},\prettyref{desc:prod_ergodic_bdd}
and \prettyref{desc:no_FDS}$\implies$\prettyref{desc:prod_ergodic}.\end{prop}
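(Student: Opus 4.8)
The plan is to prove the contrapositives. Fix a unitary co-representation $V$ of $\G$ on a Hilbert space $\K$, and suppose one of \prettyref{desc:prod_sim_ergodic}, \prettyref{desc:prod_ergodic_bdd}, \prettyref{desc:prod_ergodic} fails for this $V$; in each case I will produce a nonzero finite-dimensional sub-representation of $U$, with an invertible transpose in the first two cases, thereby contradicting \prettyref{desc:no_CAP_vect} (resp.\ \prettyref{desc:no_FDS}).

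First I would extract from the failure a nonzero invariant vector $\Xi\in\K\tensor\H$ of $V\tp U'$: for \prettyref{desc:prod_ergodic} this is immediate; for \prettyref{desc:prod_ergodic_bdd} one gets in addition that $(\om_{a,b}\tensor\i)(U)\in D(\tau_{i/2})$ for all $a,b$ in the closed subspace $\H_{1}:=\clinspan\{(\z^{*}\tensor\one)(\Xi):\z\in\K\}$ of $\H$; and for \prettyref{desc:prod_sim_ergodic} one gets, besides $\Xi$, a matched nonzero $\Upsilon\in\K\tensor\H$ invariant under $V\tp U$. Associate to $\Xi$ the conjugate-linear Hilbert--Schmidt operator $A\colon\K\to\H$, $A\z:=(\z^{*}\tensor\one)(\Xi)$; then $P:=AA^{*}$ is a nonzero positive compact operator on $\H$ with range contained in $\H_{1}$. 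The core of the argument is to feed \eqref{eq:V_U'_invariant_vector} of \prettyref{lem:V_U'_invariant_vector}, rewritten by means of \prettyref{lem:U'_left_leg} and the formulas $R(x)=\hat{J}x^{*}\hat{J}$, $\tau_{t}(x)=\hat{\nabla}^{it}x\hat{\nabla}^{-it}$ ($x\in\Linfty{\G}$), into the statement that each spectral projection $p_{\H_{0}}$ of $P$ for a nonzero eigenvalue -- necessarily finite rank with $\H_{0}\subseteq\H_{1}$ -- satisfies $U(p_{\H_{0}}\tensor\one)=(p_{\H_{0}}\tensor\one)U(p_{\H_{0}}\tensor\one)$, so that $\H_{0}$ carries a nonzero finite-dimensional sub-representation $u\leq U$. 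As $P\neq0$ some such $\H_{0}$ is nonzero, whence \prettyref{desc:no_FDS} fails; this proves \prettyref{desc:no_FDS}$\implies$\prettyref{desc:prod_ergodic} for every $V$.

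For \prettyref{desc:no_CAP_vect}$\implies$\prettyref{desc:prod_sim_ergodic},\prettyref{desc:prod_ergodic_bdd} it then remains to upgrade $u$ to a sub-representation with an invertible transpose, i.e.\ (by \prettyref{sub:prelim_CAP}) to show $u\in D(\i\tensor\tau_{i/2})$. Under \prettyref{desc:prod_ergodic_bdd} this is built in: the matrix entries of $u$ are of the form $(\om_{a,b}\tensor\i)(U)$ with $a,b\in\H_{0}\subseteq\H_{1}$, and by the extra hypothesis on $\Xi$ these all lie in $D(\tau_{i/2})$. Under \prettyref{desc:prod_sim_ergodic}, the matched companion $\Upsilon$ gives, by the discussion following \prettyref{def:matched_vectors}, $\Upsilon=(\one\tensor B)\Xi$ for an injective positive selfadjoint operator $B$ over $\H$ commuting with $P$, hence restricting to a positive invertible operator $F$ on each $\H_{0}$; running the same construction with $\Upsilon$ (which is invariant under the \emph{bounded} co-representation $V\tp U$) and comparing with the one coming from $\Xi$ recovers, on $\H_{0}$, the identity $(\i\tensor\tau_{z})(u)=(F^{-iz}\tensor\one)u(F^{iz}\tensor\one)$ ($z\in\C$) of Woronowicz's theory, so that $u$ is $\tau$-analytic and $u^{\t}$ is invertible. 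In both cases \prettyref{desc:no_CAP_vect} fails.

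I expect the main obstacle to be the bookkeeping forced by the unbounded scaling group. When $\tau\neq\i$, $U'$ is a genuinely unbounded object, so the implication ``$A$ relates $V$ to $U'$ $\implies\H_{0}$ is invariant under $U$'' is not formal: one must run the argument through the closed operator $\i\tensor\i\tensor\tau_{-i/2}$, approximating $A$ (equivalently $P$) by finite-rank operators and invoking closedness exactly as in the proof of \prettyref{lem:V_U'_invariant_vector}, and verify along the way that $U(p_{\H_{0}}\tensor\one)$ genuinely lies in $D(\i\tensor\tau_{-i/2})$. The ``matched'' formulation in \prettyref{desc:prod_sim_ergodic} and the domain condition in \prettyref{desc:prod_ergodic_bdd} are there precisely to furnish the analyticity data that a bare invariant vector of $V\tp U'$ does not carry, which is what is needed to turn $u$ into an admissible sub-representation. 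When the scaling group of $\G$ is trivial one has $U'=U$, $P$ commutes with $U$ with no further hypotheses, and the whole argument collapses to the statements of Kyed--So{\ldash}tan, Das--Daws and Chen--Ng recalled just before \prettyref{prop:prod_CAP}.
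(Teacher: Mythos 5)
Your outline reproduces the paper's strategy (contraposition; invariant vector $\Xi$; Hilbert--Schmidt intertwiner; spectral subspaces of the compact positive operator; then the matched vector or the domain condition to make the transpose invertible), but the step you yourself single out as the crux --- passing from the formal relation between $V$ and the unbounded object $U'$ to a genuine bounded intertwining of $V$ and $U$ --- is exactly the step you do not supply, and the workaround you sketch is not the right mechanism. The paper's resolution is algebraic, not topological: it first replaces $V$ by $\overline{V}$ and uses \prettyref{lem:U'_left_leg}, equation \prettyref{eq:U'_left_leg__2} (applied to $V$), to rewrite the slice of $\overline{V}^{*}$ appearing in \prettyref{eq:V_U'_invariant_vector} as a slice of $V'$. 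The invariance relation then reads $(\om_{T_{\Xi}\z,\xi}\tensor\i)(U')=(\om_{\z,T_{\Xi}^{*}\xi}\tensor\i)(V')$, i.e.\ \emph{both} sides are values of $\tau_{-i/2}$ on bounded slices of $U$ and $V$ respectively, and the injectivity of $\tau_{-i/2}$ strips the primes at once, giving $U(T_{\Xi}\tensor\one)=(T_{\Xi}\tensor\one)V$ as an honest identity of bounded operators. No approximation by finite-rank operators, no appeal to closedness of $\i\tensor\i\tensor\tau_{-i/2}$, and no verification that $U(p_{\H_{0}}\tensor\one)\in D(\i\tensor\tau_{-i/2})$ is needed for the \prettyref{desc:no_FDS}$\implies$\prettyref{desc:prod_ergodic} part. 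Your proposed route (mimicking the approximation in \prettyref{lem:V_U'_invariant_vector}) would in any case still have to invoke injectivity of $\tau_{-i/2}$ at the end, which you never mention; as written, the closedness argument alone does not produce the unprimed relation.

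Two smaller points. In the \prettyref{desc:prod_sim_ergodic} case you aim to reconstruct the full Woronowicz identity $(\i\tensor\tau_{z})(u)=(F^{-iz}\tensor\one)u(F^{iz}\tensor\one)$; this is more than is needed and more than the matched hypothesis obviously yields (the operator $B$ from \prettyref{lem:abc} is injective selfadjoint, not positive, so $F^{iz}$ need not even make sense). The paper instead uses $T_{\Upsilon}=BT_{\Xi}$ only to write each matrix entry $(\om_{\xi_{1},\xi_{2}}\tensor\i)(U)$ of $u$ as $\tau_{-i/2}$ of a bounded slice of $V$, whence $u\in D(\i\tensor\tau_{i/2})$ directly. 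In the \prettyref{desc:prod_ergodic_bdd} case, your passage to the \emph{closed} span $\H_{1}$ is unjustified (membership in $D(\tau_{i/2})$ does not pass to limits without uniform bounds) but also unnecessary: the spectral subspace $\H_{0}$ for a nonzero eigenvalue of $T_{\Xi}T_{\Xi}^{*}$ lies in the actual range of $T_{\Xi}$, so the hypothesis of \prettyref{desc:prod_ergodic_bdd} applies verbatim to the entries of $u$.
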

\begin{proof}
Let us replace $V$ by $\overline{V}$ for convenience. Fix an anti-unitary
$\J_{V}:\K\to\J_{V}\K$. Assume that \prettyref{desc:prod_ergodic}
does not hold, and let $\Xi\in\J_{V}\K\tensor\H$ be a nonzero invariant
vector for $\overline{V}\tp U'$ in the sense of \prettyref{def:BR}.
It induces a Hilbert--Schmidt operator $T_{\Xi}:\K\to\H$ given by
$T_{\Xi}\z:=((\J_{V}\z)^{*}\tensor\one)(\Xi)$, $\z\in\K$. For all
$\z\in\K$ and $\xi\in\H$, by invariance of $\Xi$ under $\overline{V}\tp U'$
(replacing $V$ by $\overline{V}$ in \prettyref{lem:V_U'_invariant_vector})
and \prettyref{lem:U'_left_leg}, \prettyref{eq:U'_left_leg__2} with
$V$ instead of $U$, 
\begin{equation}
(\om_{T_{\Xi}\z,\xi}\tensor\i)(U')=(\om_{(\one\tensor\xi^{*})(\Xi),\J_{V}\z}\tensor\i)(\overline{V}^{*})=(\om_{\z,T_{\Xi}^{*}\xi}\tensor\i)(V').\label{eq:prod_CAP__U'_V'}
\end{equation}
Formally, this means that $U'(T_{\Xi}\tensor\one)=(T_{\Xi}\tensor\one)V'$.
Since $U'=(\i\tensor\tau_{-i/2})(U)$ and the same for $V$, and since
$\tau_{-i/2}$ is injective, \prettyref{eq:prod_CAP__U'_V'} holds
with $U,V$ in place of $U',V'$, respectively, thus $U(T_{\Xi}\tensor\one)=(T_{\Xi}\tensor\one)V$.
Hence, the compact, nonzero, positive operator $T_{\Xi}T_{\Xi}^{*}$
over $\H$ intertwines $U$ with itself. Fixing a strictly positive
eigenvalue, the associated finite-dimensional spectral subspace $\H_{0}$
of $T_{\Xi}T_{\Xi}^{*}$ is invariant under $U$, so that \prettyref{desc:no_FDS}
does not hold. 

We should now show that under additional assumptions, the restriction
$u$ of $U$ to $\H_{0}$ has an invertible transpose. If \prettyref{desc:prod_sim_ergodic}
fails, then there is $\Upsilon\in\J_{V}\K\tensor\H$ invariant under
$\overline{V}\tp U$ such that $\Xi,\Upsilon$ are matched. As above,
the induced Hilbert--Schmidt operator $T_{\Upsilon}:\K\to\H$ given
by $T_{\Upsilon}\z:=((\J_{V}\z)^{*}\tensor\one)(\Upsilon)$, $\z\in\K$,
satisfies, for all $\z\in\K$ and $\xi\in\H$,
\begin{equation}
(\om_{T_{\Upsilon}\z,\xi}\tensor\i)(U)=(\om_{(\one\tensor\xi^{*})(\Upsilon),\J_{V}\z}\tensor\i)(\overline{V}^{*})=(\om_{\z,T_{\Upsilon}^{*}\xi}\tensor\i)(V')\label{eq:U__V_tag}
\end{equation}
using \prettyref{lem:U'_left_leg}, \prettyref{eq:U'_left_leg__2}
with $V$ instead of $U$. Thus, formally, $U(T_{\Upsilon}\tensor\one)=(T_{\Upsilon}\tensor\one)V'$.
As $\Xi,\Upsilon$ are matched, there is, by \prettyref{lem:abc},
a generally unbounded, injective, selfadjoint operator $B$ over $\H$
commuting with $T_{\Xi}T_{\Xi}^{*}$ such that $T_{\Upsilon}=BT_{\Xi}$
(see the paragraph succeeding \prettyref{def:matched_vectors}). Consequently,
$B$ maps $\H_{0}$ onto itself. If now $\xi_{1},\xi_{2}\in\H_{0}$
and $\z\in\K$ is such that $T_{\Xi}\z=B^{-1}\xi_{1}$, then from
\prettyref{eq:U__V_tag} we deduce that 
\[
(\om_{\xi_{1},\xi_{2}}\tensor\i)(U)=(\om_{\z,T_{\Upsilon}^{*}\xi_{2}}\tensor\i)(V')=\tau_{-i/2}[(\om_{\z,T_{\Upsilon}^{*}\xi_{2}}\tensor\i)(V)]\in D(\tau_{i/2}).
\]
Consequently, $u\in D(\i\tensor\tau_{i/2})$, that is, $u^{\t}$ is
invertible (see \prettyref{sub:prelim_CAP}).

If \prettyref{desc:prod_ergodic_bdd} fails, then we may assume that
$(\om_{T_{\Xi}\z,T_{\Xi}\xi}\tensor\i)(U)\in D(\tau_{i/2})$ for all
$\z,\xi\in\K$. In particular, $u\in D(\i\tensor\tau_{i/2})$ again.
This completes the proof.\end{proof}
\begin{rem}
\label{rem:calc_Z}In the next proof we use the following simple observation.
Suppose that $\mathcal{F}\in\mathcal{BR}(\overline{U}\tp U')$ and
$\z\in\H$ is such that $\J\z\tensor\z\in\mathcal{F}$. Write $p_{\mathcal{F}}$
for the projection of $\J\H\tensor\H$ onto $\mathcal{F}$, $p_{\C\z}$
for the projection of $\H$ onto $\C\z$ and $Z:=\overline{U}_{13}(\i\tensor\i\tensor\tau_{-i/2})(U_{23}(p_{\mathcal{F}}\tensor\one))$.
Then $(\i\tensor\i\tensor\tau_{-i/2})(U_{23}(p_{\mathcal{F}}\tensor\one))(\J\z\tensor\z\tensor\eta)=\J\z\tensor(\i\tensor\tau_{-i/2})(U(p_{\C\z}\tensor\one))(\z\tensor\eta)$
for all $\eta\in\Ltwo{\G}$, and therefore, for every $\xi,\xi'\in\H$
and $\om\in\Lone{\G}$, we have 
\[
\begin{split}(\om_{\J\z,\J\xi}\tensor\om_{\z,\xi'}\tensor\om)(Z) & =\om\left[(\om_{\J\z,\J\xi}\tensor\i)(\overline{U})\tau_{-i/2}\left((\om_{\z,\xi'}\tensor\i)(U)\right)\right]\\
 & =\om\left[(\om_{\J\z,\J\xi}\tensor\i)(\overline{U})(\om_{\z,\xi'}\tensor\i)(U')\right]
\end{split}
\]
by the definition of $U'$.\end{rem}
\begin{prop}

\begin{enumerate}
\item Assume that $\G$ is amenable. Then the condition (\prettyref{desc:prod_ergodic_bdd}
with $V:=\overline{U}$), resp.~(\prettyref{desc:prod_ergodic} with
$V:=\overline{U}$), implies \prettyref{desc:weak_mixing_amenable},
resp.~\prettyref{desc:strict_weak_mixing_amenable}. 
\item Assume that the scaling group of $\G$ is trivial. Then the condition
(\prettyref{desc:prod_ergodic} with $V:=\overline{U}$) implies \prettyref{desc:strict_weak_mixing}. 
\end{enumerate}
\end{prop}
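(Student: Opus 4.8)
I would prove both parts by contraposition: from a failure of the relevant weak-mixing condition I would manufacture a nonzero vector invariant under $\overline{U}\tp U'$ — one that, when \prettyref{desc:prod_ergodic_bdd} rather than \prettyref{desc:prod_ergodic} is at issue, also satisfies its extra regularity clause. Write $\z_{1},\dots,\z_{n}$ for the vectors supplied by the failing condition, put $x_{ij}:=(\om_{\z_{i},\z_{j}}\tensor\i)(U)$, and set $\Xi_{0}:=\sum_{i}\J\z_{i}\tensor\z_{i}$; since the given $\mathcal{F}\in\mathcal{BR}(\overline{U}\tp U')$ is a closed subspace containing each $\J\z_{i}\tensor\z_{i}$ we have $\Xi_{0}\in\mathcal{F}$ (in case~(b) the scaling group is trivial, $\mathcal{BR}$ is vacuous and $\mathcal{F}$ may be taken to be the whole space). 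The bridge between ``ergodicity of $\overline{U}\tp U'$'' and ``smallness of the $x_{ij}$'' is the following computation: combining \prettyref{rem:calc_Z} with \prettyref{eq:U'_left_leg__1} and the fact that $R$ is a $*$-anti-automorphism, the $(\om_{\J\z_{i}\tensor\z_{i},\,\J\z_{j}\tensor\z_{j}}\tensor\i)$-slice of $\overline{U}\tp U'$ — i.e.\ of the co-representation $Z:=\overline{U}_{13}(\i\tensor\i\tensor\tau_{-i/2})(U_{23}(p_{\mathcal{F}}\tensor\one))$ on $\mathcal{F}$ — equals $(\om_{\J\z_{i},\J\z_{j}}\tensor\i)(\overline{U})\,(\om_{\z_{i},\z_{j}}\tensor\i)(U')=R(x_{ji})R(x_{ji})^{*}=R(x_{ji}^{*}x_{ji})$.

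For part~(b), $U'=U$ and $\overline{U}\tp U'=\overline{U}\tp U$ is a genuine unitary co-representation. If \prettyref{desc:strict_weak_mixing} fails there are $\z_{1},\dots,\z_{n}$ and $\e_{0}>0$ so that for every state $\om\in\Lone{\G}$ some $|x_{ij}|$ has $\om(|x_{ij}|)>\e_{0}$; by Cauchy--Schwarz $\om(y)\geq\e_{0}^{2}$ for every normal state $\om$, where $y:=\sum_{ij}x_{ij}^{*}x_{ij}$, and using \prettyref{eq:U'_left_leg__1} with $(\i\tensor R)(U)=U^{*}$ one gets $(\om_{\Xi_{0}}\tensor\i)(\overline{U}\tp U)=y$. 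The set $\mathcal{S}:=\{(\i\tensor\om)(\overline{U}\tp U):\om\in\Lone{\G}\text{ a state}\}$ is a semigroup of contractions (closed under products since $(\i\tensor\Delta)(\overline{U}\tp U)=(\overline{U}\tp U)_{12}(\overline{U}\tp U)_{13}$ and states are closed under convolution), so the $\mathcal{S}$-invariant closed convex set $C:=\overline{\co}\{T\Xi_{0}:T\in\mathcal{S}\}$ has a unique minimal-norm element $\eta_{0}$, which is fixed by every $T\in\mathcal{S}$ (strict convexity of the Hilbert norm: $\tfrac12(\eta_{0}+T\eta_{0})\in C$ has norm $\leq\|\eta_{0}\|$, forcing $T\eta_{0}=\eta_{0}$); a vector fixed by all $(\i\tensor\om)(V)$ is invariant under $V$, so $\eta_{0}$ is invariant under $\overline{U}\tp U$. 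Finally $\langle\eta_{0},\Xi_{0}\rangle\geq\inf_{\om}\om(y)\geq\e_{0}^{2}>0$ (each $\langle T\Xi_{0},\Xi_{0}\rangle=\om(y)$; pass to convex combinations and then to the limit), so $\eta_{0}\neq0$, contradicting (\prettyref{desc:prod_ergodic} with $V:=\overline{U}$). No invariant mean is used here.

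For part~(a) let $m$ be the fixed right-invariant mean and $m_{\ell}:=m\circ R$; using \prettyref{eq:R_Delta} together with $R(\one)=\one$ and $R^{2}=\i$ one checks that $m_{\ell}$ is a left-invariant mean. With $Z$ the co-representation of $\G$ on $\mathcal{F}$ furnished by $\mathcal{F}\in\mathcal{BR}(\overline{U}\tp U')$, the mean ergodic theorem \prettyref{lem:mean_erg_thm} (applied to $Z$ with $m_{\ell}$) makes $\eta_{0}:=(\i\tensor\i\tensor m_{\ell})(Z)\Xi_{0}\in\mathcal{F}$ a vector fixed by every $(\i\tensor\i\tensor\om)(Z)$, hence one with $Z(p_{\C\eta_{0}}\tensor\one)=p_{\C\eta_{0}}\tensor\one$; since the domain requirement of \prettyref{def:BR} for $\C\eta_{0}$ is inherited from $\mathcal{F}$, $\eta_{0}$ is invariant under $\overline{U}\tp U'$. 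Evaluating $\langle\eta_{0},\Xi_{0}\rangle=(\om_{\Xi_{0}}\tensor m_{\ell})(Z)$ by the bridge above (legitimate for the non-normal $m_{\ell}$ since the slice involved is a genuine element of $\Linfty{\G}$) and using $m_{\ell}\circ R=m$ gives $\langle\eta_{0},\Xi_{0}\rangle=\sum_{ij}m(x_{ij}^{*}x_{ij})$. If \prettyref{desc:strict_weak_mixing_amenable} fails then $m(|x_{i_{0}j_{0}}|)>0$ for some $i_{0},j_{0}$, so $\langle\eta_{0},\Xi_{0}\rangle\geq m(x_{i_{0}j_{0}}^{*}x_{i_{0}j_{0}})\geq m(|x_{i_{0}j_{0}}|)^{2}>0$ (Cauchy--Schwarz for the state $m$), whence $\eta_{0}\neq0$ and (\prettyref{desc:prod_ergodic} with $V:=\overline{U}$) fails. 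If instead \prettyref{desc:weak_mixing_amenable} fails we have moreover some $\e_{1}>0$ with $(\om_{(\i\tensor\rho_{1})(U^{*})\z_{i},\,(\i\tensor\rho_{2})(U^{*})\z_{j}}\tensor\i)(U)\in D(\tau_{i/2})$ of norm $\leq\e_{1}^{-1}$ for all $i,j$ and all norm-one $\rho_{1},\rho_{2}\in\Lone{\G}$; the same $\eta_{0}$ is then a nonzero vector invariant under $\overline{U}\tp U'$, and it would remain only to verify that it satisfies the extra clause of \prettyref{desc:prod_ergodic_bdd}, i.e.\ $(\om_{T_{\eta_{0}}\z,T_{\eta_{0}}\xi}\tensor\i)(U)\in D(\tau_{i/2})$ for all $\z,\xi\in\H$ (note $((\J w)^{*}\tensor\one)\eta_{0}=T_{\eta_{0}}w$), which would contradict (\prettyref{desc:prod_ergodic_bdd} with $V:=\overline{U}$).

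That last domain verification is the step I expect to be the main obstacle. The plan for it: unwinding $\eta_{0}=(\i\tensor\i\tensor m_{\ell})(Z)\Xi_{0}$ through \prettyref{rem:calc_Z}, and using $(\i\tensor R)(U^{*})=U'$ together with $R^{2}=\i$, one rewrites $T_{\eta_{0}}\z=\sum_{i=1}^{n}(\i\tensor\sigma_{i}^{\z})(U^{*})\z_{i}$, where $\sigma_{i}^{\z}:=m_{\ell}\bigl(R((\om_{\z,\z_{i}}\tensor\i)(U))\cdot R(\,\cdot\,)\bigr)$ is a bounded (in general non-normal) functional with $\|\sigma_{i}^{\z}\|\leq\|\z\|\,\|\z_{i}\|$. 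By Goldstine each $\sigma_{i}^{\z}$ is a weak-$*$ limit of normal functionals of no larger norm, hence by Mazur's lemma $(\i\tensor\sigma_{i}^{\z})(U^{*})\z_{i}$ lies in the norm-closed convex hull of suitably scaled vectors $(\i\tensor\rho)(U^{*})\z_{i}$ with $\rho\in\Lone{\G}$ of norm $\leq1$. Consequently $(\om_{T_{\eta_{0}}\z,T_{\eta_{0}}\xi}\tensor\i)(U)$ is a norm limit of finite combinations of scalar multiples of the elements estimated in the failing hypothesis; these lie in $D(\tau_{i/2})$ with $\tau_{i/2}$-images of uniformly bounded norm, and since $\tau_{i/2}$ — being the analytic generator of a $\sigma$-weakly continuous one-parameter group — is $\sigma$-weakly closed, passing to a weak-$*$-convergent subnet of the $\tau_{i/2}$-images yields $(\om_{T_{\eta_{0}}\z,T_{\eta_{0}}\xi}\tensor\i)(U)\in D(\tau_{i/2})$. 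Once this is in hand, part~(a) is complete.
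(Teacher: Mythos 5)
Your argument is correct and follows the paper's proof almost step for step: contraposition, the averaged invariant vector (the minimal-norm element of the orbit's closed convex hull of $\Theta=\sum_{i}\J\z_{i}\tensor\z_{i}$ in case (b), the image of $\Theta$ under $(\i\tensor\i\tensor m\circ R)(Z)$ in case (a)), and non-vanishing via the same pairing $\langle\Xi_{\om},\Theta\rangle=\sum_{i,j}(\om\circ R)\bigl(\bigl|(\om_{\z_{j},\z_{i}}\tensor\i)(U)\bigr|^{2}\bigr)$ together with Cauchy--Schwarz for states. The one place you genuinely diverge is the verification of the $D(\tau_{i/2})$ clause of \prettyref{desc:prod_ergodic_bdd}: you express $T_{\eta_{0}}\z$ directly through the non-normal functionals $\sigma_{i}^{\z}$ built from the mean, and use Goldstine plus Mazur to realize $(\i\tensor\sigma_{i}^{\z})(U^{*})\z_{i}$ as a norm limit of vectors $(\i\tensor\rho)(U^{*})\z_{i}$ with $\rho\in\Lone{\G}$ of controlled norm, so that a single application of the closedness of $\tau_{i/2}$ under bounded weak convergence (\prettyref{lem:action_conv_bdd}) finishes; the paper instead approximates the mean by a net of normal states from the outset, obtains $\Xi$ as a weak limit of the $\Xi_{\om_{\iota}}$, and applies that closedness lemma twice in an iterated two-step limit. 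Both routes are valid, and yours trades the double limit for a soft approximation argument. Two small points: your slice $(\om_{\Xi_{0}}\tensor\i)(\overline{U}\tp U)$ actually equals $R(y)$ rather than $y$ (harmless, since $\om\circ R$ ranges over all states as $\om$ does), and the passage from ``$\eta_{0}$ is fixed by every $(\i\tensor\i\tensor\om)(Z)$'' to ``$\eta_{0}$ is invariant under $\overline{U}\tp U'$ in the sense of \prettyref{def:BR}'' deserves the one line you give it, which is the same level of detail as the paper's appeal to \prettyref{lem:mean_erg_thm}.
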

\begin{proof}
All implications are proved by contraposition. Hence, we are given
$\z_{1},\ldots,\z_{n}\in\H$ without the respective property. Out
of them we obtain a vector $\Xi$ invariant under $\overline{U}\tp U'$
by averaging, as follows. Let $\Theta:=\sum_{i=1}^{n}\J\z_{i}\tensor\z_{i}$. 

Case I: $\G$ is amenable and \prettyref{desc:strict_weak_mixing_amenable}
does not hold. Letting $\mathcal{F}$ be the given element of $\mathcal{BR}(\overline{U}\tp U')$,
we have $\J\z_{i}\tensor\z_{i}\in\mathcal{F}$ for every $1\leq i\leq n$,
so $\Theta\in\mathcal{F}$. Let $p_{\mathcal{F}}$ be the projection
of $\J\H\tensor\H$ onto $\mathcal{F}$ and $Z:=\overline{U}_{13}(\i\tensor\i\tensor\tau_{-i/2})(U_{23}(p_{\mathcal{F}}\tensor\one))$.
Then $Z$ is a (bounded, generally not unitary) co-representation
of $\G$ on $\J\H\tensor\H$ commuting with $p_{\mathcal{F}}\tensor\one$.
If $m\in\Linfty{\G}^{*}$ is the given right-invariant mean for $\G$,
then $m':=m\circ R$ is a left-invariant mean for $\G$ by \prettyref{eq:R_Delta}.
Let $\Xi:=((\i\tensor\i\tensor m')(Z))\Theta\in\mathcal{F}$. By \prettyref{lem:mean_erg_thm},
$\Xi$ is invariant under $Z$, thus under $\overline{U}\tp U'$ in
the sense of \prettyref{def:BR}.

Case II: the scaling group is trivial (so $U'=U$) and \prettyref{desc:strict_weak_mixing}
does not hold. Consider the closed convex set 
\[
K:=\overline{\left\{ ((\i\tensor\i\tensor\om)(\overline{U}\tp U))\Theta:\om\text{ is a state in }\Lone{\G}\right\} }.
\]
Since $\overline{U}\tp U$ is a unitary co-representation of $\G$,
$K$ is invariant under $\overline{U}\tp U$, because if $\om_{1},\om_{2}\in\Lone{\G}$
are states, then $(\i\tensor\i\tensor\om_{1})(\overline{U}\tp U)(\i\tensor\i\tensor\om_{2})(\overline{U}\tp U)=(\i\tensor\i\tensor(\om_{1}*\om_{2}))(\overline{U}\tp U)$.
Let $\Xi$ be the unique element of minimal norm in $K$. Then $\Xi$
is invariant under $\overline{U}\tp U$ because for every state $\om$
in $\Lone{\G}$, $((\i\tensor\i\tensor\om)(\overline{U}\tp U))\Xi\in K$
has norm at most $\left\Vert \Xi\right\Vert $. Let $Z:=\overline{U}\tp U$.

Treating both cases together, we show that $\Xi\neq0$. Let $\om$
be a state of $\Linfty{\G}$. By \prettyref{rem:calc_Z} and \prettyref{lem:U'_left_leg},
\prettyref{eq:U'_left_leg__1}, for every $\z\in\left\{ \z_{1},\ldots,\z_{n}\right\} $
and $\xi\in\H$, 
\[
\begin{split}\left\langle ((\i\tensor\i\tensor\om)(Z))(\J\z\tensor\z),\J\xi\tensor\xi\right\rangle  & =\om\left[(\om_{\J\z,\J\xi}\tensor\i)(\overline{U})(\om_{\z,\xi}\tensor\i)(U')\right]\\
 & =(\om\circ R)\left[(\om_{\xi,\z}\tensor\i)(U)^{*}(\om_{\xi,\z}\tensor\i)(U)\right].
\end{split}
\]
Writing $\Xi_{\om}:=((\i\tensor\i\tensor\om)(Z))\Theta$, we infer
that
\begin{equation}
\left\langle \Xi_{\om},\Theta\right\rangle =\sum_{i,j=1}^{n}(\om\circ R)\bigl(\bigl|(\om_{\z_{j},\z_{i}}\tensor\i)(U)\bigr|^{2}\bigr).\label{eq:Xi_om__Theta}
\end{equation}
By assumption, in case I, there exist $i,j$ such that $m(|(\om_{\z_{j},\z_{i}}\tensor\i)(U)|^{2})>0$,
hence $\left\langle \Xi,\Theta\right\rangle >0$ by setting $\om:=m'$
in \prettyref{eq:Xi_om__Theta}. In case II, there is $\e_{0}>0$
such that for every state $\om$ in $\Lone{\G}$ there exist $i,j$
such that $(\om\circ R)(|(\om_{\z_{j},\z_{i}}\tensor\i)(U)|^{2})\geq\e_{0}^{2}$,
hence $\left\langle \Xi_{\om},\Theta\right\rangle \geq\e_{0}^{2}$
by \prettyref{eq:Xi_om__Theta}. Therefore $\Xi\neq0$ either way,
failing \prettyref{desc:prod_ergodic} with $\overline{U}$ for $V$.

Now suppose that $\G$ is amenable and \prettyref{desc:weak_mixing_amenable}
does not hold (with respect to the same $\z_{1},\ldots,\z_{n}\in\H$).
Fix $\z,\z'\in\H$. To fail \prettyref{desc:prod_ergodic_bdd} with
$\overline{U}$ for $V$, we need to establish that $(\om_{T_{\Xi}\z,T_{\Xi}\z'}\tensor\i)(U)\in D(\tau_{i/2})$.
Let $\om,\om'$ be states in $\Lone{\G}$. Write $\rho_{i}:=\om\cdot(\om_{\J\z_{i},\J\z}\tensor\i)(\overline{U})\in\Lone{\G}$,
$1\leq i\leq n$. From \prettyref{rem:calc_Z} we get
\[
\begin{split}T_{\Xi_{\om}}\z & =((\J\z)^{*}\tensor\i)(\Xi_{\om})=\sum_{i=1}^{n}((\J\z)^{*}\tensor\one)\left[((\i\tensor\i\tensor\om)(Z))(\J\z_{i}\tensor\z_{i})\right]\\
 & =\sum_{i=1}^{n}((\i\tensor\rho_{i})(U'))\z_{i}=\sum_{i=1}^{n}(\i\tensor(\rho_{i}\circ R))(U^{*})\z_{i}.
\end{split}
\]
Similarly, $T_{\Xi_{\om'}}\z'=\sum_{j=1}^{n}(\i\tensor(\rho_{j}'\circ R))(U^{*})\z_{j}$
with suitable $\rho_{j}'\in\Lone{\G}$, $1\leq j\leq n$. Let 
\[
a_{\om,\om'}:=(\om_{T_{\Xi_{\om}}\z,T_{\Xi_{\om'}}\z'}\tensor\i)(U)=\sum_{i,j=1}^{n}(\om_{(\i\tensor(\rho_{i}\circ R))(U^{*})\z_{i},(\i\tensor(\rho_{j}'\circ R))(U^{*})\z_{j}}\tensor\i)(U).
\]
By assumption, $a_{\om,\om'}$ belongs to $D(\tau_{i/2})$ and satisfies
\begin{equation}
\left\Vert \tau_{i/2}(a_{\om,\om'})\right\Vert \leq\e_{0}^{-1}\sum_{i,j=1}^{n}\left\Vert \rho_{i}\circ R\right\Vert \left\Vert \rho_{j}'\circ R\right\Vert \leq\e_{0}^{-1}\sum_{i,j=1}^{n}\left\Vert \z_{i}\right\Vert \left\Vert \z_{j}\right\Vert \left\Vert \z\right\Vert \left\Vert \z'\right\Vert .\label{eq:tau_i_2__unif_bdd}
\end{equation}

Choose a net $\left(\om_{\iota}\right)$ of states in $\Lone{\G}$
that is $w^{*}$-convergent to $m'$. Hence $\Xi_{\om_{\iota}}\to\Xi$
weakly. By \prettyref{eq:tau_i_2__unif_bdd}, $a_{\om_{\iota},\om_{k}}$
belongs to $D(\tau_{i/2})$ for every $\iota,\kappa$ and $(\Vert\tau_{i/2}(a_{\om_{\iota},\om_{k}})\Vert)_{\iota,\kappa}$
is bounded. When $\kappa$ is fixed, $a_{\om_{\iota},\om_{k}}\to a_{\om_{\kappa}}:=(\om_{T_{\Xi}\z,T_{\Xi_{\om_{\kappa}}}\z'}\tensor\i)(U)$
weakly. From \prettyref{lem:action_conv_bdd}, $a_{\om_{\kappa}}\in D(\tau_{i/2})$
and $(\tau_{i/2}(a_{\om_{\kappa}}))_{\kappa}$ is bounded. Now $a_{\om_{\kappa}}\to(\om_{T_{\Xi}\z,T_{\Xi}\z'}\tensor\i)(U)$
weakly, and \prettyref{lem:action_conv_bdd} is used again to conclude
that $(\om_{T_{\Xi}\z,T_{\Xi}\z'}\tensor\i)(U)\in D(\tau_{i/2})$,
as desired.
\end{proof}
This completes the proof of \prettyref{thm:wm_conditions}.
\begin{cor}
Let $\G$ be a LCQG with trivial scaling group and let $U$ be a unitary
co-representation of $\G$. If $U$ is weakly mixing, then so is $U^{\tpsmall n}$
for every $n>1$. \end{cor}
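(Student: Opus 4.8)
The plan is to reduce the corollary to the single tensor-product characterization of weak mixing that is available when the scaling group is trivial, and then exploit an ``absorption'' phenomenon. Since the scaling group of $\G$ is trivial we have $W'=W$ for every unitary co-representation $W$, and \prettyref{thm:wm_conditions} (parts \prettyref{enu:wm_conditions_2} and (d)) tells us that a unitary co-representation $W$ of $\G$ is weakly mixing if and only if the tensor product $V\tp W$ is ergodic for \emph{every} unitary co-representation $V$ of $\G$ --- this is \prettyref{desc:prod_ergodic} holding for all $V$, which under trivial scaling group sits in the same equivalence class as \prettyref{desc:weak_mixing}. So the statement becomes: if $V\tp U$ is ergodic for every $V$, then $V\tp U^{\tpsmall n}$ is ergodic for every $V$.

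First I would record the routine fact that the tensor product of co-representations is associative up to the canonical unitary identifying $(\K\tensor\H^{\tensor(n-1)})\tensor\H$ with $\K\tensor\H^{\tensor n}$, which is immediate from the leg-numbering formula $U_{1}\tp U_{2}=(U_{1})_{13}(U_{2})_{23}$; ergodicity is obviously invariant under such a unitary equivalence. Hence, for a fixed unitary co-representation $V$ on $\K$ and $n>1$, the co-representation $V\tp U^{\tpsmall n}$ is unitarily equivalent to $W\tp U$, where $W:=V\tp U^{\tpsmall(n-1)}$ is again a unitary co-representation of $\G$. Applying the characterization above to $U$, with $W$ playing the role of the ``arbitrary'' co-representation, shows that $W\tp U$, hence $V\tp U^{\tpsmall n}$, is ergodic; as $V$ was arbitrary, the same characterization applied to $U^{\tpsmall n}$ gives that $U^{\tpsmall n}$ is weakly mixing. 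Note that no induction on $n$ and no weak-mixing property of $U^{\tpsmall(n-1)}$ is needed: the single factor $U$ absorbs whatever is tensored with it.

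I do not anticipate a genuine obstacle; the one point that needs care is to invoke the ``for every $V$'' form of \prettyref{desc:prod_ergodic} from \prettyref{thm:wm_conditions} --- both when deducing ergodicity of $W\tp U$ from the hypothesis on $U$ and when concluding weak mixing of $U^{\tpsmall n}$ from ergodicity of $V\tp U^{\tpsmall n}$ for all $V$ --- rather than the a priori weaker $V:=\overline{U}$ version. Triviality of the scaling group is exactly what removes the distinction between the primed and unprimed co-representations and collapses all the listed conditions into one equivalence class, so this is legitimate.
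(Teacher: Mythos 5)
Your proof is correct and is essentially the paper's own argument: the paper's proof is the one-line remark that the corollary ``follows easily from \prettyref{desc:prod_ergodic} with $V:=\overline{U}$ and with arbitrary $V$ being equivalent,'' and your absorption argument via associativity of $\tp$ (writing $V\tp U^{\tpsmall n}$ as $W\tp U$ with $W:=V\tp U^{\tpsmall(n-1)}$) is exactly what that remark leaves implicit. No further comment is needed.
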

\begin{proof}
This follows easily from \prettyref{desc:prod_ergodic} with $V:=\overline{U}$
and with arbitrary $V$ being equivalent. 
\end{proof}

\section{Applications}

\subsection{\label{sub:Jacobs_de-Leeuw_Glicksberg}The noncommutative Jacobs--de
Leeuw--Glicksberg splitting theorem}

Suppose that a LCQG $\G$ acts on a von Neumann algebra $N$ via an
action $\a:N\to N\tensorn\Linfty{\G}$ that preserves a faithful normal
state $\theta$ of $N$. The \emph{Koopman co-representation} of the
dynamical system $(N,\theta,\G,\a)$ is the unitary implementation
$U\in B(\Ltwo{N,\theta})\tensorn\Linfty{\G}$ of $\a$, given by 
\begin{align}
(\om_{\Lambda_{\theta}(a),\Lambda_{\theta}(b)}\tensor\i)(U) & =(\theta\tensor\i)\left((b^{*}\tensor\one)\a(a)\right),\qquad\text{or equivalently }\label{eq:state_preserve_act_unitary_imp}\\
(\om_{\Lambda_{\theta}(a),\Lambda_{\theta}(b)}\tensor\i)(U^{*}) & =(\theta\tensor\i)\left(\a(b^{*})(a\tensor\one)\right),\label{eq:state_preserve_act_unitary_imp_adjoint}
\end{align}
for all $a,b\in N$. Since $\Lambda_{\theta}(\one)$ is invariant
under $U$, so is $\Ltwozero{N,\theta}:=\Ltwo{N,\theta}\ominus\C\Lambda_{\theta}(\one)$,
and we can apply the above results to the restriction of $U$ to $\Ltwozero{N,\theta}$.
When this restriction is (weakly) mixing, the action $\a$ is said
to be (weakly) mixing. From \prettyref{lem:action_with_invariant_state_J}
we obtain $\overline{U}=U$ when using $J_{\theta}$ as $\J$ (meaning
that as in the classical setting, ``$U$ is induced by the orthogonal
co-representation of the dynamical system''). This simplifies a little
most of the conditions for weak mixing. 

The next theorem is a fundamental consequence of \prettyref{thm:wm_conditions}.
Runde and the author generalized in \citep{Runde_Viselter_LCQGs_Ergodic_Thy}
the Jacobs--de Leeuw--Glicksberg splitting theorem of \citep{Niculescu_Stroh_Zsido__noncmt_recur,Zsido__splitting_noncomm_dyn_sys}
to state-preserving actions of quantum semigroups on von Neumann algebras.
We introduced the notion of \emph{completely almost periodic operators}
\citep[Definition 3.7 and Theorem 4.5]{Runde_Viselter_LCQGs_Ergodic_Thy},
and established, under suitable assumptions, the existence of a conditional
expectation $E^{\CAP}$ from the von Neumann algebra that was acted
on onto its subalgebra consisting of these operators. The kernel of
$E^{\CAP}$ was conjectured to have a weakly mixing nature, but how
to put this in exact terms was left open \citep[Corollary 4.10 and the preceding paragraph]{Runde_Viselter_LCQGs_Ergodic_Thy}.
This issue is settled in the following result, which is also the complement
of \citep[Corollary 3.14]{Runde_Viselter_LCQGs_Ergodic_Thy} relating
complete almost periodicity to recurrence.

Recall that actions of co-amenable LCQGs are assumed to satisfy \prettyref{eq:action_co_amen_LCQG}.
\begin{thm}
\label{thm:Jacobs_de-Leeuw_Glicksberg}Let $\G$ be a co-amenable,
amenable LCQG that acts on a von Neumann algebra $N$ via an action
$\a:N\to N\tensorn\Linfty{\G}$ that preserves a faithful normal state
$\theta$ of $N$. Denote by $U$ the unitary implementation of $\a$
and by $N^{\CAP}$ the von Neumann subalgebra of $N$ consisting of
all completely almost periodic operators with respect to $\a$, and
let $m\in\Linfty{\G}^{*}$ be a left-invariant mean for $\G$. Then
the unique $\theta$-preserving conditional expectation $E^{\CAP}$
from $N$ onto $N^{\CAP}$ has the following property:

if $a_{1},\ldots,a_{n}\in\ker E^{\CAP}$ are such that some element
of $\mathcal{BR}(U\tp U')$ contains all vectors $\Lambda_{\theta}(a_{i})\tensor J_{\theta}\Lambda_{\theta}(a_{i})$,
$1\leq i\leq n$, then \emph{either} for each $1\leq i,j\leq n$ we
have $m\bigl(\left|(\theta\tensor\i)((a_{j}^{*}\tensor\one)\a(a_{i}))\right|_{\mathrm{r}}\bigr)=0$,
\emph{or} for every $\e>0$ there exist $1\leq i,j\leq n$ and $\rho_{1},\rho_{2}\in\Lone{\G}$
of norm $1$ such that $y:=(\theta\tensor\i)\bigl[\bigl((\i\tensor\rho_{2})\a(a_{j})^{*}\tensor\one\bigr)\a\bigl((\i\tensor\rho_{1})\a(a_{i})\bigr)\bigr]$
satisfies $\left\Vert \tau_{i/2}(y)\right\Vert >\e^{-1}$.\end{thm}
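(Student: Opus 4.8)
The plan is to recognise $\ker E^{\CAP}$, at the level of $\Ltwo{N,\theta}$, as sitting inside the weakly mixing part of the Koopman co-representation, and then to quote \prettyref{thm:wm_conditions}. Write $U_{0}$ for the restriction of $U$ to $\Ltwozero{N,\theta}$, let $\H^{\CAP}\subseteq\Ltwozero{N,\theta}$ be the subspace of completely almost periodic vectors for $U_{0}$, put $\H^{\mathrm{wm}}:=\Ltwozero{N,\theta}\ominus\H^{\CAP}$, and note that the description of $N^{\CAP}$ and $E^{\CAP}$ in \citep{Runde_Viselter_LCQGs_Ergodic_Thy} gives $\Lambda_{\theta}(\ker E^{\CAP})\subseteq\H^{\mathrm{wm}}$ (concretely, $\overline{\Lambda_{\theta}(N^{\CAP})}=\C\Lambda_{\theta}(\one)\oplus\H^{\CAP}$, and $\Lambda_{\theta}\circ E^{\CAP}$ equals $\Lambda_{\theta}$ followed by the orthogonal projection onto $\overline{\Lambda_{\theta}(N^{\CAP})}$). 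As $\H^{\CAP}$ is invariant under $U$, so is $\H^{\mathrm{wm}}$; write $U^{\mathrm{wm}}$ for the corresponding (unitary) sub-representation. Since $E^{\CAP}$ is a $\theta$-preserving conditional expectation, $N^{\CAP}$ is globally $\sigma^{\theta}$-invariant, so $J_{\theta}$ preserves $\overline{\Lambda_{\theta}(N^{\CAP})}$, $\C\Lambda_{\theta}(\one)$ and $\Ltwozero{N,\theta}$, hence also $\H^{\CAP}$ and $\H^{\mathrm{wm}}$; taking $\J:=J_{\theta}|_{\H^{\mathrm{wm}}}$, the equality $\overline{U}=U$ furnished by \prettyref{lem:action_with_invariant_state_J} restricts to $\overline{U^{\mathrm{wm}}}=U^{\mathrm{wm}}$. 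Finally $U^{\mathrm{wm}}$ satisfies \prettyref{desc:no_CAP_vect}, because a nonzero completely almost periodic vector for $U^{\mathrm{wm}}$ would be one for $U_{0}$, hence lie in $\H^{\CAP}\cap\H^{\mathrm{wm}}=\{0\}$; as $\G$ is amenable, \prettyref{thm:wm_conditions} then yields that $U^{\mathrm{wm}}$ satisfies \prettyref{desc:weak_mixing_amenable} with respect to the right-invariant mean $m':=m\circ R$ (right-invariant by \prettyref{eq:R_Delta}).

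I then apply \prettyref{desc:weak_mixing_amenable} for $U^{\mathrm{wm}}$, with the mean $m'$, to the vectors $\z_{i}:=J_{\theta}\Lambda_{\theta}(a_{i})\in\H^{\mathrm{wm}}$. Since $\overline{U^{\mathrm{wm}}}=U^{\mathrm{wm}}$ and $J_{\theta}^{2}=\i$, we have $\J\z_{i}\tensor\z_{i}=\Lambda_{\theta}(a_{i})\tensor J_{\theta}\Lambda_{\theta}(a_{i})$, so the hypothesis that some member of $\mathcal{BR}(U\tp U')$ contains all these vectors is exactly the hypothesis of \prettyref{desc:weak_mixing_amenable} for the $\z_{i}$ — the passage from $\mathcal{BR}(U\tp U')$ to $\mathcal{BR}(U^{\mathrm{wm}}\tp(U^{\mathrm{wm}})')$ being a routine compression, as $\H^{\mathrm{wm}}$ is invariant under $U$, $U'$ and $\overline{U}$. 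The resulting dichotomy must be rewritten in terms of $\a$ and $\theta$. The ingredients for this are: the Koopman formulas \prettyref{eq:state_preserve_act_unitary_imp}--\prettyref{eq:state_preserve_act_unitary_imp_adjoint}; the slice identities $(\i\tensor\rho)(U)\Lambda_{\theta}(a)=\Lambda_{\theta}((\i\tensor\rho)\a(a))$ and $(\i\tensor\rho)(U^{*})J_{\theta}\Lambda_{\theta}(a)=J_{\theta}\Lambda_{\theta}((\i\tensor\overline{\rho}\circ R)\a(a))$ (with $\overline{\rho}\colon x\mapsto\overline{\rho(x^{*})}$; the second is verified by pairing against $J_{\theta}\Lambda_{\theta}(c)$ and combining \prettyref{lem:U'_left_leg}, \prettyref{eq:U'_left_leg__1} with $\overline{U}=U$, $\a(a)^{*}=\a(a^{*})$ and $\overline{\overline{\rho}\circ R}=\rho\circ R$); the relation $(\om_{J_{\theta}\z,J_{\theta}\eta}\tensor\i)(U)=R[(\om_{\eta,\z}\tensor\i)(U)]$; the fact that $R$ is an isometric $*$-anti-automorphism commuting with $\tau$, so it preserves $D(\tau_{i/2})$, intertwines $\tau_{i/2}$, and respects the convention on $\Vert\tau_{i/2}(\cdot)\Vert$; and the identities $R(|x|_{\mathrm{r}})=|R(x)|$ and $m'\circ R=m$.

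Carrying this out, the first alternative gives $(\om_{\z_{i},\z_{j}}\tensor\i)(U)=R[(\theta\tensor\i)((a_{i}^{*}\tensor\one)\a(a_{j}))]$, whence $m'(|(\om_{\z_{i},\z_{j}}\tensor\i)(U)|)=0$ becomes $m(|(\theta\tensor\i)((a_{i}^{*}\tensor\one)\a(a_{j}))|_{\mathrm{r}})=0$, which — after interchanging $i$ and $j$ — is the first alternative in the statement. For the second alternative, with $b_{i}:=(\i\tensor\overline{\rho_{1}}\circ R)\a(a_{i})$ and $b_{j}:=(\i\tensor\overline{\rho_{2}}\circ R)\a(a_{j})$ the slice identities give $(\om_{(\i\tensor\rho_{1})(U^{*})\z_{i},(\i\tensor\rho_{2})(U^{*})\z_{j}}\tensor\i)(U)=R[(\theta\tensor\i)((b_{i}^{*}\tensor\one)\a(b_{j}))]$, so, $R$ being isometric and commuting with $\tau_{i/2}$, the bound on the $\tau_{i/2}$-norm transfers to $\Vert\tau_{i/2}((\theta\tensor\i)((b_{i}^{*}\tensor\one)\a(b_{j})))\Vert>\e^{-1}$, which — after interchanging $i$ and $j$ and noting $\Vert\overline{\rho_{k}}\circ R\Vert=1$ — is the second alternative in the statement (with $\rho_{1},\rho_{2}$ there being $\overline{\rho_{2}}\circ R,\overline{\rho_{1}}\circ R$). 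I expect this last transcription to be the main obstacle: establishing the $U^{*}$-on-$J_{\theta}$ slice identity and then tracking, through both alternatives, how $R$, $\tau_{i/2}$, the conjugates of functionals and the index labels interact. By contrast the conceptual point — that $\ker E^{\CAP}$ lies in the weakly mixing part, so that everything reduces to \prettyref{thm:wm_conditions} — is comparatively soft.
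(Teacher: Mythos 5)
Your proposal is correct and follows essentially the same route as the paper: identify $\Lambda_\theta(\ker E^{\CAP})$ with the orthocomplement of the completely almost periodic vectors, observe that the restricted Koopman co-representation satisfies \prettyref{desc:no_CAP_vect} by construction so that amenability and \prettyref{thm:wm_conditions} yield \prettyref{desc:weak_mixing_amenable}, and then transcribe the dichotomy via $\overline{U}=U$ (from \prettyref{lem:action_with_invariant_state_J}), the Koopman slice formulas, the right-invariant mean $m\circ R$, and the commutation of $R$ with $\tau$. The only difference is cosmetic (you work on $\Ltwozero{N,\theta}\ominus\H^{\CAP}$ rather than $\Ltwo{N,\theta}\ominus\Ltwo{N,\theta}^{\CAP}$, and you flag the $\mathcal{BR}$ compression explicitly, which the paper leaves implicit).
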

\begin{proof}
The novelty is the weak mixing property of $\ker E^{\CAP}$, the rest
being a special case of \citep[Corollary 4.10]{Runde_Viselter_LCQGs_Ergodic_Thy}.
The set $\Ltwo{N,\theta}^{\CAP}$ of all completely almost periodic
vectors with respect to $\a$ is a closed subspace of $\Ltwo{N,\theta}$
invariant under the unitary implementation $U$ of $\a$ \citep[Definition 3.7]{Runde_Viselter_LCQGs_Ergodic_Thy}.
The restriction of $U$ to $\Ltwo{N,\theta}\ominus\Ltwo{N,\theta}^{\CAP}$
satisfies \prettyref{desc:no_CAP_vect} by definition, and so \prettyref{thm:wm_conditions}
applies, and we infer that \prettyref{desc:weak_mixing_amenable}
holds. Observe that each $a\in N$ belongs to $\ker E^{\CAP}$ if
and only if $\Lambda_{\theta}(a)\in\Ltwo{N,\theta}\ominus\Ltwo{N,\theta}^{\CAP}$
because $\Lambda_{\theta}$ intertwines $E^{\CAP}$ and the projection
of $\Ltwo{N,\theta}$ onto $\Ltwo{N,\theta}^{\CAP}$. Moreover, $\Ltwo{N,\theta}\ominus\Ltwo{N,\theta}^{\CAP}$
is invariant under $J_{\theta}$. To get the desired result, take
$\left(J_{\theta}\Lambda_{\theta}(a_{i})\right)_{i=1}^{n}$ for $\left(\z_{i}\right)_{i=1}^{n}$
in \prettyref{desc:weak_mixing_amenable}, use \prettyref{eq:state_preserve_act_unitary_imp}
and the formulas 
\[
\begin{split}\begin{gathered}(\om_{J_{\theta}\z,J_{\theta}\eta}\tensor\i)(U)=R((\om_{\eta,\z}\tensor\i)(U)),\\
J_{\theta}(\i\tensor\overline{\rho\circ R})(U^{*})J_{\theta}=(\i\tensor\rho)(U)
\end{gathered}
 & \qquad(\forall\z,\eta\in\Ltwo{N,\theta},\rho\in\Lone{\G})\end{split}
\]
derived from \prettyref{lem:action_with_invariant_state_J} to get
$\left|(\om_{J_{\theta}\Lambda_{\theta}(a_{j}),J_{\theta}\Lambda_{\theta}(a_{i})}\tensor\i)(U)\right|=R\bigl(\left|(\theta\tensor\i)((a_{j}^{*}\tensor\one)\a(a_{i}))\right|_{\mathrm{r}}\bigr)$
and
\begin{multline*}
\bigl(\om_{(\i\tensor\overline{\rho_{2}\circ R})(U^{*})J_{\theta}\Lambda_{\theta}(a_{j}),(\i\tensor\overline{\rho_{1}\circ R})(U^{*})J_{\theta}\Lambda_{\theta}(a_{i})}\tensor\i\bigr)(U)=R\bigl(\bigl(\om_{(\i\tensor\rho_{1})(U)\Lambda_{\theta}(a_{i}),(\i\tensor\rho_{2})(U)\Lambda_{\theta}(a_{j})}\tensor\i\bigr)(U)\bigr)\\
=R\bigl\{(\theta\tensor\i)\bigl[\bigl((\i\tensor\rho_{2})\a(a_{j})^{*}\tensor\one\bigr)\a\bigl((\i\tensor\rho_{1})\a(a_{i})\bigr)\bigr]\bigr\}.
\end{multline*}
Then notice that $m\circ R$ is right invariant, and use the commutativity
of $\tau$ and $R$.
\end{proof}

\subsection{\label{sub:mixing_in_crossed_products}Relation to (weak) mixing
of inclusions in crossed products by discrete quantum groups}

A notion that is tightly related to weak mixing is the \emph{weak
asymptotic homomorphism property} of a masa in a II$_{1}$-factor,
which turned out to be equivalent to (strong) singularity of the masa,
and provided many examples of such masas (see Sinclair and Smith \citep{Sinclair_Smith__strong_singular},
Robertson, Sinclair and Smith \citep{Robertson_Sinclair_Smith__strong_sing}
and Sinclair, Smith, White and Wiggins \citep{Sinclair_Smith_White_Wiggins__strong_singular_MASAs}).
This was generalized and studied further by Jolissaint and Stalder
\citep{Jolissaint_Stalder__str_singular_MASAs} and by Cameron, Fang
and Mukherjee \citep{Cameron_Fang__Mukherjee__mix_subalg}. In particular,
they proved that in certain cases, when a discrete group $G$ acts
on a finite von Neumann algebra $N$ and preserves a trace, the inclusion
of $\VN G$ in the crossed product $N\rtimes G$ is (weakly) mixing
in appropriate senses if and only if the action is (weakly) mixing
(\citep[Propositions 2.2 and 3.6]{Jolissaint_Stalder__str_singular_MASAs}
and \citep[Proposition 1.1]{Cameron_Fang__Mukherjee__mix_subalg}).
In this subsection we show that, in the very general setting of discrete
quantum group actions, one direction of each of these implications
holds with respect to our definition of weakly mixing actions and
the definition of mixing in \citep{Daws_Fima_Skalski_White_Haagerup_LCQG}.
First, we extend the notion of (weakly) mixing inclusions of von Neumann
algebras \citep[p.~344]{Cameron_Fang__Mukherjee__mix_subalg} beyond
the finite case. 
\begin{defn}
Let $A\subseteq B$ be an inclusion of von Neumann algebras with a
faithful normal conditional expectation $E$ from $B$ onto $A$. 
\begin{itemize}
\item The inclusion $A\subseteq B$ is called \emph{$E$-weakly mixing}
if for every finite subset $F\subseteq B$ there exists a sequence
$\left(v_{n}\right)_{n=1}^{\infty}$ of unitaries in $A$ such that
\[
(\forall x,y\in F)\qquad E(xv_{n}y)-E(x)v_{n}E(y)\xrightarrow[n\to\infty]{}0\quad\text{strongly}.
\]

\item The inclusion $A\subseteq B$ is called \emph{$E$-mixing} if there
exists a subspace $C$ of $B$, dense in the bounded $*$-strong topology,
such that for every sequence $\left(v_{n}\right)_{n=1}^{\infty}$
of unitaries in $A$ converging weakly to zero, we have 
\begin{equation}
(\forall x\in C,y\in B)\qquad E(xv_{n}y)-E(x)v_{n}E(y)\xrightarrow[n\to\infty]{}0\quad\text{strongly}.\label{eq:mixing_inclusion}
\end{equation}

\end{itemize}
\end{defn}
\begin{rem}
If $B$ is a finite von Neumann algebra, so that the adjoint map is
strongly continuous on bounded sets, \prettyref{eq:mixing_inclusion}
holds as it is if and only if it holds for all $x,y\in B$.
\end{rem}
Let $\G$ be a discrete quantum group acting on a von Neumann algebra
$N$ via an action $\a$ that preserves a faithful normal state $\theta$
of $N$. Recall that the \emph{crossed product} $N\rtimes_{\a}\G$
\citep[pp.~434--435]{Vaes__unit_impl_LCQG} is the von Neumann subalgebra
of $N\tensorn B(\ltwo{\G})$ generated by $\a(N)$ and $\C\one\tensor\Linfty{\hat{\G}}'$.
We view $\Linfty{\hat{\G}}'$ as embedded in $N\rtimes_{\a}\G$. Since
$C:=\linspan\{\left(\one\tensor\hat{x}'\right)\a(a):a\in N,\hat{x}'\in\Linfty{\hat{\G}}'\}$
is dense in $N\rtimes_{\a}\G$ in the bounded $*$-strong topology
\citep[Lemma 3.3 and its proof]{Vaes__unit_impl_LCQG} and $(\theta\tensor\i_{B(\ltwo{\G})})\left[(\one\tensor\hat{x}')\a(a)\right]=\theta(a)\hat{x}'$
for every $a\in N,\hat{x}'\in\Linfty{\hat{\G}}'$, the map $E_{\Linfty{\hat{\G}}'}:=\theta\tensor\i_{B(\ltwo{\G})}$
is a faithful normal conditional expectation from $N\rtimes_{\a}\G$
onto $\Linfty{\hat{\G}}'$. Denoting by $\tilde{\theta}$ the dual
weight of $\theta$ \citep[Definition 3.1]{Vaes__unit_impl_LCQG},
this n.s.f.~weight on $N\rtimes_{\a}\G$ is actually a state as $\theta$
is a state and $\G$ is discrete, and $E_{\Linfty{\hat{\G}}'}$ is
just the unique $\tilde{\theta}$-preserving conditional expectation
from $N\rtimes_{\a}\G$ onto $\Linfty{\hat{\G}}'$. 
\begin{rem}
We deal with $\Linfty{\hat{\G}}'$ rather than $\Linfty{\hat{\G}}$
because $\a$ is a \emph{right} action. This is a mere technical matter.\end{rem}
\begin{prop}
In the above setting we have the following implications:
\begin{enumerate}
\item \label{enu:weak_mixing_in_crossed_products}if the inclusion $\Linfty{\hat{\G}}'\subseteq N\rtimes_{\a}\G$
is $E_{\Linfty{\hat{\G}}'}$-weakly mixing, then $\a$ satisfies \prettyref{desc:no_FDS};
\item \label{enu:mixing_in_crossed_products}if $\a$ is mixing, then the
inclusion $\Linfty{\hat{\G}}'\subseteq N\rtimes_{\a}\G$ is $E_{\Linfty{\hat{\G}}'}$-mixing.
\end{enumerate}
\end{prop}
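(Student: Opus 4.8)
Both implications proceed via the unitary implementation $U$ of $\a$ (the Koopman co-representation, on $\Ltwo{N,\theta}$), using three facts about the crossed product. (i) The GNS space of the dual weight $\tilde\theta$ is $\Ltwo{N,\theta}\tensor\ltwo{\G}$, with cyclic vector $\Omega=\Lambda_{\theta}(\one)\tensor\hat\z$ (where $\hat\z\in\ltwo{\G}$ is the Haar cyclic vector of $\hat{\G}$) and with the Jones projection of $E:=E_{\Linfty{\hat{\G}}'}$ equal to $p_{\Lambda_{\theta}(\one)}\tensor\one$, so that $\Lambda_{\tilde\theta}(E(z))=(p_{\Lambda_{\theta}(\one)}\tensor\one)\Lambda_{\tilde\theta}(z)$; testing against vectors $w\Omega$ with $w$ analytic for $\sigma^{\tilde\theta}$ (so $\Vert z_{n}w\Omega\Vert\le C_{w}\Vert z_{n}\Vert_{\tilde\theta}$) shows that a \emph{bounded} sequence $(z_{n})$ in $N\rtimes_{\a}\G$ with $\Vert z_{n}\Vert_{\tilde\theta}\to0$ tends to $0$ strongly. (ii) The \emph{compactness lemma}: as $\ltwo{\G}$ is an $\ell^{2}$-direct sum over $\Irred{\hat{\G}}$ of finite-dimensional spaces, every $T\in\Cz{\G}=c_{0}(\G)$ lies within arbitrarily small norm of $Tq$ with $q\in c_{0}(\G)$ a finite-rank projection; hence $Tv_{n}\to0$ strongly whenever $(v_{n})$ is bounded in $B(\ltwo{\G})$ with $v_{n}\to0$ weakly. (iii) By \prettyref{eq:state_preserve_act_unitary_imp}--\prettyref{eq:state_preserve_act_unitary_imp_adjoint}, $(\theta\tensor\i)((b^{*}\tensor\one)\a(a))=(\om_{\Lambda_{\theta}(a),\Lambda_{\theta}(b)}\tensor\i)(U)$ and $(\theta\tensor\i)(\a(a)(b\tensor\one))=(\om_{\Lambda_{\theta}(b),\Lambda_{\theta}(a^{*})}\tensor\i)(U^{*})$, and — because $\Lambda_{\theta}(\one)$ is $U$-invariant — such a coefficient lies in $c_{0}(\G)$ as soon as $\a$ is mixing and at least one of the two vectors lies in $\Ltwozero{N,\theta}$.

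For assertion~(a) I argue by contraposition. If $\a$ fails \prettyref{desc:no_FDS}, fix a nonzero finite-dimensional $U$-invariant subspace $\mathcal{H}_{1}\subseteq\Ltwozero{N,\theta}$; by \prettyref{prop:DQG_invertible_transpose} the associated finite-dimensional sub-representation has an invertible transpose, so, by the theory of complete almost periodicity in \citep{Runde_Viselter_LCQGs_Ergodic_Thy}, $\mathcal{H}_{1}=\Lambda_{\theta}(V_{0})$ for a finite-dimensional $V_{0}\subseteq N$. Choose $a_{1},\dots,a_{d}\in V_{0}$ with $\Lambda_{\theta}(a_{1}),\dots,\Lambda_{\theta}(a_{d})$ orthonormal, so that $\theta(a_{i})=0$, $\theta(a_{i}^{*}a_{j})=\delta_{ij}$, and $\a(a_{i})=\sum_{j}a_{j}\tensor u_{ji}$ with $u=(u_{ij})$ a unitary finite-dimensional co-representation of $\G$. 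Put $F:=\{\a(a_{i}),\a(a_{i})^{*}:1\le i\le d\}$; then $E(\a(a_{i}))=\theta(a_{i})\one=0$, and a direct computation with $E=\theta\tensor\i$ gives $E(\a(a_{i})^{*}(\one\tensor v)\a(a_{j}))=\sum_{k}u_{ki}^{*}vu_{kj}$ for every unitary $v\in\Linfty{\hat{\G}}'$. Using the unitarity relations $\sum_{i}u_{li}u_{ki}^{*}=\delta_{lk}\one$, $\sum_{k}u_{kj}^{*}u_{kj}=\one$ together with $\Vert v\hat\z\Vert=\Vert\hat\z\Vert$, one then obtains
\[
\sum_{i,j=1}^{d}\bigl\Vert E\bigl(\a(a_{i})^{*}(\one\tensor v)\a(a_{j})\bigr)-E(\a(a_{i})^{*})\,v\,E(\a(a_{j}))\bigr\Vert_{\tilde\theta}^{2}=d
\]
for every unitary $v$. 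Since strong convergence of these differences to $0$ would force this sum to $0$, no sequence of unitaries in $\Linfty{\hat{\G}}'$ witnesses $E$-weak mixing for the finite set $F$, and the inclusion is not $E$-weakly mixing.

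For assertion~(b), assume $\a$ is mixing and let $C$ be as in the statement ($\ast$-strongly dense by \citep[Lemma 3.3]{Vaes__unit_impl_LCQG}). Fix a bounded sequence of unitaries $v_{n}\to0$ weakly in $\Linfty{\hat{\G}}'$ and $x=(\one\tensor\hat x')\a(a)\in C$. A routine approximation argument (using that $C$ is dense, $\Vert E(xv_{n}w)\Vert\le\Vert x\Vert\Vert w\Vert$, and $\Vert E(z)\Omega\Vert\le\Vert z\Omega\Vert$) lets one take $y=(\one\tensor\hat y')\a(b)\in C$ and, by (i), it suffices to prove $\Vert(E(xv_{n}y)-E(x)v_{n}E(y))\Omega\Vert\to0$. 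Writing $\a(a)=\a(a_{0})+\theta(a)(\one\tensor\one)$ with $a_{0}:=a-\theta(a)\one$ (and likewise for $b$), and using $(\theta\tensor\i)\a(a_{0})=0$, all cross-terms cancel and one is left with $E(xv_{n}y)-E(x)v_{n}E(y)=\hat x'\,E(\a(a_{0})(\one\tensor v_{n}\hat y')\a(b_{0}))$. Computing $\a(b_{0})\Omega$ and applying the Jones projection identifies $\Vert E(\a(a_{0})(\one\tensor v_{n}\hat y')\a(b_{0}))\Omega\Vert$ with $\bigl\Vert\sum_{j}(\om_{\xi_{j},\Lambda_{\theta}(a_{0}^{*})}\tensor\i)(U^{*})\,v_{n}\,\z_{j}\bigr\Vert$, where the coefficients of $U^{*}$ carry the trace-zero argument $\Lambda_{\theta}(a_{0}^{*})\in\Ltwozero{N,\theta}$ and hence lie in $c_{0}(\G)$ by mixing, while $\xi_{j},\z_{j}$ range over bounded subsets of $\ltwo{\G}$; approximating $\a(b_{0})$ strongly by $\a(b_{0})(\one\tensor q)$ with $q\in c_{0}(\G)$ a finite-rank block projection makes the sum finite, and (ii) forces it to $0$. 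This gives $E$-mixing.

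The main obstacle is this last computation: since $\Linfty{\G}$ and $\Linfty{\hat{\G}}'$ do not commute in $B(\ltwo{\G})$, the sandwiched unitary $v_{n}\hat y'$ cannot be slid past $\a(b_{0})$, so one must use \citep{Vaes__unit_impl_LCQG}'s description of $N\rtimes_{\a}\G$ and a formula for $\Lambda_{\tilde\theta}\circ\a$ to reach an expression in which a $c_{0}(\G)$-valued matrix coefficient sits on the \emph{left} of $v_{n}\hat y'$, so that weak nullity of $(v_{n})$ is genuinely turned, via a finite-rank cut-off, into norm nullity; the passage from single-term elements of $C$ to arbitrary $y\in N\rtimes_{\a}\G$ and the truncation of the $\a(b_{0})$-sum are further routine but non-trivial points.
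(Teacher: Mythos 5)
Part (a) of your proposal has a genuine gap at the step ``$\mathcal{H}_{1}=\Lambda_{\theta}(V_{0})$ for a finite-dimensional $V_{0}\subseteq N$''. Neither \prettyref{prop:DQG_invertible_transpose} nor the complete-almost-periodicity results of \citep{Runde_Viselter_LCQGs_Ergodic_Thy} deliver this: the latter give that $\Lambda_{\theta}(N^{\CAP})$ is \emph{dense} in the subspace of completely almost periodic vectors, not that an individual finite-dimensional invariant subspace consists of bounded vectors. The claim is in fact false in general: for the trivial quantum group acting trivially on an infinite-dimensional $(N,\theta)$, every one-dimensional subspace $\C\z$ of $\Ltwozero{N,\theta}$ is invariant (with trivial, hence invertible, transpose), including those with $\z\notin\Lambda_{\theta}(N)$. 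Consequently you cannot in general choose $a_{1},\ldots,a_{d}\in N$ with $\Lambda_{\theta}(a_{i})$ an orthonormal basis of $\mathcal{H}_{1}$ and $\a(a_{i})=\sum_{j}a_{j}\tensor u_{ji}$, and your exact identity $\sum_{i,j}\Vert\cdot\Vert_{\tilde{\theta}}^{2}=d$ is unavailable. The argument must be run approximately: pick $a_{i}\in\ker\theta$ with $\Vert\z_{i}-\Lambda_{\theta}(a_{i})\Vert<\e$ for an orthonormal basis $\left(\z_{i}\right)$ of $\mathcal{H}_{1}$; then $E_{\Linfty{\hat{\G}}'}\bigl[\a(a_{i}^{*})(\one\tensor\hat{J}\hat{v}\hat{J})\a(a_{j})\bigr]\Lambda_{\hat{\varphi}}(\one)$ differs in norm by at most $(2+\e)\e$ from $u_{ji}^{*}\hat{J}\Lambda_{\hat{\varphi}}(\hat{v})$, and applying $u_{ji}$ and summing over $i$ (unitarity of $u$) converts strong convergence to $0$ of these expectations into $1\leq n(2+\e)\e$, a contradiction for $\e$ small. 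Your computation survives in this approximate form, but not as written.

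Part (b) is essentially correct, but the detour you describe as the ``main obstacle'' is unnecessary. By \prettyref{eq:action_discrete_QG} one has $\a(b_{0})(\Lambda_{\theta}(\one)\tensor\Lambda_{\hat{\varphi}}(\one))=(b_{0}\tensor\one)(\Lambda_{\theta}(\one)\tensor\Lambda_{\hat{\varphi}}(\one))$, so the rightmost factor collapses on the cyclic vector and
\[
E_{\Linfty{\hat{\G}}'}\bigl[\a(a_{0})(\one\tensor v_{n}\hat{y}')\a(b_{0})\bigr]\Lambda_{\hat{\varphi}}(\one)=(\theta\tensor\i)\left[\a(a_{0})(b_{0}\tensor\one)\right]v_{n}\hat{y}'\Lambda_{\hat{\varphi}}(\one),
\]
with the $c_{0}(\G)$-valued (hence compact) coefficient already sitting to the \emph{left} of the weakly null vectors $v_{n}\hat{y}'\Lambda_{\hat{\varphi}}(\one)$; no finite-rank truncation of $\a(b_{0})$ and no sum over blocks is needed. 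Your reduction to single terms of $C$, the cancellation of cross-terms after writing $a=a_{0}+\theta(a)\one$, and the passage from norm convergence on the cyclic vector to strong convergence are all fine and match the paper's route.
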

\begin{proof}
Recall that $\Lambda_{\hat{\varphi}}(\one)=\Lambda_{\varphi}(p)$,
and thus, for every $b\in N$, \prettyref{eq:action_discrete_QG}
implies that $\a(b)(\Lambda_{\theta}(\one)\tensor\Lambda_{\hat{\varphi}}(\one))=(\Lambda_{\theta}\tensor\Lambda_{\varphi})(\a(b)(\one\tensor p))=(\Lambda_{\theta}\tensor\Lambda_{\varphi})(b\tensor p)=(b\tensor\one)(\Lambda_{\theta}(\one)\tensor\Lambda_{\hat{\varphi}}(\one))$.
Hence, for $a,b\in N$ and $\hat{x}\in\Linfty{\hat{\G}}$, 
\begin{equation}
\begin{split}E_{\Linfty{\hat{\G}}'}\bigl[\a(a)(\one\tensor\hat{J}\hat{x}\hat{J})\a(b)\bigr]\Lambda_{\hat{\varphi}}(\one) & =(\theta\tensor\i_{B(\ltwo{\G})})\bigl[\a(a)(\one\tensor\hat{J}\hat{x}\hat{J})\a(b)\bigr]\Lambda_{\hat{\varphi}}(\one)\\
 & =(\theta\tensor\i_{B(\ltwo{\G})})\bigl[\a(a)(\one\tensor\hat{J}\hat{x}\hat{J})(b\tensor\one)\bigr]\Lambda_{\hat{\varphi}}(\one)\\
 & =(\theta\tensor\i_{B(\ltwo{\G})})\left[\a(a)(b\tensor\one)\right]\hat{J}\Lambda_{\hat{\varphi}}(\hat{x}).
\end{split}
\label{eq:disc_crossed_prod_cond_exp}
\end{equation}

\prettyref{enu:weak_mixing_in_crossed_products} Suppose that $\a$
fails \prettyref{desc:no_FDS}. Denote by $U$ the unitary implementation
of $\a$, and let $u$ be a finite-dimensional sub-representation
of $U$ on a subspace of $\Ltwo{N,\theta}\ominus\C\Lambda_{\theta}(\one)$
spanned by the orthonormal set $\left\{ \z_{1},\ldots,\z_{n}\right\} $.
Write $u_{ij}:=(\om_{\z_{j},\z_{i}}\tensor\i)(U)$ ($1\leq i,j\leq n$).
Let $\e>0$. Pick $a_{1},\ldots,a_{n}\in N$ such that $\left\Vert \z_{i}-\Lambda_{\theta}(a_{i})\right\Vert <\e$
and $\theta(a_{i})=0$ for $1\leq i\leq n$. Then for every $\hat{v}\in\Linfty{\hat{\G}}$
of norm at most $1$ and $1\leq i,j\leq n$, we have from \prettyref{eq:disc_crossed_prod_cond_exp}
and \prettyref{eq:state_preserve_act_unitary_imp_adjoint}
\[
\begin{split}E_{\Linfty{\hat{\G}}'}\bigl[\a(a_{i}^{*})(\one\tensor\hat{J}\hat{v}\hat{J})\a(a_{j})\bigr]\Lambda_{\hat{\varphi}}(\one) & =(\om_{\Lambda_{\theta}(a_{j}),\Lambda_{\theta}(a_{i})}\tensor\i)(U^{*})\hat{J}\Lambda_{\hat{\varphi}}(\hat{v})\\
 & \approx_{(2+\e)\e}(\om_{\z_{j},\z_{i}}\tensor\i)(U^{*})\hat{J}\Lambda_{\hat{\varphi}}(\hat{v})=u_{ji}^{*}\hat{J}\Lambda_{\hat{\varphi}}(\hat{v}).
\end{split}
\]
Fix $1\leq j\leq n$. Applying $u_{ji}$ to the right-hand side and
summing for $i=1,\ldots,n$, we get $\hat{J}\Lambda_{\hat{\varphi}}(\hat{v})$
by the unitarity of $u$. Thus, if $\left(\hat{v}_{m}\right)$ are
unitaries in $\Linfty{\hat{\G}}$ with $E_{\Linfty{\hat{\G}}'}\bigl[\a(a_{i}^{*})(\one\tensor\hat{J}\hat{v}_{m}\hat{J})\a(a_{j})\bigr]\xrightarrow[m\to\infty]{}0$
strongly for every $1\leq i,j\leq n$, then, as $\hat{J}\Lambda_{\hat{\varphi}}(\hat{v}_{m})$
is a unit vector for every $m$, we obtain $1\leq n(2+\e)\e$. Taking
$\e$ small enough yields a contradiction. Therefore, $\Linfty{\hat{\G}}'\subseteq N\rtimes_{\a}\G$
cannot be $E_{\Linfty{\hat{\G}}'}$-weakly mixing.

\prettyref{enu:mixing_in_crossed_products} Assume that $\a$ is mixing,
that is, $(\theta\tensor\i)\left[\a(a)(b\tensor\one)\right]\in c_{0}(\G)$
for every $a,b\in\ker\theta$ (see \prettyref{eq:state_preserve_act_unitary_imp}).
To prove that $\Linfty{\hat{\G}}'\subseteq N\rtimes_{\a}\G$ is $E_{\Linfty{\hat{\G}}'}$-mixing,
fix a bounded sequence $\left(\hat{v}_{n}\right)$ in $\Linfty{\hat{\G}}$
converging weakly to zero, not necessarily of unitaries. Since $\linfty{\G}$
is an $\ell^{\infty}$-direct sum of (finite-dimensional) matrix algebras,
$c_{0}(\G)$ is the corresponding $c_{0}$-direct sum and $\linfty{\G}$
is in standard form on $\ltwo{\G}$, every $z\in c_{0}(\G)$ is compact,
so that $z\hat{J}\Lambda_{\hat{\varphi}}(\hat{v}_{n})\xrightarrow[n\to\infty]{}0$.
In particular, if $a,b\in\ker\theta$, then from \prettyref{eq:disc_crossed_prod_cond_exp}
we get 
\[
E_{\Linfty{\hat{\G}}'}\bigl[\a(a)(\one\tensor\hat{J}\hat{v}_{n}\hat{J})\a(b)\bigr]\Lambda_{\hat{\varphi}}(\one)=(\theta\tensor\i)\left[\a(a)(b\tensor\one)\right]\hat{J}\Lambda_{\hat{\varphi}}(\hat{v}_{n})\xrightarrow[n\to\infty]{}0.
\]
Hence $E_{\Linfty{\hat{\G}}'}\bigl[\a(a)(\one\tensor\hat{J}\hat{v}_{n}\hat{J})\a(b)\bigr]\to0$
strongly. Thus \prettyref{eq:mixing_inclusion} holds for every $x\in C$
and $y\in C^{*}$. As $C$ is dense in $N\rtimes_{\a}\G$ in the $*$-strong
topology, \prettyref{eq:mixing_inclusion} holds for every $x\in C$
and $y\in N\rtimes_{\a}\G$.\end{proof}
\begin{rem}
What we established in the proof of \prettyref{enu:mixing_in_crossed_products}
is formally stronger than mixing since the operators $\left(\hat{v}_{n}\right)$
are not assumed to be unitary. However, this is not surprising in
light of \citep[Theorem 3.3]{Cameron_Fang__Mukherjee__mix_subalg}.
It is interesting to check this result for general von Neumann algebras.
\end{rem}

\section{\label{sec:open_questions}Open questions}

The results of \prettyref{sec:equivalent_conditions} may open the
door to solving several questions.

In his celebrated proof of Szemer\'edi's theorem, Furstenberg \citep{Furstenberg__erg_behav}
established a multiple recurrence result that significantly extended
the Poincar\'{e} recurrence theorem, and led to a breadth of works
on related convergence questions. Weak mixing was a key idea in Furstenberg's
paper. Beyers, Duvenhage and Str{\"o}h \citep{Beyers_Duvenhage_Stroh__Szemeredi}
and Austin, Eisner and Tao \citep{Austin_Eisner_Tao_noncomm_erg_av}
considered the noncommutative case, namely in which the object that
is acted on is a finite von Neumann algebra, obtaining very interesting
partial results.
\begin{question}
\label{ques:Furstenberg}Is it possible to generalize results of \citep{Beyers_Duvenhage_Stroh__Szemeredi,Austin_Eisner_Tao_noncomm_erg_av},
and in particular \citep[Theorem 1.17]{Austin_Eisner_Tao_noncomm_erg_av},
to actions of LCQGs?
\end{question}
The main ingredients in the proof of \citep[Theorem 1.17]{Austin_Eisner_Tao_noncomm_erg_av}
include the von Neumann algebraic Jacobs--de Leeuw--Glicksberg theorem
of \citep{Niculescu_Stroh_Zsido__noncmt_recur} and the classical
van der Corput estimate. The starting point for answering \prettyref{ques:Furstenberg}
can be \prettyref{thm:Jacobs_de-Leeuw_Glicksberg} and a possible
generalization of the van der Corput estimate (for locally compact
groups, this was done in \citep{Beyers_Duvenhage_Stroh__Szemeredi}).
Note that even the case of groups other than $\Z$, or actions on
infinite von Neumann algebras, is still unknown. 

Two close questions concern formal strengthenings of the definition
of property (T) for LCQGs, whose statements or proofs are related
to weak mixing. In both, some separability assumption needs to be
made. Property (T) for discrete quantum groups was introduced by Fima
\citep{Fima__prop_T} and studied further by Kyed \citep{Kyed__cohom_prop_T_QG}.
For general LCQGs, see \citep[Section 6]{Daws_Fima_Skalski_White_Haagerup_LCQG}
and \citep{Chen_Ng__prop_T_LCQGs}.
\begin{question}
Does the Connes--Weiss theorem \citep{Connes_Weiss}, \citep[Theorem 6.3.4]{Bekka_de_la_Harpe_Valette__book},
characterizing property (T) in terms of strong ergodicity of measure-preserving
ergodic actions, generalize to LCQGs?
\end{question}
Generalizing the original proof, for example, would require a modification
of \citep[Proposition 3.2, (T2)]{Chen_Ng__prop_T_LCQGs} combined
with the construction of Vaes \citep[Proposition 3.1]{Vaes_strict_out_act}
that produces, from a unitary co-representation ``arising from an
orthogonal co-representation'', an action on a free Araki--Woods
factor preserving the free quasi-free state.
\begin{question}
Does the Bekka--Valette characterization \citep[Theorem 1]{Bekka_Valette__prop_T_amen_rep},
\citep[Theorem 2.12.9]{Bekka_de_la_Harpe_Valette__book} generalize
to LCQGs? That is, are the following conditions equivalent for a LCQG
$\G$?
\begin{enumerate}
\item $\G$ has property (T);
\item every unitary co-representation of $\G$ that has almost-invariant
vectors is not (strictly) weakly mixing.
\end{enumerate}
\end{question}
In \citep{Okayasu_Ozawa_Tomatsu_Haagerup_via_bimod}, Okayasu, Ozawa
and Tomatsu introduce strict mixing of bimodules (correspondences)
over von Neumann algebras. To every unitary co-representation of a
LCQG $\G$ they associate an $\Linfty{\hat{\G}}$-$\Linfty{\hat{\G}}$
bimodule, and vice versa, and establish that mixing of an element
of either of these classes implies mixing of the associated element
of the other \citep[Proposition 14]{Okayasu_Ozawa_Tomatsu_Haagerup_via_bimod}.
\begin{question}
Can one introduce a notion of weak mixing of bimodules over von Neumann
algebras (see, e.g., Peterson and Sinclair \citep[Definition 2.3]{Peterson_Sinclair__cocyc_superrig_Gauss_act})
that would be consistent as above with that of weak mixing for unitary
co-representations of LCQGs? 
\end{question}
\appendix

\section{Auxiliary results}

The following simple results, which are probably known, are used in
the paper. For a lucid account of unbounded operators on Hilbert spaces,
see the classic of Dunford and Schwartz \citep[Chapter XII]{DS2}.
Recall that over a given Hilbert space, a bounded operator $b$ is
said to \emph{commute} with an unbounded normal operator $n$ when
$bn\subseteq nb$; equivalently, when $b$ commutes with all spectral
projections of $n$ (see Fuglede \citep{Fuglede__comm_thm}).
\begin{lem}
\label{lem:action_with_invariant_state_J}Let $\a$ be an action of
a LCQG $\G$ on a von Neumann algebra $N$ that preserves a faithful
normal state $\theta$ of $N$. Then the unitary implementation $U\in B(\Ltwo{N,\theta})\tensorn\Linfty{\G}$
of $\a$ satisfies 
\[
(J_{\theta}\tensor\hat{J})U(J_{\theta}\tensor\hat{J})=U^{*}.
\]
\end{lem}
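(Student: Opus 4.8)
The plan is to verify the identity on matrix coefficients. Since $\Lambda_\theta$ maps the $\sigma^\theta$-analytic elements of $N$ onto a dense subspace of $\Ltwo{N,\theta}$ and $U$ is bounded, it suffices to show that
\[
\bigl(\om_{\Lambda_\theta(a),\Lambda_\theta(b)}\tensor\i\bigr)\bigl[(J_\theta\tensor\hat J)\,U\,(J_\theta\tensor\hat J)\bigr]=\bigl(\om_{\Lambda_\theta(a),\Lambda_\theta(b)}\tensor\i\bigr)(U^{*})
\]
for all $a,b\in N$ analytic for $\sigma^\theta$.

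First I would unwind the left-hand side. As $J_\theta\tensor\hat J$ is an anti-unitary involution on $\Ltwo{N,\theta}\tensor\Ltwo\G$, a direct inner-product computation gives, for all $\z,\eta\in\Ltwo{N,\theta}$,
\[
\bigl(\om_{\z,\eta}\tensor\i\bigr)\bigl[(J_\theta\tensor\hat J)\,U\,(J_\theta\tensor\hat J)\bigr]=\hat J\,\bigl(\om_{J_\theta\z,J_\theta\eta}\tensor\i\bigr)(U)\,\hat J=R\Bigl[\bigl(\om_{J_\theta\z,J_\theta\eta}\tensor\i\bigr)(U)^{*}\Bigr],
\]
using $R(x)=\hat J x^{*}\hat J$ for $x\in\Linfty\G$. (Equivalently $(J_\theta\tensor\hat J)\,U\,(J_\theta\tensor\hat J)=(\overline{U})^{*}$ when $\overline{U}$ is formed with $\J:=J_\theta$, so the lemma is exactly the assertion $\overline{U}=U$, which by \prettyref{lem:U'_left_leg} amounts to $\bigl(\om_{J_\theta\z,J_\theta\eta}\tensor\i\bigr)(U)=R\bigl[(\om_{\eta,\z}\tensor\i)(U)\bigr]$ for all $\z,\eta$.) Taking $\z:=\Lambda_\theta(a)$, $\eta:=\Lambda_\theta(b)$ and inserting the Tomita formula $J_\theta\Lambda_\theta(c)=\Lambda_\theta(\sigma^\theta_{-i/2}(c^{*}))$, valid for analytic $c$, together with the defining relation \eqref{eq:state_preserve_act_unitary_imp}, turns both sides of the required equality into slices $(\theta\tensor\i)(\cdot)$ of explicit elements of $N\tensorn\Linfty\G$ built from $\a(a)$, $\a(b)$ and the analytic generators $\sigma^\theta_{\pm i/2}$.

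Next I would reduce to a purely algebraic identity. Since $R$ is a normal $*$-anti-automorphism of $\Linfty\G$ one has $R\circ(\theta\tensor\i)=(\theta\tensor R)$, so the claim becomes an identity in $\Linfty\G$ between a $(\theta\tensor\i)$-slice and a $(\theta\tensor R)$-slice of such products. To collapse it one feeds in: the covariance of a state-preserving action with the modular and scaling groups, of the form $(\sigma^\theta_t\tensor\tau_{t})\circ\a=\a\circ\sigma^\theta_t$, which is part of Vaes's analysis of the unitary implementation and the dual weight; the $\sigma^\theta$-invariance and KMS property of $\theta$, which rewrite scalars such as $\theta(\sigma^\theta_{i/2}(b)\,\sigma^\theta_{-i/2}(c))$ as inner products $\langle\Lambda_\theta(c),\Lambda_\theta(b)\rangle$ via $J_\theta$; the commutation of $R$ with $\tau$ and the relations $S=R\tau_{-i/2}$, $S^{-1}=R\tau_{i/2}$; and the invariance $(\theta\tensor\i)\a=\theta(\cdot)\one$. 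With all elements in sight chosen analytic these are formal term-by-term manipulations, and a standard $\sigma$-weak approximation combined with the closedness of $\sigma^\theta_{-i/2}$ and $\tau_{\pm i/2}$ removes the analyticity restriction.

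The step I expect to be the main obstacle is this last one: pinning down the precise form of the modular covariance of $\a$ in the present conventions (a \emph{right} action and the \emph{left} Haar weight $\varphi$) and carrying the unbounded analytic generators through the computation so that the $(\theta\tensor\i)$- and $(\theta\tensor R)$-slices provably coincide. The remaining ingredients (density of analytic elements, normality of $R$, closability of the generators) are routine.
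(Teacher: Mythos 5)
Your reduction of the lemma to the single identity $(\om_{J_{\theta}\z,J_{\theta}\eta}\tensor\i)(U)=R\bigl[(\om_{\eta,\z}\tensor\i)(U)\bigr]$, i.e.\ to the assertion $\overline{U}=U$ for $\J:=J_{\theta}$, is correct, and your route is genuinely different from the paper's. The paper does not argue through the modular covariance of $\a$ at the slice level: it quotes two operator identities extracted from the proof of Theorem A.1 of \citep{Runde_Viselter_LCQGs_Ergodic_Thy}, namely the inclusion $(\om_{\Lambda_{\theta}(a),\Lambda_{\theta}(b)}\tensor\i)(U)\hat{\nabla}^{1/2}\hat{J}\subseteq\hat{\nabla}^{1/2}\hat{J}(\om_{\Lambda_{\theta}(a^{*}),\Lambda_{\theta}(\sigma_{i}^{\theta}(b)^{*})}\tensor\i)(U^{*})$ and the commutation of $U$ with $\nabla_{\theta}\tensor\hat{\nabla}^{-1}$, then cancels the injective operator $\hat{\nabla}^{1/2}$ and invokes $J_{\theta}\Lambda_{\theta}(c)=\Lambda_{\theta}(\sigma_{-i/2}^{\theta}(c^{*}))$ together with density of analytic elements. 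Your plan can be made to close, but two points need attention. First, the covariance relation you need is precisely $\a\circ\sigma_{t}^{\theta}=(\sigma_{t}^{\theta}\tensor\tau_{-t})\circ\a$, which is \emph{equivalent} to the commutation of $U$ with $\nabla_{\theta}\tensor\hat{\nabla}^{-1}$ and hence comes from the same source the paper relies on; the form you wrote, $(\sigma_{t}^{\theta}\tensor\tau_{t})\circ\a=\a\circ\sigma_{t}^{\theta}$, has the wrong sign on the scaling group, and carrying it through your outline produces $R\circ\tau_{-i}=R\circ S^{2}$ applied to $(\om\tensor\i)(U)$ instead of $R$, so the identity fails. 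With the correct sign, the continuation $\theta(\sigma_{i/2}^{\theta}(b)\,\sigma_{-i/2}^{\theta}(z))=\theta(zb)$ and the antipode relation $R\tau_{-i/2}\bigl[(\om\tensor\i)(U)\bigr]=(\om\tensor\i)(U^{*})$ combine to give exactly $R\bigl[(\om\tensor\i)(U)\bigr]$, as required. Second, the decisive computation is only announced, not performed, and you yourself flag it as the main obstacle; as it stands the proposal is a viable plan resting on an as-yet-unpinned covariance statement rather than a complete proof.
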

\begin{proof}
This is another by-product of the proof of \citep[Theorem A.1]{Runde_Viselter_LCQGs_Ergodic_Thy}
(see also the lines that precede it). Indeed, for $a\in D(\sigma_{-i}^{\theta})$
and $b\in D(\sigma_{i}^{\theta})$, we have 
\[
(\om_{\Lambda_{\theta}(a),\Lambda_{\theta}(b)}\tensor\i)(U)\hat{\nabla}^{1/2}\hat{J}\subseteq\hat{\nabla}^{1/2}\hat{J}(\om_{\Lambda_{\theta}(a^{*}),\Lambda_{\theta}(\sigma_{i}^{\theta}(b)^{*})}\tensor\i)(U^{*})
\]
(see \citep[equation succeeding (A.4) and (A.1), (A.2)]{Runde_Viselter_LCQGs_Ergodic_Thy};
we use $\hat{J},\hat{\nabla}$ in place of $I,L$). This was used
to prove that $U$ commutes with $\nabla_{\theta}\tensor\hat{\nabla}^{-1}$,
which implies that 
\[
\hat{\nabla}^{1/2}(\om_{\Lambda_{\theta}(\sigma_{-i/2}^{\theta}(a)),\Lambda_{\theta}(\sigma_{i/2}^{\theta}(b))}\tensor\i)(U)\hat{J}=\hat{\nabla}^{1/2}\hat{J}(\om_{\Lambda_{\theta}(a^{*}),\Lambda_{\theta}(\sigma_{i}^{\theta}(b)^{*})}\tensor\i)(U^{*})
\]
on $D(\hat{\nabla}^{1/2}\hat{J})=D(\hat{\nabla}^{-1/2})$. Since $\hat{\nabla}^{1/2}$
is strictly positive, we get 
\[
(\om_{\Lambda_{\theta}(\sigma_{-i/2}^{\theta}(a)),\Lambda_{\theta}(\sigma_{i/2}^{\theta}(b))}\tensor\i)(U)\hat{J}=\hat{J}(\om_{\Lambda_{\theta}(a^{*}),\Lambda_{\theta}(\sigma_{i}^{\theta}(b)^{*})}\tensor\i)(U^{*}).
\]
Recall that for every $c\in D(\sigma_{i/2}^{\theta})$, $J_{\theta}\Lambda_{\theta}(c)=J_{\theta}\nabla_{\theta}^{1/2}\nabla_{\theta}^{-1/2}\Lambda_{\theta}(c)=J_{\theta}\nabla_{\theta}^{1/2}\Lambda_{\theta}(\sigma_{i/2}^{\theta}(c))=\Lambda_{\theta}(\sigma_{-i/2}^{\theta}(c^{*}))$.
As a result,
\[
\hat{J}(\om_{J_{\theta}\Lambda_{\theta}(a^{*}),J_{\theta}\Lambda_{\theta}(\sigma_{i}^{\theta}(b)^{*})}\tensor\i)(U)\hat{J}=(\om_{\Lambda_{\theta}(a^{*}),\Lambda_{\theta}(\sigma_{i}^{\theta}(b)^{*})}\tensor\i)(U^{*}).
\]
The density of $\left\{ \Lambda_{\theta}(a^{*}):a\in D(\sigma_{-i}^{\theta})\right\} $
and $\left\{ \Lambda_{\theta}(\sigma_{i}^{\theta}(b)^{*}):b\in D(\sigma_{i}^{\theta})\right\} $
in $\Ltwo{N,\theta}$ gives the result.\end{proof}
\begin{lem}
\label{lem:abc}Let $\H_{1},\H_{2}$ be Hilbert spaces and $a,c\in B(\H_{1},\H_{2})$.
Then $a^{*}c,ac^{*}$ are selfadjoint, $\ker a\subseteq\ker c$ and
$\ker a^{*}=\ker c^{*}$ if and only if there exists a (generally
unbounded) injective selfadjoint operator $b$ over $\H_{2}$ commuting
with $aa^{*}$ such that $c=ba$.\end{lem}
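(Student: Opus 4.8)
The plan is to prove the two implications directly; the substance lies in manufacturing $b$ from the algebraic hypotheses.

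For the ``only if'' direction I would first observe that the hypotheses force $\ker a=\ker c$: if $cx=0$ then $a^{*}cx=0$, hence $c^{*}ax=0$ because $a^{*}c=c^{*}a$, so $ax\in\ker c^{*}=\ker a^{*}$; but $ax\in\overline{\operatorname{ran}a}=(\ker a^{*})^{\perp}$, whence $ax=0$. Writing $\K:=\overline{\operatorname{ran}a}$ and noting $\operatorname{ran}c\subseteq\overline{\operatorname{ran}c}=(\ker c^{*})^{\perp}=(\ker a^{*})^{\perp}=\K$, I would then define a linear operator $b_{0}$ on the dense subspace $\operatorname{ran}a$ of $\K$ by $b_{0}(ax):=cx$; it is well defined and injective because $\ker a=\ker c$, and symmetric because $\langle b_{0}(ax),ax'\rangle=\langle x,a^{*}cx'\rangle=\langle x,c^{*}ax'\rangle=\langle ax,b_{0}(ax')\rangle$, using $a^{*}c=c^{*}a$. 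The crux is to prove $b_{0}$ essentially selfadjoint on $\K$, and this is where the hypothesis $ac^{*}=ca^{*}$ (and its selfadjointness) enters. Since $b_{0}(aa^{*}z)=ca^{*}z$ for every $z\in\H_{2}$, a deficiency vector, say $y\in\K$ with $b_{0}^{*}y=iy$ ($b_{0}^{*}$ the adjoint taken in $\K$), would satisfy $\langle ca^{*}z,y\rangle=-i\langle aa^{*}z,y\rangle$ for all $z$, i.e.\ $ca^{*}y=-i\,aa^{*}y$ (using that $ca^{*}$ is selfadjoint); setting $\eta:=a^{*}y$ this reads $c\eta=-i\,a\eta$, so $\langle c\eta,a\eta\rangle=-i\|a\eta\|^{2}$, while $\langle c\eta,a\eta\rangle=\langle a^{*}c\eta,\eta\rangle=\langle c^{*}a\eta,\eta\rangle=\overline{\langle c\eta,a\eta\rangle}$ is real. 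Hence $a\eta=0$, so $aa^{*}y=0$ and $y\in\K\cap\K^{\perp}=\{0\}$; the case $b_{0}^{*}y=-iy$ is identical. Then $B_{0}:=\overline{b_{0}}$ is selfadjoint on $\K$, and injective because $\operatorname{ran}b_{0}\supseteq\operatorname{ran}(ca^{*})$ is dense in $\K$ (the selfadjoint operator $ca^{*}=ac^{*}$ has kernel $\ker(aa^{*})$, which meets $\K$ only in $0$). Finally I would take $B:=B_{0}\oplus I$ on $\H_{2}=\K\oplus\K^{\perp}$: it is injective and selfadjoint, it satisfies $c=Ba$ because $\operatorname{ran}a\subseteq D(b_{0})$ with $B(ax)=cx$, and it commutes with $aa^{*}=(aa^{*}|_{\K})\oplus0$ because on $\K$ one has $B_{0}(aa^{*}|_{\K})=ca^{*}|_{\K}$, a bounded selfadjoint operator, whence $(aa^{*}|_{\K})B_{0}\subseteq\bigl(B_{0}(aa^{*}|_{\K})\bigr)^{*}=ca^{*}|_{\K}=B_{0}(aa^{*}|_{\K})$.

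For the ``if'' direction, assume $c=ba$ with $b$ injective selfadjoint commuting with $aa^{*}$. Then $\ker a\subseteq\ker c$ is immediate. Since $b$ commutes with $aa^{*}$ it commutes with its spectral projections, hence reduces the decomposition $\H_{2}=\K\oplus\K^{\perp}$ with $\K=\overline{\operatorname{ran}a}=\overline{\operatorname{ran}(aa^{*})}$, and $\operatorname{ran}(aa^{*})\subseteq\operatorname{ran}a\subseteq D(b)$. The operator $D:=baa^{*}$ is then everywhere defined and closed, hence bounded, and selfadjoint (it is the limit, on each vector, of the bounded selfadjoint operators obtained by truncating $b$); consequently $ac^{*}=(ca^{*})^{*}=(baa^{*})^{*}=D=ca^{*}$, and $\langle a^{*}cx,x'\rangle=\langle b(ax),ax'\rangle=\langle ax,b(ax')\rangle=\langle x,a^{*}cx'\rangle$ shows $a^{*}c$ is selfadjoint. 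For the kernels, $\operatorname{ran}c=b(\operatorname{ran}a)\subseteq\K$ gives $\K^{\perp}\subseteq\ker c^{*}$, while if $c^{*}y=0$ then $\langle Dz,y\rangle=\langle ca^{*}z,y\rangle=0$ for all $z$, so $Dy=b(aa^{*}y)=0$, hence $aa^{*}y=0$ by injectivity of $b$, i.e.\ $y\in\ker a^{*}=\K^{\perp}$; thus $\ker c^{*}=\ker a^{*}$.

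I expect the essential selfadjointness of $b_{0}$ to be the main obstacle: $b_{0}$ is only manifestly a symmetric injective operator, and such an operator need not admit an injective selfadjoint extension in general; the computation above shows that the two selfadjointness hypotheses, on $a^{*}c$ and on $ac^{*}$, are exactly what annihilates the deficiency subspaces of $b_{0}$. In the converse direction the commutation with $aa^{*}$ plays the analogous indispensable role, without which the equality $\ker c^{*}=\ker a^{*}$ can fail.
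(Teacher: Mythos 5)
Your proof is correct and follows essentially the same route as the paper's: define $b_{0}(a\zeta):=c\zeta$ on $\operatorname{ran}a$, get symmetry from the selfadjointness of $a^{*}c$, kill the deficiency subspaces using the selfadjointness of $ac^{*}$, and extend by the identity on $\K^{\perp}$. The minor variations (your deficiency computation via $\eta=a^{*}y$ rather than via $c^{*}\eta=a^{*}b_{0}^{*}\eta$, and your adjoint trick for the commutation with $aa^{*}$) are equivalent to the paper's calculations.
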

\begin{proof}
Define $b(a\z):=c\z$, $\z\in\H_{1}$. By the selfadjointness of $a^{*}c$,
the (generally unbounded) operator $b$ is symmetric, thus closable,
over the Hilbert space $\K:=\overline{\Img a}=\overline{\Img c}\subseteq\H_{2}$.
As $a^{*}c,ac^{*}$ are selfadjoint, one calculates that $aa^{*}b\subseteq baa^{*}$,
and so $aa^{*}\overline{b}\subseteq\overline{b}aa^{*}$. If $\eta\in D(b^{*})$,
then for all $\z\in\H_{1}$, $\left\langle c\z,\eta\right\rangle =\left\langle ba\z,\eta\right\rangle =\left\langle \z,a^{*}b^{*}\eta\right\rangle $,
so that $c^{*}\eta=a^{*}b^{*}\eta$. If now $b^{*}\eta=\pm i\eta$,
then $\left\Vert c^{*}\eta\right\Vert ^{2}=\mp i\left\langle c^{*}\eta,a^{*}\eta\right\rangle =\mp i\left\langle ac^{*}\eta,\eta\right\rangle \in i\R$
as $ac^{*}$ is selfadjoint. Hence $\eta\in\ker c^{*}$, so that $\eta=0$
as $\eta\in\K$. Therefore, $b$ is essentially selfadjoint by von
Neumann's decomposition of $D(b^{*})$ \citep[Lemma XII.4.10]{DS2}.
Since the range of $b$ is dense in $\K$, its closure $\overline{b}$
is injective. The desired $b$ can now be defined as $\overline{b}$
on $\K$ and $\one$ on $\K^{\perp}$.

Conversely, if such $b$ exists, then clearly $\ker a=\ker c$ and
$a^{*}c$ is selfadjoint. We have $\overline{\Img c}=\overline{\Img ba}\supseteq\overline{\Img baa^{*}}=\overline{\Img aa^{*}b}=\overline{\Img aa^{*}}=\overline{\Img a}$
since $b$ commutes with $aa^{*}$ and is injective. Also, for the
same reasons, for each $\z\in\H_{1}$ there exists a sequence $\left(\eta_{n}\right)$
in $D(b)$ such that $ca^{*}\eta_{n}=baa^{*}\eta_{n}=aa^{*}b\eta_{n}\to a\z$,
implying that $\overline{\Img a}\subseteq\overline{\Img c}$. Thus
$\ker c^{*}=\ker a^{*}$. Furthermore, $ca^{*}=baa^{*}$ is selfadjoint
as $aa^{*},b$ are commuting selfadjoint operators.\end{proof}
\begin{lem}
Let $T$ be a closed, densely-defined operator on a Hilbert space
$\H$. If $\left(\eta_{\iota}\right)$ is a net in $D(T)$ and $\eta\in\H$
are such that $\eta_{\iota}\to\eta$ weakly in $\H$ and $\left(T\eta_{\iota}\right)$
is bounded by $C$, then $\eta\in D(T)$ and $\left\Vert T\eta\right\Vert \leq C$.\end{lem}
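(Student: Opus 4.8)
The plan is to combine weak compactness of bounded subsets of $\H$ with the fact that the graph of a closed operator is weakly closed.

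First I would observe that the net $\left(T\eta_\iota\right)$ lies in the closed ball of radius $C$ in $\H$, which is weakly compact by the Banach--Alaoglu theorem. Hence there is a subnet $\left(\eta_{\iota'}\right)$ such that $T\eta_{\iota'}\to\xi$ weakly for some $\xi\in\H$; by weak lower semicontinuity of the norm, $\left\Vert\xi\right\Vert\le C$. Passing to the same subnet keeps $\eta_{\iota'}\to\eta$ weakly, so the pairs $\left(\eta_{\iota'},T\eta_{\iota'}\right)$ converge weakly to $\left(\eta,\xi\right)$ in the Hilbert space $\H\oplus\H$.

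Next, since $T$ is closed, its graph $G(T)=\{(\zeta,T\zeta):\zeta\in D(T)\}$ is a norm-closed linear subspace of $\H\oplus\H$; being convex and norm-closed, it is weakly closed. Each pair $\left(\eta_{\iota'},T\eta_{\iota'}\right)$ belongs to $G(T)$ and the subnet converges weakly to $\left(\eta,\xi\right)$, so $\left(\eta,\xi\right)\in G(T)$, that is, $\eta\in D(T)$ and $T\eta=\xi$; in particular $\left\Vert T\eta\right\Vert=\left\Vert\xi\right\Vert\le C$. Equivalently, one may avoid graphs and argue through the adjoint: once $T\eta_{\iota'}\to\xi$ weakly, for every $\zeta\in D(T^{*})$ we get $\left\langle\eta,T^{*}\zeta\right\rangle=\lim_{\iota'}\left\langle\eta_{\iota'},T^{*}\zeta\right\rangle=\lim_{\iota'}\left\langle T\eta_{\iota'},\zeta\right\rangle=\left\langle\xi,\zeta\right\rangle$, whence $\eta\in D(T^{**})=D(T)$ with $T\eta=\xi$, using that $T=T^{**}$ for a closed, densely-defined $T$.

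The one delicate point — and essentially the only reason the statement must be handled with some care for nets rather than sequences — is that a bounded net need not have a weakly convergent subsequence, only a weakly convergent subnet; this is precisely what weak compactness of the closed ball of $\H$ supplies. Everything else is routine.
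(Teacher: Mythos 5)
Your proof is correct. Your primary route (Banach--Alaoglu to extract a weak limit point $\xi$ of $\left(T\eta_{\iota}\right)$, then weak closedness of the norm-closed convex graph $G(T)$) differs from the paper's, which uses no compactness at all: the paper simply notes that for $\z\in D(T^{*})$ the limit $\left\langle T^{*}\z,\eta\right\rangle =\lim_{\iota}\left\langle \z,T\eta_{\iota}\right\rangle$ exists by the weak convergence $\eta_{\iota}\to\eta$ alone and is bounded in modulus by $C\left\Vert \z\right\Vert$, so that $\eta\in D(T^{**})=D(T)$ with $\left\Vert T\eta\right\Vert \leq C$ by Riesz representation. Your closing ``adjoint'' variant is essentially this argument, except that you still first extract the subnet to produce $\xi$, whereas the paper's version manufactures $T\eta$ directly from the bounded conjugate-linear functional and hence never needs a subnet; this is why your remark that the net/sequence distinction is ``the one delicate point'' is a feature of your approach rather than of the statement itself. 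What your graph argument buys is a more geometric picture and no explicit appeal to $T=T^{**}$; what the paper's buys is brevity and the avoidance of weak compactness. Both are complete proofs.
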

\begin{proof}
For every $\z\in D(T^{*})$, $\left\langle T^{*}\z,\eta\right\rangle =\lim_{\iota}\left\langle \z,T\eta_{\iota}\right\rangle $
has absolute value at most $C\left\Vert \z\right\Vert $. Since $T=T^{**}$,
we infer that $\eta\in D(T)$ and $\left\Vert T\eta\right\Vert \leq C$.\end{proof}
\begin{lem}
\label{lem:action_conv_bdd}Let $\tau$ be an action of $\R$ on a
von Neumann algebra $M$ and let $z\in\C$. If a net $\left(a_{\iota}\right)$
in $D(\tau_{z})$ converges weakly to $a\in M$ and $\left(\tau_{z}(a_{\iota})\right)$
is bounded by $C$, then $a\in D(\tau_{z})$ and $\left\Vert \tau_{z}(a)\right\Vert \leq C$.\end{lem}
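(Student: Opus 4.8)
The plan is to deduce the statement from the fact that the analytic generator $\tau_z$ is closed for the $\sigma$-weak topology, which is a manifestation of its being a Banach-space adjoint. First I would bring in the pre-adjoint group $\sigma_t:=(\tau_t)_*$ on the predual $M_*$: it is a one-parameter group of isometries, and it is norm-continuous, since its adjoint group $(\sigma_t)^*=\tau$ is the given $\sigma$-weakly continuous group on $M=(M_*)^*$ (a $\sigma$-weakly continuous action of $\R$ on a von Neumann algebra restricts to a point-norm-continuous one on the predual). Write $\sigma_z$ for the analytic generator of $(\sigma_t)_{t\in\R}$ at $z$; its domain contains all entire elements, hence is $\|\cdot\|$-dense in $M_*$. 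The input I would invoke is the duality theorem for analytic generators of Cioranescu--Zsid\'o \citep{Cioranescu_Zsido__analytic_gen} (see also \citep{Zsido__spectral_erg_prop}): the analytic generator of the adjoint group $(\sigma_t)^*=\tau$ is precisely the Banach-space adjoint of $\sigma_z$, that is, $\tau_z=(\sigma_z)^*$ as operators on $M=(M_*)^*$. In particular $\mathrm{graph}(\tau_z)=\{(x,y)\in M\times M:\langle y,\omega\rangle=\langle x,\sigma_z\omega\rangle\text{ for all }\omega\in D(\sigma_z)\}$, so the graph of $\tau_z$ is closed for the product $\sigma(M,M_*)$-topology, being an intersection of kernels of $\sigma$-weakly continuous functionals.

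Granting this, I would argue in the style of the preceding lemma. Fix $\omega\in D(\sigma_z)$. For every $\iota$ one has $a_\iota\in D(\tau_z)=D((\sigma_z)^*)$, hence $\langle\tau_z(a_\iota),\omega\rangle=\langle a_\iota,\sigma_z\omega\rangle$, and therefore $|\langle a_\iota,\sigma_z\omega\rangle|=|\langle\tau_z(a_\iota),\omega\rangle|\le\|\tau_z(a_\iota)\|\,\|\omega\|\le C\|\omega\|$. Letting $\iota\to\infty$ — legitimate because $\sigma_z\omega$ is a fixed element of $M_*$ and $a_\iota\to a$ $\sigma$-weakly — gives $|\langle a,\sigma_z\omega\rangle|\le C\|\omega\|$ for all $\omega\in D(\sigma_z)$. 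Since $D(\sigma_z)$ is dense in $M_*$, the functional $\omega\mapsto\langle a,\sigma_z\omega\rangle$ extends to some $y\in(M_*)^*=M$ with $\|y\|\le C$; by the definition of the adjoint this is exactly the statement that $a\in D((\sigma_z)^*)=D(\tau_z)$ with $\tau_z(a)=y$, whence $\|\tau_z(a)\|\le C$. Equivalently, and perhaps more transparently, one may pass to a subnet along which $\tau_z(a_\iota)$ converges $\sigma$-weakly (Banach--Alaoglu in $M=(M_*)^*$) to some $b$ with $\|b\|\le C$, and then the $\sigma$-weak closedness of $\tau_z$ yields $a\in D(\tau_z)$ and $\tau_z(a)=b$.

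When $z\in\R$ the statement is trivial: $\tau_z$ is then a $\sigma$-weakly continuous automorphism, so $\tau_z(a_\iota)\to\tau_z(a)$ $\sigma$-weakly and the norm is $\sigma$-weakly lower semicontinuous; nothing above is needed. Thus the whole content is the case $z\notin\R$, and there the only non-formal ingredient is the identification $\tau_z=(\sigma_z)^*$ — equivalently, the $\sigma$-weak closedness of the analytic generator. I expect this to be the step requiring care, but it is a standard feature of analytic generators, so once it is cited the remainder of the argument is routine bookkeeping.
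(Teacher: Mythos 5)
Your argument is correct, but it takes a genuinely different route from the paper. The paper stays on the Hilbert-space side: representing $M$ standardly, it invokes Haagerup's standard-form result and Stone's theorem to write $\tau_{t}=\Ad{T^{it}}$ for a strictly positive operator $T$, applies the immediately preceding lemma (a closed, densely defined operator on a Hilbert space is weakly closed on bounded sets) to the vectors $a_{\iota}T^{-iz}\z$, and concludes that $b:=\overline{T^{iz}aT^{-iz}}$ is a bounded element of $M$ realizing $\tau_{z}(a)$. You instead work on the Banach-space side, identifying $\tau_{z}$ as the adjoint of the analytic generator of the predual group via the Cioranescu--Zsid\'o duality theorem, so that its graph is $\sigma(M,M_{*})$-closed and the conclusion follows from the definition of the adjoint together with density of the predual domain. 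Both proofs are sound. The paper's version is essentially self-contained modulo the standard-form implementation and reuses the appendix lemma it has just proved; yours is arguably more conceptual and more general (it applies verbatim to any $w^{*}$-continuous isometry group on a dual Banach space whose predual group is strongly continuous), at the cost of importing the duality theorem as a black box. One small point to nail down if you write this up: the precise statement of the duality theorem may identify $\tau_{z}$ with the adjoint of the predual generator at $\bar{z}$ rather than at $z$, depending on how the predual group is parametrized; this does not affect your argument, since all you need is that $\tau_{z}$ is the adjoint of \emph{some} densely defined operator on $M_{*}$, but the indexing should be quoted correctly.
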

\begin{proof}
Representing $M$ standardly on a Hilbert space $\H$, there exists
a (generally unbounded) strictly positive operator $T$ on $\H$ such
that $\tau_{t}=\Ad{T^{it}}$ for all $t\in\R$ (by \citep[Corollary 3.6]{Haagerup__standard_form}
and Stone's theorem). Let $\z\in D(T^{-iz})$. For every $\iota$,
let $\eta_{\iota}:=a_{\iota}T^{-iz}\z=T^{-iz}\tau_{z}(a_{\iota})\z$.
Then $\eta_{\iota}\to aT^{-iz}\z$ weakly and the net $(T^{iz}\eta_{\iota})=(\tau_{z}(a_{\iota})\z)$
is bounded by $C\left\Vert \z\right\Vert $. By the previous lemma,
$aT^{-iz}\z\in D(T^{iz})$ and $\left\Vert T^{iz}aT^{-iz}\z\right\Vert \leq C\left\Vert \z\right\Vert $.
Consequently, $b:=\overline{T^{iz}aT^{-iz}}$ satisfies $\left\Vert b\right\Vert \leq C$.
It is standard that $b\in M$. For every $\z\in D(T^{-iz})$, we have
$aT^{-iz}\z=T^{-iz}T^{iz}aT^{-iz}\z=T^{-iz}b\z$, whence $a\in D(\tau_{z})$
and $\tau_{z}(a)=b$.
\end{proof}

\section*{Acknowledgments}

We thank Xiao Chen, Adam Skalski, Piotr M.~So{\ldash}tan and Nicolaas
Spronk for their comments and suggestions. In particular, we are grateful
to PMS for drawing our attention to relevant parts of his paper \citep{Soltan__quantum_Bohr_comp},
and to NS for suggesting the connection with \citep{Das_Daws__quantum_Eberlein}.
We are indebted to the referee of a previous version of this paper
for spotting a critical mistake, and to the referee of this version
for his/her suggestions.

\bibliographystyle{amsplain}
\bibliography{LCQG_weak_mixing}

\providecommand{\bysame}{\leavevmode\hbox to3em{\hrulefill}\thinspace}
\providecommand{\MR}{\relax\ifhmode\unskip\space\fi MR }
% \MRhref is called by the amsart/book/proc definition of \MR.
\providecommand{\MRhref}[2]{%
  \href{http://www.ams.org/mathscinet-getitem?mr=#1}{#2}
}
\providecommand{\href}[2]{#2}
\begin{thebibliography}{10}

\bibitem{Austin__erg_thy_lect_notes}
T.~Austin, \emph{Lecture notes on ergodic theory},
  \url{http://cims.nyu.edu/~tim/et2013.html}.

\bibitem{Austin_Eisner_Tao_noncomm_erg_av}
T.~Austin, T.~Eisner, and T.~Tao, \emph{Nonconventional ergodic averages and
  multiple recurrence for von {N}eumann dynamical systems}, Pacific J. Math.
  \textbf{250} (2011), no.~1, 1--60.

\bibitem{Bedos_Tuset_2003}
E.~B{\'e}dos and L.~Tuset, \emph{Amenability and co-amenability for locally
  compact quantum groups}, Internat. J. Math. \textbf{14} (2003), no.~8,
  865--884.

\bibitem{Bekka_de_la_Harpe_Valette__book}
B.~Bekka, P.~de~la Harpe, and A.~Valette, \emph{Kazhdan's property ({T})}, New
  Mathematical Monographs, vol.~11, Cambridge University Press, Cambridge,
  2008.

\bibitem{Bekka_Valette__prop_T_amen_rep}
M.~E.~B. Bekka and A.~Valette, \emph{Kazhdan's property {$({\rm T})$} and
  amenable representations}, Math. Z. \textbf{212} (1993), no.~2, 293--299.

\bibitem{Bergelson_Gorodnik__survey}
V.~Bergelson and A.~Gorodnik, \emph{Weakly mixing group actions: a brief survey
  and an example}, Modern dynamical systems and applications, Cambridge Univ.
  Press, Cambridge, 2004, pp.~3--25.

\bibitem{Bergelson_Rosenblatt__mixing}
V.~Bergelson and J.~Rosenblatt, \emph{Mixing actions of groups}, Illinois J.
  Math. \textbf{32} (1988), no.~1, 65--80.

\bibitem{Beyers_Duvenhage_Stroh__Szemeredi}
C.~Beyers, R.~Duvenhage, and A.~Str{\"o}h, \emph{The {S}zemer\'edi property in
  ergodic {$W^*$}-dynamical systems}, J. Operator Theory \textbf{64} (2010),
  no.~1, 35--67.

\bibitem{Cameron_Fang__Mukherjee__mix_subalg}
J.~Cameron, J.~Fang, and K.~Mukherjee, \emph{Mixing subalgebras of finite von
  {N}eumann algebras}, New York J. Math. \textbf{19} (2013), 343--366.

\bibitem{Chen_Ng__prop_T_LCQGs}
X.~Chen and C.-K. Ng, \emph{Property {T} for locally compact quantum groups},
  Internat. J. Math. \textbf{26} (2015), no.~3, 1550024, 13 pp.

\bibitem{Cioranescu_Zsido__analytic_gen}
I.~Cior{\u{a}}nescu and L.~Zsid{\'o}, \emph{Analytic generators for
  one-parameter groups}, T\^ohoku Math. J. (2) \textbf{28} (1976), no.~3,
  327--362.

\bibitem{Connes_Weiss}
A.~Connes and B.~Weiss, \emph{Property {${\rm T}$} and asymptotically invariant
  sequences}, Israel J. Math. \textbf{37} (1980), no.~3, 209--210.

\bibitem{Das_Daws__quantum_Eberlein}
B.~Das and M.~Daws, \emph{Quantum {E}berlein compactifications and invariant
  means}, Indiana Univ. Math. J., to appear, arXiv:1406.1109v2.

\bibitem{Daws__remarks_quantum_Bohr_comp}
M.~Daws, \emph{Remarks on the quantum {B}ohr compactification}, Illinois J.
  Math. \textbf{57} (2013), no.~4, 1131--1171.

\bibitem{Daws_Fima_Skalski_White_Haagerup_LCQG}
M.~Daws, P.~Fima, A.~Skalski, and S.~White, \emph{The {H}aagerup property for
  locally compact quantum groups}, J. Reine Angew. Math., to appear,
  \doi{10.1515/crelle-2013-0113}.

\bibitem{Daws_Kasprzak_Skalski_Soltan__closed_q_subgroups_LCQGs}
M.~Daws, P.~Kasprzak, A.~Skalski, and P.~M. So{\ldash}tan, \emph{Closed quantum
  subgroups of locally compact quantum groups}, Adv. Math. \textbf{231} (2012),
  no.~6, 3473--3501.

\bibitem{deLeeuw_Glicksberg__appl_almst_per_com}
K.~de~Leeuw and I.~Glicksberg, \emph{Applications of almost periodic
  compactifications}, Acta Math. \textbf{105} (1961), 63--97.

\bibitem{DS2}
N.~Dunford and J.~T. Schwartz, \emph{Linear operators. {P}art {II}. {S}pectral
  theory}, Wiley Classics Library, John Wiley \& Sons Inc., New York, 1988.

\bibitem{Duvenhage__Bergelson_thm}
R.~Duvenhage, \emph{Bergelson's theorem for weakly mixing {$C^\ast$}-dynamical
  systems}, Studia Math. \textbf{192} (2009), no.~3, 235--257.

\bibitem{Duvenhage__ergodicity_and_mixing}
\bysame, \emph{Ergodicity and mixing of {W}*-dynamical systems in terms of
  joinings}, Illinois J. Math. \textbf{54} (2010), no.~2, 543--566.

\bibitem{Duvenhage__relatively_ind_join}
\bysame, \emph{Relatively independent joinings and subsystems of {$\rm
  W^*$}-dynamical systems}, Studia Math. \textbf{209} (2012), no.~1, 21--41.

\bibitem{Duvenhage_Mukhamedov__rel_erg_prop}
R.~Duvenhage and F.~Mukhamedov, \emph{Relative ergodic properties of {$\rm
  C^*$}-dynamical systems}, Infin. Dimens. Anal. Quantum Probab. Relat. Top.
  \textbf{17} (2014), no.~1, 1450005, 26 pp.

\bibitem{Dye__erg_mix_thm}
H.~A. Dye, \emph{On the ergodic mixing theorem}, Trans. Amer. Math. Soc.
  \textbf{118} (1965), 123--130.

\bibitem{Effros_Ruan__discrete_QGs}
E.~G. Effros and Z.-J. Ruan, \emph{Discrete quantum groups. {I}. {T}he {H}aar
  measure}, Internat. J. Math. \textbf{5} (1994), no.~5, 681--723.

\bibitem{Effros_Ruan__book}
\bysame, \emph{Operator spaces}, London Mathematical Society Monographs. New
  Series, vol.~23, Oxford University Press, 2000.

\bibitem{Enock_Schwartz__book}
M.~Enock and J.-M. Schwartz, \emph{Kac algebras and duality of locally compact
  groups}, Springer-Verlag, Berlin, 1992.

\bibitem{Fima__prop_T}
P.~Fima, \emph{Kazhdan's property {$T$} for discrete quantum groups}, Internat.
  J. Math. \textbf{21} (2010), no.~1, 47--65.

\bibitem{Fuglede__comm_thm}
B.~Fuglede, \emph{A commutativity theorem for normal operators}, Proc. Natl.
  Acad. Sci. USA \textbf{36} (1950), no.~1, 35--40.

\bibitem{Furstenberg__erg_behav}
H.~Furstenberg, \emph{Ergodic behavior of diagonal measures and a theorem of
  {S}zemer\'edi on arithmetic progressions}, J. Analyse Math. \textbf{31}
  (1977), 204--256.

\bibitem{Haagerup__standard_form}
U.~Haagerup, \emph{The standard form of von {N}eumann algebras}, Math. Scand.
  \textbf{37} (1975), no.~2, 271--283.

\bibitem{Jacobs_ergodic_thy_1956}
K.~Jacobs, \emph{Ergodentheorie und fastperiodische {F}unktionen auf
  {H}albgruppen}, Math. Z. \textbf{64} (1956), 298--338.

\bibitem{Jolissaint_Stalder__str_singular_MASAs}
P.~Jolissaint and Y.~Stalder, \emph{Strongly singular {MASA}s and mixing
  actions in finite von {N}eumann algebras}, Ergodic Theory Dynam. Systems
  \textbf{28} (2008), no.~6, 1861--1878.

\bibitem{Koopman_von_Neumann}
B.~O. Koopman and J.~v.~Neumann, \emph{Dynamical systems of continuous
  spectra}, Proc. Natl. Acad. Sci. USA \textbf{18} (1932), no.~3, 255--263.

\bibitem{Krengel__book}
U.~Krengel, \emph{Ergodic theorems}, de Gruyter Studies in Mathematics, vol.~6,
  Walter de Gruyter \& Co., Berlin, 1985.

\bibitem{Kustermans__LCQG_universal}
J.~Kustermans, \emph{Locally compact quantum groups in the universal setting},
  Internat. J. Math. \textbf{12} (2001), no.~3, 289--338.

\bibitem{Kustermans_Vaes__LCQG_C_star}
J.~Kustermans and S.~Vaes, \emph{Locally compact quantum groups}, Ann. Sci.
  \'Ecole Norm. Sup. (4) \textbf{33} (2000), no.~6, 837--934.

\bibitem{Kustermans_Vaes__LCQG_von_Neumann}
\bysame, \emph{Locally compact quantum groups in the von {N}eumann algebraic
  setting}, Math. Scand. \textbf{92} (2003), no.~1, 68--92.

\bibitem{Kyed__cohom_prop_T_QG}
D.~Kyed, \emph{A cohomological description of property ({T}) for quantum
  groups}, J. Funct. Anal. \textbf{261} (2011), no.~6, 1469--1493.

\bibitem{Kyed_Soltan__prop_T_exotic_QG_norms}
D.~Kyed and P.~M. So{\ldash}tan, \emph{Property ({T}) and exotic quantum group
  norms}, J. Noncommut. Geom. \textbf{6} (2012), no.~4, 773--800.

\bibitem{Maes_van_Daele__notes_CQGs}
A.~Maes and A.~{Van Daele}, \emph{Notes on compact quantum groups}, Nieuw Arch.
  Wisk. (4) \textbf{16} (1998), no.~1-2, 73--112.

\bibitem{Masuda_Nakagami_Woronowicz}
T.~Masuda, Y.~Nakagami, and S.~L. Woronowicz, \emph{A {$C^\ast$}-algebraic
  framework for quantum groups}, Internat. J. Math. \textbf{14} (2003), no.~9,
  903--1001.

\bibitem{Meyer_Roy_Woronowicz__hom_quant_grps}
R.~Meyer, S.~Roy, and S.~L. Woronowicz, \emph{Homomorphisms of quantum groups},
  M\"unster J. Math. \textbf{5} (2012), 1--24.

\bibitem{Niculescu_Stroh_Zsido__noncmt_recur}
C.~P. Niculescu, A.~Str{\"o}h, and L.~Zsid{\'o}, \emph{Noncommutative
  extensions of classical and multiple recurrence theorems}, J. Operator Theory
  \textbf{50} (2003), no.~1, 3--52.

\bibitem{Okayasu_Ozawa_Tomatsu_Haagerup_via_bimod}
R.~Okayasu, N.~Ozawa, and R.~Tomatsu, \emph{Haagerup approximation property via
  bimodules}, preprint, arXiv:1501.06293v2, 2015.

\bibitem{Peterson__erg_thy_lect_notes}
J.~Peterson, \emph{Lecture notes on ergodic theory},
  \url{http://www.math.vanderbilt.edu/~peters10/teaching/Spring2011/math390Blecturenotes.html}.

\bibitem{Peterson_Sinclair__cocyc_superrig_Gauss_act}
J.~Peterson and T.~Sinclair, \emph{On cocycle superrigidity for {G}aussian
  actions}, Ergodic Theory Dynam. Systems \textbf{32} (2012), no.~1, 249--272.

\bibitem{Popa__factors_Betti_num}
S.~Popa, \emph{On a class of type {${\rm II}_1$} factors with {B}etti numbers
  invariants}, Ann. of Math. (2) \textbf{163} (2006), no.~3, 809--899.

\bibitem{Popa__some_rig_Bernoulli}
\bysame, \emph{Some rigidity results for non-commutative {B}ernoulli shifts},
  J. Funct. Anal. \textbf{230} (2006), no.~2, 273--328.

\bibitem{Popa__strong_rig_malleable_I}
\bysame, \emph{Strong rigidity of {$\rm II_1$} factors arising from malleable
  actions of {$w$}-rigid groups. {I}}, Invent. Math. \textbf{165} (2006),
  no.~2, 369--408.

\bibitem{Popa__strong_rig_malleable_II}
\bysame, \emph{Strong rigidity of {$\rm II_1$} factors arising from malleable
  actions of {$w$}-rigid groups. {II}}, Invent. Math. \textbf{165} (2006),
  no.~2, 409--451.

\bibitem{Popa__cocycle_orbit_eq_superrig}
\bysame, \emph{Cocycle and orbit equivalence superrigidity for malleable
  actions of {$w$}-rigid groups}, Invent. Math. \textbf{170} (2007), no.~2,
  243--295.

\bibitem{Popa_Vaes__strong_rig_gen_Ber}
S.~Popa and S.~Vaes, \emph{Strong rigidity of generalized {B}ernoulli actions
  and computations of their symmetry groups}, Adv. Math. \textbf{217} (2008),
  no.~2, 833--872.

\bibitem{Robertson_Sinclair_Smith__strong_sing}
G.~Robertson, A.~M. Sinclair, and R.~R. Smith, \emph{Strong singularity for
  subalgebras of finite factors}, Internat. J. Math. \textbf{14} (2003), no.~3,
  235--258.

\bibitem{Runde__charac_compact_discr_QG}
V.~Runde, \emph{Characterizations of compact and discrete quantum groups
  through second duals}, J. Operator Theory \textbf{60} (2008), no.~2,
  415--428.

\bibitem{Runde_Viselter_LCQGs_Ergodic_Thy}
V.~Runde and A.~Viselter, \emph{Ergodic theory for quantum semigroups}, J.
  Lond. Math. Soc. (2) \textbf{89} (2014), no.~3, 941--959.

\bibitem{Sinclair_Smith__strong_singular}
A.~M. Sinclair and R.~R. Smith, \emph{Strongly singular masas in type {$\rm
  II_1$} factors}, Geom. Funct. Anal. \textbf{12} (2002), no.~1, 199--216.

\bibitem{Sinclair_Smith_White_Wiggins__strong_singular_MASAs}
A.~M. Sinclair, R.~R. Smith, S.~A. White, and A.~Wiggins, \emph{Strong
  singularity of singular masas in {${\rm II}_1$} factors}, Illinois J. Math.
  \textbf{51} (2007), no.~4, 1077--1084.

\bibitem{Soltan__quantum_Bohr_comp}
P.~M. So{\ldash}tan, \emph{Quantum {B}ohr compactification}, Illinois J. Math.
  \textbf{49} (2005), no.~4, 1245--1270.

\bibitem{Stratila__mod_thy}
{\c{S}}.~Str{\u{a}}til{\u{a}}, \emph{Modular theory in operator algebras},
  Abacus Press, Tunbridge Wells, England, 1981.

\bibitem{Takesaki__book_vol_2}
M.~Takesaki, \emph{Theory of operator algebras. {II}}, Encyclopaedia of
  Mathematical Sciences, vol. 125, Springer-Verlag, Berlin, 2003.

\bibitem{Vaes__unit_impl_LCQG}
S.~Vaes, \emph{The unitary implementation of a locally compact quantum group
  action}, J. Funct. Anal. \textbf{180} (2001), no.~2, 426--480.

\bibitem{Vaes_strict_out_act}
\bysame, \emph{Strictly outer actions of groups and quantum groups}, J. Reine
  Angew. Math. \textbf{578} (2005), 147--184.

\bibitem{Van_Daele__discrete_CQs}
A.~{Van Daele}, \emph{Discrete quantum groups}, J. Algebra \textbf{180} (1996),
  no.~2, 431--444.

\bibitem{Woronowicz__remark_on_CQGs}
S.~L. Woronowicz, \emph{A remark on compact matrix quantum groups}, Lett. Math.
  Phys. \textbf{21} (1991), no.~1, 35--39.

\bibitem{Woronowicz__symetries_quantiques}
\bysame, \emph{Compact quantum groups}, Sym\'etries quantiques ({L}es
  {H}ouches, 1995), North-Holland, Amsterdam, 1998, pp.~845--884.

\bibitem{Zsido__spectral_erg_prop}
L.~Zsid{\'o}, \emph{Spectral and ergodic properties of the analytic
  generators}, J. Approximation Theory \textbf{20} (1977), no.~1, 77--138.

\bibitem{Zsido__splitting_noncomm_dyn_sys}
\bysame, \emph{Splitting noncommutative dynamical systems in almost periodic
  and weakly mixing parts}, presented in ``International Conference on Operator
  Theory'', Timi\c{s}oara, Romania, 2012.

\end{thebibliography}

\end{document}